\newcommand{\N}{\mathbb{N}}
\newcommand{\R}{\mathbb{R}}
\newcommand{\M}{\mathbb{M}}
\newcommand{\p}{\mathcal{P}}
\newcommand{\A}{\mathcal{A}}
\newcommand{\ie}{\textit{i.e}}
\newcommand{\liminfty}{\lim\limits_{n \to +\infty}}
\newcommand{\cof}{\operatorname{cof}}
\newcommand{\Lip}{\operatorname{Lip}}
\newcommand{\supp}{\operatorname{Supp}}
\newcommand{\dH}{d_{\mathbb{H}}}
\newcommand{\J}{\operatorname{J}}
\newcommand{\nb}{\overline{n}}
\newcommand{\mb}{\overline{m}}
\newcommand{\Np}{\operatorname{N}_p}
\newcommand{\Pp}{\operatorname{P}_p}
\newcommand{\Tp}{\operatorname{T}_p}
\newcommand{\Ap}{\operatorname{A}_p}
\newcommand{\vspan}{\operatorname{span}}
\theoremstyle{plain}
\newtheorem{prop}{Proposition}[section]
\newtheorem{theorem}[prop]{Theorem}
\newtheorem{lemma}[prop]{Lemma}
\newtheorem{corollary}[prop]{Corollary}
\theoremstyle{definition}
\newtheorem{definition}[prop]{Definition}
\newtheorem{pb}{Problem}
\newtheorem*{thank}{Acknowledgments}
\theoremstyle{remark}
\newtheorem{rmk}[prop]{Remark}
\begin{document}

\allowdisplaybreaks

\title[Hamming graphs and concentration properties in Banach spaces]{Hamming graphs and concentration properties in non-quasi-reflexive Banach spaces}

\author{A.~Fovelle}
\address{A.~Fovelle, Laboratoire de Math\'ematiques de Besan\c con, Universit\'e Bourgogne Franche-Comt\'e, 16 route de Gray, 25030 Besan\c con C\'edex, Besan\c con, France}
\email{audrey.fovelle@univ-fcomte.fr}

\thanks{The LmB receives support from the EIPHI Graduate School (contract ANR-17-EURE-0002)}

\keywords{Banach spaces, Hamming graphs, Asymptotic structure, nonlinear embeddings, concentration properties}

\begin{abstract} In this note, we study some concentration properties for Lipschitz maps defined on Hamming graphs, as well as their stability under sums of Banach spaces. As an application, we extend a result of Causey on the coarse Lipschitz structure of quasi-reflexive spaces satisfying upper $\ell_p$ tree estimates to the setting of $\ell_p$-sums of such spaces. Our result provides us with a tool for constructing the first examples of Banach spaces that are not quasi-reflexive but nevertheless admit some concentration inequality. We also give a sufficient condition for a space to be asymptotic-$c_0$ in terms of a concentration property, as well as relevant counterexamples.
\end{abstract}

\maketitle

 \setcounter{tocdepth}{1}

\section{Introduction}

In 2008, in order to show that $L_p(0,1)$ is not uniformly homeomorphic to $\ell_p \oplus \ell_2$ for $p \in (1, \infty) \setminus \{2 \}$, Kalton and Randrianarivony \cite{KR} introduced a new technique based on a certain class of graphs and asymptotic smoothness ideas. To be more specific, they introduced a concentration property for Lipschitz maps defined on Hamming graphs into a reflexive asymptotically uniformly smooth (AUS) Banach space $X$ (we refer the reader to Section $2$ for the definitions), that prevents coarse embeddings of certain other spaces into $X$. Their result was used by Kalton himself to deduce some information about the spreading models of a space that coarse Lipschitz embeds into a reflexive AUS space (see \cite{MR2995383}), and  was later extended to the quasi-reflexive case by Lancien et Raja \cite{LR}, who introduced a weaker concentration property. Soon after, Causey \cite{3.5} proved that this same weaker concentration property also applies to quasi-reflexive spaces with so-called upper $\ell_p$ tree estimates.

The purpose of this paper is to start a general study of these concentration properties, together with new ones. In particular, we will address the question of their stability under sums of Banach spaces. This will allow us to get non-quasi-reflexive examples. \\
Our main result will be the following. 

\begin{theorem} \label{thm somme E}
Let $p \in (1, \infty)$, $\lambda > 0$, $(X_n)_{n \in \N}$ a sequence of Banach spaces with property $\lambda$-HIC$_{p,d}$. \\
Let $E$ be a reflexive Banach space with a normalized $1$-unconditional $p$-convex basis $(e_n)_{n \in \N}$ with convexity constant $1$. \\
Then $X= \left( \sum_{n \in \N} X_n \right)_E$ has property $(\lambda+2+\varepsilon)$-HIC$_{p,d}$ for every $\varepsilon > 0$.
\end{theorem}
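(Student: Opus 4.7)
The plan is to split $f$ into a head and a tail via the basis of $E$, stabilise the head using the $\lambda$-HIC$_{p,d}$ property of each summand, control the tail via the reflexivity of $X$, and combine the two estimates through the $p$-convexity of $(e_n)$. After rescaling, assume $f:([\N]^k,d_H)\to X$ is $1$-Lipschitz. Since $(e_n)$ is $1$-unconditional, each coordinate projection $P_n:X\to X_n$ has norm one, so $f_n:=P_n\circ f$ is $1$-Lipschitz into $X_n$; write $R_N:=\sum_{n\le N}P_n$ and $Q_N:=I-R_N$, both contractions on $X$.

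Fix $\varepsilon>0$ and an integer $N$ to be chosen later. I would iterate the $\lambda$-HIC$_{p,d}$ property for $X_1,\dots,X_N$ to obtain nested infinite subsets $\M_1\supseteq\cdots\supseteq\M_N\subseteq\N$ on which each $f_n$ ($n\le N$) simultaneously satisfies its concentration bound with constant $\lambda$. A further reflexivity-based extraction --- exploiting reflexivity of $X$ and a Ramsey argument along the Hamming graph --- refines $\M_N$ to an infinite $\M$ on which the tail $Q_Nf$ clusters around a single reference vector $y\in X$ with error at most $(1+\tfrac{\varepsilon}{2})k^{1/p}$, so that on the appropriate configurations (interlocked pairs or similar) $\bar n,\bar m\in[\M]^k$ one has $\|Q_N(f(\bar n)-f(\bar m))\|_X\le(2+\varepsilon)k^{1/p}$. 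For the head, the $p$-convexity of $(e_n)$ with convexity constant $1$, i.e., the upper $\ell_p$ lattice estimate, would combine the coordinate bounds $\|f_n(\bar n)-f_n(\bar m)\|_{X_n}\le\lambda k^{1/p}$ ($n\le N$) with the global Lipschitz constraint $\|R_N(f(\bar n)-f(\bar m))\|_X\le k$ to yield $\|R_N(f(\bar n)-f(\bar m))\|_X\le\lambda k^{1/p}$ with no $N$-dependence; this can be viewed as a finite-sum version of the main theorem, proved by induction on $N$ with $p$-convexity as the key ingredient. Adding head and tail via the triangle inequality gives the claimed $(\lambda+2+\varepsilon)k^{1/p}$ bound.

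The main obstacle is the tail step: producing a single infinite $\M$ on which $Q_Nf$ concentrates around a fixed vector up to an additive error $(2+\varepsilon)k^{1/p}$ requires a careful Ramsey-type extraction exploiting both the reflexivity of $E$ and the fact that the shrinking basis $(e_n)$ forces tail projections to be weakly null on norm-bounded sets. Once the tail is tamed, the head estimate via $p$-convexity and the final triangle inequality are essentially mechanical, and the overall constant $\lambda+2+\varepsilon$ appears as the sum of the contributions from each piece, with the extra $\varepsilon$ absorbed by choosing $N$ large enough.
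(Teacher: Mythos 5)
Your overall architecture (head/tail splitting at some level $N$, concentration of the tail, triangle inequality, $\lambda+2+\varepsilon$ as the sum of the pieces) matches the paper's, but two of your key steps do not go through as described. First, the tail: you propose to make $Q_Nf$ cluster around a single vector $y\in X$ ``exploiting reflexivity of $X$''. But $X$ is not reflexive in general --- the $X_n$ are arbitrary spaces with $\lambda$-HIC$_{p,d}$ (the intended applications are quasi-reflexive spaces such as $\J_p$, and producing non-quasi-reflexive examples is the whole point of the theorem), so only $E$ is reflexive and no weak-compactness extraction in $X$ is available. The paper's device is to push $f$ forward to $E$ by the $1$-Lipschitz, norm-preserving map $\phi(x)=\sum_n\|x_n\|e_n$, concentrate $\phi\circ f$ around a single $u\in E$ using reflexivity of $E$ together with the $p$-convexity inequality $\limsup\|x+x_n\|^p\le\|x\|^p+\limsup\|x_n\|^p$ (the Kalton--Randrianarivony argument), and then deduce that the \emph{norm} of each individual tail $(I-\Pi_N)f(\overline{n})$ is at most $(\sum_j\alpha_j^p)^{1/p}+2\varepsilon'$; the factor $2$ in the final constant comes from the two tails of $f(\overline{n})$ and $f(\overline{m})$, not from a clustering of differences.

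Second, the head: your claim that the coordinate bounds $\|f_n(\overline{n})-f_n(\overline{m})\|\le\lambda k^{1/p}$ for $n\le N$ combine, via $p$-convexity and the global Lipschitz constraint, into $\|R_N(f(\overline{n})-f(\overline{m}))\|\le\lambda k^{1/p}$ ``with no $N$-dependence'' does not follow. Unconditionality only gives $\|R_N(f(\overline{n})-f(\overline{m}))\|\le\lambda k^{1/p}\bigl\|\sum_{n\le N}\Lip(f_n)e_n\bigr\|_E$, and $\sum_{n\le N}\Lip(f_n)e_n$ can have norm far larger than $\Lip(f)$ because the suprema defining the $\Lip(f_n)$ are attained on different edges. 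This is precisely why the theorem is stated for the \emph{directional} property HIC$_{p,d}$: the paper's Proposition 3.1 first stabilizes, by a nested Ramsey extraction, the directional constants $\alpha_j^{(n)}$ of each coordinate in each Hamming direction $j$, proves $\|\sum_n\alpha_j^{(n)}e_n\|_E\le\gamma_j$ (the directional constant of the full map), and only then applies $p$-convexity \emph{across the directions} $j=1,\dots,k$ rather than across the coordinates $n$. Relatedly, your proof as written only targets the bound $(\lambda+2+\varepsilon)k^{1/p}\Lip(f)$, which is property HIC$_p$; the statement asserts the stronger directional bound $(\lambda+2+\varepsilon)(\sum_j\alpha_j^p)^{1/p}$, and the directional constants cannot be discarded since they are exactly what makes the finite-sum step $N$-independent.
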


where property $\lambda$-HIC$_{p,d}$ is a refinement of the property $\lambda$-HIC$_p$ first considered by Lancien and Raja: a space $X$ has property $\lambda$-HIC$_p$ if for any Lipschitz function $f : ([\N]^k, \dH) \to X$, there exist $\nb, \mb$ in interlacing position such that $\|f(\nb)-f(\mb)\| \leq \lambda k^{1/p} \Lip(f)$ (see Section 2 for the definitions of the concentration properties). 

As a consequence, we get that an $\ell_q$-sum of a quasi-reflexive Banach space satisfying upper $\ell_p$ tree estimates, $1 < q,p < \infty$, cannot equi-Lipschitz contain the Hamming graphs. This is a generalization of the result mentionned above by Causey (see \cite{3.5}), who proved it for quasi-reflexive Banach spaces satisfying upper $\ell_p$ tree estimates. This is the first result of this type for non-quasi-reflexive Banach spaces.

In order to show this result, we introduce the notions and the terminology we will use later in the second section of this paper while Section $3$ is dedicated to the proof itself.

Recently, Baudier, Lancien, Motakis and Schlumprecht \cite{Baudier2020ANC} proved that any quasi-reflexive asymptotic-$c_0$ Banach space $X$ (see Section $4$ for the definition of asymptotic-$c_0$) has property HIC$_{\infty}$. Even though we don't know if a Banach space with this property is quasi-reflexive, we prove in the fourth and last section of this paper that property HIC$_{\infty}$ implies asymptotic-$c_0$. In particular, the space $T^*(T^*)$, where $T^*$ is the original Banach space constructed by Tsirelson in \cite{Tsirelson}, cannot have this concentration property. \\
We also give a striking example in the non-quasi-reflexive setting of a separable dual asymptotic-$c_0$ space that does not Lipschitz contain $\ell_1$ nor $c_0$ and without any of the concentration properties introduced in this paper. This example is based on a generalization of the construction of Lindenstrauss spaces, that we owe to Schlumprecht. This construction is detailed in the last section.

\section{Definitions and notation}

\subsection{Basic definitions}
All Banach spaces in these notes are assumed to be real and infinite-dimensional unless otherwise stated. We denote the closed unit ball of a Banach space $X$ by $B_X$, and its unit sphere by $S_X$. Given a Banach space $X$ with norm $\|\cdot\|_X$, we simply write $\|\cdot\|$ as long as it is clear from the context on which space it is defined. We recall that a Banach space is said to be \textit{quasi-reflexive} if the image of its canonical embedding into its bidual is of finite codimension in its bidual. \\
We say that a basic sequence $(e_i)_{i \in \N}$ of a Banach space $E$ is \textit{$c$-unconditional}, for some $c \geq 1$, if, for any $(a_i)_{i \in \N}, (b_i)_{i \in \N} \in c_{00}$ (the vector space of all real sequences with finite support), we have :
\[ \Big\| \sum_{i=1}^{\infty} a_i e_i \Big\| \leq c \Big\| \sum_{i=1}^{\infty} b_i e_i \Big\|  \]
whenever $|a_i|\leq |b_i|$ for every $i \in \N$. \\
Let $(X_n)_{n \in \N}$ be a sequence of Banach spaces. Let $\mathcal{E}=(e_n)_{n \in \N}$ be a $1$-unconditional basic sequence in a Banach space $E$ with norm $\|\cdot\|_E$. We define the sum $\left( \sum_{n \in \N} X_n \right)_{\mathcal{E}}$ to be the space of sequences $(x_n)_{n \in \N}$, where $x_n \in X_n$ for all $n \in \N$, such that $\sum_{n \in \N} \|x_n\|_{X_n} e_n$ converges in $E$, and we set
\[ \|(x_n)_{n \in \N} \| = \Big\| \sum_{n \in \N} \|x_n\|_{X_n} e_n \Big\|_E < \infty . \]
One can check that $\left( \sum_{n \in \N} X_n \right)_{\mathcal{E}}$, endowed with the norm $\|\cdot\|$ defined above, is a Banach space. We can, in a similar way, define finite sums $\left( \sum_{j=1}^n X_j \right)_{\mathcal{E}}$ for all $n \in \N$, and, in case $n=2$, we will write $X_1 \bigoplus\limits_{\mathcal{E}} X_2$. If it is implicit what is the basis $\mathcal{E}$ of the Banach space $E$ that we are working with, we write $\left( \sum_{n \in \N} X_n \right)_E$ or $X_1 \bigoplus\limits_E X_2$. Also, if the $X_n$'s are all the same, say $X_n=X$, for all $n \in \N$, we write $E(X)$. 

Let us finish this section with the following definition. \\
Let $p \in (1,  \infty )$ and $E$ be a Banach space with a $1$-unconditional basis $(e_n)_{n \in \N}$. We say that the basis $(e_n)_{n \in \N}$ is \textit{$p$-convex with convexity constant $C$} if :
\[ \Big\| \sum\limits_{j \in \N} (|x^1_j|^p + \cdots + |x_j^k|^p)^{\frac{1}{p}} e_j \Big\|^p \leq C^p \sum_{n=1}^k \|x^n\|^p \]
for all $x^1=\sum\limits_{j=1}^{\infty} x_j^1 e_j$, $\cdots$, $x^k=\sum\limits_{j=1}^{\infty} x_j^k e_j \in E$ (cf Definition 1.d.3 of \cite{LT2}).

\subsection{Hamming graphs}

Before introducing concentration properties, we need to define special metric graphs that we shall call \textit{Hamming graphs}. Let $\M$ be an infinite subset of $\N$. We denote by $[\M]^{\omega}$ the set of infinite subsets of $\M$. For $\M \in [\N]^{\omega}$ and $k \in \N$, let
\[ [\M]^k = \{ \overline{n}=(n_1, \dots, n_k) \in \M^k ; n_1 < \cdots < n_k \} , \]
\[ [\M]^{\leq k} = \bigcup_{j=1}^k [\M]^j \cup \{ \varnothing \} , \]
and
\[ [\M]^{< \omega} = \bigcup\limits_{k=1}^{\infty} [\M]^k \cup \{ \varnothing \} . \]
Then we equip $[\M]^k$ with the \textit{Hamming distance}:
\[ \dH(\overline{n}, \overline{m})=| \{ j ; n_j \neq m_j \} |  \]
for all $\overline{n}=(n_1, \dots, n_k), \overline{m}=(m_1, \dots, m_k) \in [\M]^k$. \\
Let us mention that this distance can be extended to $[\M]^{< \omega}$ by letting
\[ \dH(\overline{n}, \overline{m})=| \{ i \in \{1, \cdots, \min(l,j) \} ; n_i \neq m_i \} | + \max(l,j)-\min(l,j)  \]
for all $\overline{n}=(n_1, \dots, n_l), \overline{m}=(m_1, \dots, m_j) \in [\M]^{< \omega}$ (with possibly $l=0$ or $j=0$). \\
We also need to introduce $I_k(\M)$, the set of strictly interlaced pairs in $[\M]^k$:
\[ I_k(\M)= \{ (\overline{n}, \overline{m}) \subset [\M]^k ; n_1 < m_1 < \cdots < n_k < m_k  \} \]
and, for each $j \in \{ 1, \cdots, k \}$, let
\[ H_j(\M)= \{ (\overline{n}, \overline{m}) \subset [\M]^k ; \forall i \neq j, n_i=m_i \text{ and } n_j < m_j \} . \]
Note that, for $(\overline{n}, \overline{m}) \in I_k(\M)$, $\dH(\overline{n}, \overline{m})=k$ and $\overline{n} \cap \overline{m} = \varnothing$. \\

Let us mention that, in this paper, we will only be interested in the Hamming distance but originally, when Hamming graphs were used in \cite{KR}, it could be equally replaced (unless for their last Theorem 6.1) by the \textit{symmetric distance}, defined by
\[ d_{\Delta}(\overline{n}, \overline{m})=\dfrac{1}{2} | \overline{n} \triangle \overline{m} | \] 
for all $\overline{n}, \overline{m} \in [\N]^{< \omega}$, where $\overline{n} \triangle \overline{m}$ denotes the symmetric difference between $\overline{n}$ and $\overline{m}$.

\subsection{Asymptotic properties}
We now define the uniform asymptotic properties of norms that will be considered in this paper. Let $(X, \|\cdot\|)$ be a Banach space. Following Milman (see \cite{milman}), we introduce the two following moduli: for all $t \geq 0$, let
\[ \overline{\rho}_X(t)= \sup\limits_{x \in S_X} \inf\limits_{Y} \sup\limits_{y \in S_Y} (\|x+ty\|-1) \text{ and } \overline{\delta}_X(t)= \inf\limits_{x \in S_X} \sup\limits_{Y} \inf\limits_{y \in S_Y} (\|x+ty\|-1) \]
where $Y$ runs through all closed linear subspaces of $X$ of finite codimension. \\
We say that $\|\cdot\|$ is \textit{asymptotically uniformly smooth} (in short AUS) if $\lim_{t \to 0} \frac{\overline{\rho}_X(t)}{t}=0$. We say that $\|\cdot\|$ is \textit{asymptotically uniformly convex} (in short AUC) if $\overline{\delta}_X(t)>0$ for all $t >0$. If $p \in (1, \infty )$, $\|\cdot\|$ is said to be \textit{$p$-AUS} if there is a constant $C > 0$ such that, for all $t \in [0, \infty )$, $\overline{\rho}_X(t) \leq C t^p$. If $q \in [1, \infty )$, $\|\cdot\|$ is said to be \textit{$q$-AUC} if there is a constant $C > 0$ such that, for all $t \in [0, 1 ]$, $\overline{\delta}_X(t) \geq C t^q$. If $X$ has an equivalent norm for which $X$ is AUS (resp. $p$-AUS), $X$ is said to be \textit{AUSable} (resp. \textit{$p$-AUSable}). Every asymptotically uniformly smooth Banach space is $p$-AUSable for some $p \in (1, \infty)$, this was first proved for separable Banach spaces by Knaust, Odell and Schlumprecht (see \cite{knaust1999asymptotic}) and later generalized by Raja for any Banach space (see \cite{raja}, Theorem 1.2). 

Let $X$ be a Banach space, $B \subset X$ and $\M \in [\N]^{\omega}$. A family $(x_{\nb})_{\nb \in [\M]^{\leq k}}$ in $B$ is \textit{a tree in $B$} of \textit{height} $k$. This tree $(x_{\nb})_{\nb \in [\M]^{\leq k}}$ is said to be \textit{weakly null} if the sequence $(x_{\nb,n})_{n>\max(\nb)}$ is weakly null for every $\nb \in [\M]^{k-1} \cup \{\varnothing \}$ (with $\max(\varnothing)=0$).

Let $1 < p \leq \infty$, $C >0$. We say that $X$ satisfies \textit{upper $\ell_p$ tree estimates with constant $C$} if for any $k \in \N$ and any weakly null tree $(x_{\overline{n}})_{\overline{n} \in [\N]^{ \leq k}}$ in  $B_X$, there exists $\overline{n} \in [\N]^k$ such that
\[ \forall a=(a_1, \dots, a_k) \in \R^k, \Big\| \sum_{j=1}^k a_j x_{(n_1, \dots, n_j)} \Big\| \leq C \|a\|_{\ell_p^k} . \]
We say that $X$ satisfies \textit{upper $\ell_p$ tree estimates} if $X$ satisfies upper $\ell_p$ tree estimates with constant $C$ for some $C>0$. 

We say that $X$ has the \textit{tree-$p$-Banach-Saks property with constant $C$} if for any $k \in \N$ and any weakly null tree $(x_{\overline{n}})_{\overline{n} \in [\N]^{ \leq k}}$ in  $B_X$, there exists $\overline{n} \in [\N]^k$ such that
\[ \Big\| \sum_{j=1}^k x_{(n_1, \dots, n_j)} \Big\| \leq Ck^{1/p}  \]
with the convention $1/p=0$ if $p= \infty$. \\
We say that $X$ has the \textit{tree-$p$-Banach-Saks property} (tree-$p$-BS) if $X$ has the tree-$p$-Banach-Saks property with constant $C$ for some $C >0$. 

Let $p \in (1, \infty)$. We denote by $\Tp$ the class of all $p$-AUSable Banach spaces, by $\Ap$ the class of all Banach spaces satisfying upper $\ell_p$ tree estimates, by $\Np$ the class of all Banach spaces with tree-$p$-BS property and $\Pp=\bigcap_{1<r<p} \operatorname{T}_r$. It is known that (see \cite{GKL} and \cite{Braga2})
\[ \Tp \subset \Ap \subset \Np \subset \Pp  \]
and Causey proved in \cite{3.5} (cf Theorem 6.2) that these inclusions are strict (even among reflexive spaces). \\
It is also known (see \cite{Braga2}) that $\operatorname{N}_{\infty}=\operatorname{A}_{\infty}$.

\subsection{Metric embeddings}

Let us recall some definitions on metric embeddings. \\
Let $(X,d_X)$ and $(Y, d_Y)$ two metric spaces, $f$ a map from $X$ to $Y$. \\
We define the \textit{compression modulus} of $f$ by 
\[ \forall t \geq 0, \hspace{2mm} \rho_f(t)= \inf \{ d_Y(f(x),f(y)) ; d_X(x,y) \geq t \} ; \]
and the \textit{expansion modulus} of $f$ by 
\[ \forall t \geq 0, \hspace{0.2cm} \omega_f(t)= \sup \{ d_Y(f(x),f(y)) ; d_X(x,y) \leq t \} . \]
We adopt the convention $\inf(\varnothing)= + \infty$. Note that, for every $x,y \in X$, we have 
\[ \rho_f(d_X(x,y)) \leq d_Y(f(x),f(y)) \leq \omega_f(d_X(x,y)) . \]
We say that $f$ is a \textit{bi-Lipschitz embedding} if there exist $A,B$ in $(0, \infty )$ such that $\rho_f(t) \geq At$ and $\omega_f(t) \leq Bt$ for all $t \geq 0$. If there exists such an embedding $f$, we denote $(X, d_X) \underset{L}{\hookrightarrow} (Y, d_Y)$. \\
If the metric spaces are unbounded, the map $f$ is said to be a \textit{coarse embedding} if $\lim_{t \to \infty} \rho_f(t)= \infty$ and $\omega_f(t) < \infty$ for all $t > 0$. \\
If one is given a family of metric spaces $(X_i)_{i \in I}$, one says that $(X_i)_{i \in I}$ \textit{equi-Lipschitz embeds} into $Y$ if there exist $A, B$ in $(0, \infty )$ and, for all $i \in I$, maps $f_i : X_i \to Y$ such that $\rho_{f_i}(t) \geq At$ and $\omega_{f_i}(t) \leq Bt$ for all $t \geq 0$. One also says that the family $(X_i)_{i \in I}$ equi-coarsely embeds into $Y$ if there exist non-decreasing functions $\rho, \omega : [0, \infty ) \to [0, \infty )$ and for all $i \in I$, maps $f_i : X_i \to Y$ such that $\rho \leq \rho_{f_i}$, $\omega_{f_i} \leq \omega$, $\lim_{t \to \infty} \rho(t)= \infty$ and $\omega(t) < \infty$ for all $t > 0$. \\
Besides, we say that $f$ is a \textit{coarse Lipschitz embedding} if there exist $A, B, C, D$ in $(0, \infty)$ such that $\rho_f(t) \geq At-C$ and $\omega_f(t) \leq Bt+D$ for all $t \geq 0$. If $X$ and $Y$ are Banach spaces, this is equivalent to the existence of numbers $\theta \geq 0$ and $0 < c_1 < c_2$ so that :
\[ c_1 \|x-y\|_X \leq \|f(x)-f(y)\|_Y \leq c_2 \|x-y\|_X \]
for all $x,y \in X$ satisfying $\|x-y\|_X \geq \theta$.  \\
Finally, a way to refine the scale of coarse embeddings is to talk about compression exponents, introduced by Guentner and Kaminker in \cite{MR2160829}. Let $X$ and $Y$ to Banach spaces. The \textit{compression exponent of $X$ in $Y$}, denoted by $\alpha_Y(X)$, is the supremum of all $\alpha \in [0,1)$ for which there exist a coarse embedding $f :X \to Y$ and $A, C$ in $(0, \infty )$ so that $\rho_f(t) \geq A t^{\alpha}-C$ for all $t > 0$.

\subsection{Definitions of concentration properties}

In this subsection, we introduce all the concentration properties mentioned in this paper. Before doing so, let us recall a version of Ramsey's Theorem we will use several times.

\begin{theorem}[Ramsey's Theorem \cite{Ramsey}] 
Let $k \in \N$ and $\A \subset [\N]^k$. \\
There exists $\M \in [\N]^{\omega}$ such that either $[\M]^k \subset \A$ or $[\M]^k \cap \A = \varnothing$.
\end{theorem}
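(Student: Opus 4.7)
The plan is to proceed by induction on $k \in \N$. The base case $k = 1$ is immediate: after identifying $[\N]^1$ with $\N$, the set $\A$ is a subset of $\N$, so either $\A$ or $\N \setminus \A$ is infinite, and taking $\M$ to be the infinite one yields the desired dichotomy. Before starting the induction, I would record the elementary observation that the statement is invariant under restriction to an arbitrary infinite subset $L \subset \N$: transporting via the unique order-preserving bijection $L \to \N$ shows that if the theorem holds with ambient set $\N$ then it holds with ambient set $L$ as well. This is what will allow the inductive hypothesis to be applied inside smaller ambient sets during the construction.

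For the inductive step, assume the result at rank $k$ and let $\A \subset [\N]^{k+1}$. I would build simultaneously a strictly increasing sequence $n_1 < n_2 < \cdots$ in $\N$, a decreasing sequence of infinite sets $\N = \M_0 \supset \M_1 \supset \M_2 \supset \cdots$ with $n_j \in \M_{j-1}$, and a sequence of colors $(c_j)_{j \geq 1} \subset \{0,1\}$ as follows. At step $j$, pick $n_j \in \M_{j-1}$ strictly larger than $n_{j-1}$ (with the convention $n_0 = 0$), set $L_j = \M_{j-1} \cap (n_j, \infty)$, and consider the section
\[ \A_{n_j} = \{ \overline{m} \in [L_j]^k : (n_j, \overline{m}) \in \A \}. \]
By the induction hypothesis applied inside $L_j$, there is an infinite $\M_j \subset L_j$ with either $[\M_j]^k \subset \A_{n_j}$ (in which case set $c_j = 1$) or $[\M_j]^k \cap \A_{n_j} = \varnothing$ (in which case set $c_j = 0$).

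A pigeonhole argument then produces an infinite $J \subset \N$ on which $c_j$ takes a constant value $c$, and I claim $\M = \{n_j : j \in J\}$ is the desired set. Indeed, given any tuple $(n_{j_1}, \ldots, n_{j_{k+1}}) \in [\M]^{k+1}$ with $j_1 < \cdots < j_{k+1}$ in $J$, the monotonicity of the $\M_j$'s gives $n_{j_2}, \ldots, n_{j_{k+1}} \in \M_{j_1}$, so $(n_{j_2}, \ldots, n_{j_{k+1}}) \in [\M_{j_1}]^k$; and the dichotomy defining $c_{j_1} = c$ tells us this tuple lies in $\A_{n_{j_1}}$ if and only if $c = 1$. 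Hence $(n_{j_1}, \ldots, n_{j_{k+1}}) \in \A$ if and only if $c = 1$, which gives the uniform dichotomy on $[\M]^{k+1}$. The main obstacle, if any, is purely bookkeeping: one must maintain the nested chain of infinite sets while applying the induction hypothesis to a section at each step, but once the diagonal peeling is set up correctly the final pigeonhole extraction is automatic.
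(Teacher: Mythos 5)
Your argument is correct and complete: it is the standard proof of the $k$-uniform, two-colour Ramsey theorem by induction on $k$, using the transport of the statement to an arbitrary infinite ambient set, the diagonal construction of a nested chain $\M_0 \supset \M_1 \supset \cdots$ homogeneous for the successive sections $\A_{n_j}$, and a final pigeonhole on the colours $c_j$; the key verification that $n_{j_2}, \ldots, n_{j_{k+1}} \in \M_{j_1}$ follows, as you say, from the monotonicity of the chain. The paper does not prove this statement at all --- it is quoted as a classical result with a citation to Ramsey's original article --- so there is no argument in the paper to compare against, but your proof is the expected one and has no gaps.
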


The following properties are studied in \cite{KR}, \cite{LR} and \cite{Baudier2020ANC}. We use the convention $1/ \infty =0$.

\begin{definition}
Let $(X,d)$ be a metric space, $\lambda > 0$, $p \in (1, \infty ]$. \\
$\bullet$ We say that $X$ has property \textit{$\lambda$-HFC$_p$} (Hamming Full Concentration) if, for all $k \in \N$, for every Lipschitz function $f : ([\N]^k, \dH) \to X$, one can find $\M \in [\N]^{\omega}$ such that
\[ \forall \overline{n}, \overline{m} \in [\M]^k, \hspace{2mm}  d(f(\overline{n}),f(\overline{m})) \leq \lambda k^{\frac{1}{p}} \Lip(f) . \]
We say that $X$ has property \textit{HFC$_p$} if $X$ has property $\lambda$-HFC$_p$ for some $\lambda > 0$. \\
$\bullet$ We say that $X$ has property \textit{$\lambda$-HIC$_p$} (Hamming Interlaced Concentration) if, for all $k \in \N$, for every Lipschitz function $f : ([\N]^k, \dH) \to X$, one can find $(\overline{n}, \overline{m}) \in I_k(\N)$ satisfying
\[  d(f(\overline{n}),f(\overline{m})) \leq \lambda k^{\frac{1}{p}} \Lip(f) . \]
We say that $X$ has property \textit{HIC$_p$} if $X$ has property $\lambda$-HIC$_p$, for some $\lambda > 0$. \\
\end{definition}

\begin{rmk}
$1)$ Let us notice that, by Ramsey's Theorem ($I_k(\N)$ can be identified with $[\N]^{2k}$), a metric space $(X,d)$ has property $\lambda$-HIC$_p$ if and only if, for all $k \in \N$, for every Lipschitz function $f : ([\N]^k, \dH) \to X$, one can find $\M \in [\N]^{\omega}$ that satisfies
\[ \forall (\overline{n}, \overline{m}) \in I_k(\M), \hspace{2mm} d(f(\overline{n}),f(\overline{m})) \leq \lambda k^{\frac{1}{p}} \Lip(f) . \]
$2)$ Baudier, Lancien, Motakis and Schlumprecht showed that property HFC$_{\infty}$ is equivalent for a Banach space to being reflexive and asymptotic-$c_0$ (see \cite{Baudier2020ANC} for the proof of this result and Section \ref{section asympt} for the definition of asymptotic-$c_0$).
\end{rmk}

We now introduce a property that seems weaker than the previous one but is enough to prevent the equi-Lipschitz embedding (or equi-coarse embedding for the case $p= \infty$) of Hamming graphs. We will show later that this property actually coincides with property HIC$_p$, $p \in (1, \infty ]$.

\begin{definition}
Let $(X,d)$ be a metric space, $\lambda > 0$, and $p \in (1, \infty ]$. We say that $X$ has property \textit{$\lambda$-HC$_p$} if, for all $k \in \N$, for every Lipschitz function $f : ([\N]^k, \dH) \to X$, one can find $\overline{n}, \overline{m} \in [\N]^{k}$ satisfying $\overline{n} \cap \overline{m} = \varnothing$ and
\[  d(f(\overline{n}),f(\overline{m})) \leq \lambda k^{\frac{1}{p}} \Lip(f) . \]
We say that $X$ has property \textit{HC$_p$} if $X$ has property $\lambda$-HC$_p$, for some $\lambda > 0$.
\end{definition}

It is easy to check that all these concentration properties are stable under coarse Lipschitz embeddings and that properties HFC$_{\infty}$, HC$_{\infty}$ and HIC$_{\infty}$ are even stable under coarse embeddings, when the embedded space is a Banach space. 

Let us now introduce the last concentration properties we will study here, more precise than HC$_p$ and HIC$_p$, $p \in (1, \infty )$, where directional Lipschitz constants take part, hence the ``$d$" in subscript in the acronyms below.

\begin{definition}
Let $(X,d)$ be a metric space, $\lambda > 0$, $p \in (1, \infty)$. \\
$\bullet$ We say that $X$ has property \textit{$\lambda$-HFC$_{p,d}$} (resp. \textit{$\lambda$-HIC$_{p,d}$}) if, for every $k \in \N$ and every Lipschitz function $f : ([\N]^k, \dH) \to X$, there exists $\M \in [\N]^{\omega}$ such that
\[ d(f(\overline{n}),f(\overline{m}) ) \leq \lambda \left( \sum_{j=1}^k \alpha_j^p \right)^{\frac{1}{p}} \]
for all $\overline{n}, \overline{m} \in [\M]^k$ (resp. $(\overline{n}, \overline{m}) \in I_k(\M)$), where, for each $j \in \{1, \cdots, k\}$
\[ \alpha_j = \sup\limits_{(\overline{n}, \overline{m}) \in H_j(\N)} d(f(\overline{n}),f(\overline{m})) . \]
We say that $X$ has property \textit{HFC$_{p,d}$} (resp. \textit{HIC$_{p,d}$}) if $X$ has property $\lambda$-HFC$_{p,d}$ (resp. $\lambda$-HIC$_{p,d}$), for some $\lambda > 0$. \\
$\bullet$ Similary, we say that $X$ has property \textit{$\lambda$-HC$_{p,d}$} if, for every $k \in \N$ and every Lipschitz function $f : ([\N]^k, \dH) \to X$, one can find $\overline{n}, \overline{m} \in [\N]^k$ satisfying $\overline{n} \cap \overline{m} = \varnothing$ and
\[  d(f(\overline{n}),f(\overline{m})) \leq \lambda \left( \sum_{j=1}^k \alpha_j^p \right)^{\frac{1}{p}} \]
where the $\alpha_j$, $j \in \{1, \cdots, k\}$, are defined as above. \\
We say that $X$ has property \textit{HC$_{p,d}$} if $X$ has property $\lambda$-HC$_{p,d}$, for some $\lambda > 0$.
\end{definition}

It is important to note that Theorem 6.1 \cite{KR} and Theorem 5.2 \cite{3.5} can be rephrased as follows: for $p \in (1, \infty)$, a reflexive (resp. quasi-reflexive) Banach space satisfying upper $\ell_p$ tree estimates has property HFC$_{p,d}$ (resp. HC$_{p,d}$) and a reflexive (resp. quasi-reflexive) Banach space with tree-$p$-Banach-Saks property has property HFC$_p$ (resp. HC$_p$). Even though Kalton and Randrianarivony \cite{KR} proved their theorem for reflexive $p$-AUS Banach spaces, their proof implicitely contains the latter result. Let us also note that a Banach space with property HFC$_p$ is necessarily reflexive (see \cite{BKL}). In 2017, Lancien and Raja \cite{LR} proved that all quasi-reflexive $p$-AUS Banach spaces have property HC$_{p,d}$. It was later extended as mentionned by Causey \cite{3.5}.

The stability of these last properties under coarse Lipschitz embeddings when the embedded space is a Banach space is a bit less clear so we include a proof for completeness. 

\begin{prop}
Let $p \in (1, \infty )$, $P \in \{$HFC$_{p,d}$, HIC$_{p,d}$, HC$_{p,d}\}$, $X$ a Banach space and $(Y,d_Y)$ a metric space. \\
If $Y$ has property $P$ and $X$ coarse Lipschitz embeds into $Y$, then $X$ has property $P$.
\end{prop}

\begin{proof}
We only prove the stability of HFC$_{p,d}$, the proofs for the other two properties are similar. \\
Let us assume that $Y$ has property $\lambda$-HFC$_{p,d}$ for a $\lambda > 0$ and that there exist a map $\varphi : X \to Y$ and $A, B, C, D > 0$ such that $\rho_{\varphi}(t) \geq At-B$ and $\omega_{\varphi}(t) \leq Ct+D$ for all $t \geq 0$. \\
Let $k \in \N$, $f : ([\N]^k, \dH) \to X$ a Lipschitz function with $\Lip(f) > 0$. \\
Without loss of generality, we can assume that, for all $j \in \{1, \cdots,k\}$, we have
\[ \alpha_j = \sup\limits_{(\overline{n}, \overline{m}) \in H_j(\M)} \| f(\overline{n})-f(\overline{m}) \| > 0 . \] 
Indeed, $\dH$ is a graph metric so $\max_{j \in \{1, \cdots, k\}} \alpha_j = \Lip(f) > 0$ and if $\alpha_j=0$ for some $j \in \{1, \cdots, k\}$, then the expression of $f$ does not depend on this $j^{\text{\textsuperscript{th}\xspace}}$ coordinate. \\
Therefore \[ \alpha = \min\limits_{1 \leq j \leq k} \alpha_j \in ( 0, \Lip(f) ] . \] 
Let us note that $\omega_{\varphi}(t) \leq (C+D) t$ for all $t \geq 1$ so, for all $j \in \{1, \cdots, k \}$ and for all $(\overline{n}, \overline{m}) \in H_j(\N)$, we have
\[ d_Y \left( \varphi \left( \frac{1}{\alpha} f(\overline{n}) \right) , \varphi \left( \frac{1}{\alpha} f(\overline{m}) \right) \right) \leq \omega_{\varphi} \left( \frac{\alpha_j}{\alpha} \right) \leq \frac{C+D}{\alpha} \alpha_j . \]
Now, by assumption on $Y$, we can find $\M \in [\N]^{\omega}$ so that
\[ d_Y \left( \varphi \left( \frac{1}{\alpha} f(\overline{n}) \right) , \varphi \left( \frac{1}{\alpha} f(\overline{m}) \right) \right) \leq \frac{\lambda(C+D)}{\alpha} \left( \sum_{j=1}^k \alpha_j^p \right)^{\frac{1}{p}} \]
for all $\overline{n}, \overline{m} \in [\M]^k$. \\
Thus 
\[ \|f(\overline{n})-f(\overline{m}) \| \leq \frac{\lambda(C+D)}{A} \left( \sum_{j=1}^k \alpha_j^p \right)^{\frac{1}{p}} + \frac{\alpha B}{A} \leq \frac{\lambda(C+D)+B}{A} \left( \sum_{j=1}^k \alpha_j^p \right)^{\frac{1}{p}} \]
for all $\overline{n}, \overline{m} \in [\M]^k$. Consequently, $X$ has property HFC$_{p,d}$.
\end{proof}
 
As promised, the next proposition shows that properties HC$_p$ and HIC$_p$, $p \in (1, \infty ]$, are equivalent. This explains why we will only talk about property HC$_{\infty}$ in the last section. \\
Before proving this result, let us introduce some vocabulary. Let $\M \in [\N]^{\omega}$. For $\overline{n}, \overline{m} \in [\M]^k$ satisfying $\overline{n} \cap \overline{m} = \varnothing$, we denote by $\phi$ the unique increasing bijection from $\overline{n} \cup \overline{m}$ onto $\{1, \cdots, 2k \}$. If
\[ I = \left\{ A \subset \{1, \cdots, 2k \} ; |A|=k \right\}, \]
we say that $(\overline{n}, \overline{m})$ is in position $A \in I$ if $\phi(\overline{n})=A$. \\
Thus, we note that the pair $(\nb, \mb)$ with $\overline{n}, \overline{m} \in [\M]^k$ and $\overline{n} \cap \overline{m} = \varnothing$, can be in $\binom{2k-1}{k-1}$ possible different positions if we ask $n_1$ to be the first element (and we can do it without loss of generality). We denote these positions by $\p_i^k(\M)$, $i \in \left\{1, \cdots, \binom{2k-1}{k-1} \right\}$. Let us remark that each one of these positions can be identified with $[\M]^{2k}$, which will allow us to use Ramsey's Theorem.

\begin{prop} \label{prop KRI=K}
For every $p \in (1, \infty ]$, properties HC$_p$ and HIC$_p$ are equivalent. More precisely, a metric space with property $\lambda$-HIC$_p$, for some $\lambda > 0$, has property $\lambda$-HC$_p$ and a metric space with property $\lambda$-HC$_p$ has property $2 \lambda$-HIC$_p$.
\end{prop}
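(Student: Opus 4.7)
The direction $\lambda$-HIC$_p \Rightarrow \lambda$-HC$_p$ is immediate: any pair $(\overline{n},\overline{m}) \in I_k(\N)$ is by definition disjoint (the paper even notes $\overline{n} \cap \overline{m} = \varnothing$ for elements of $I_k$), so a witness for $\lambda$-HIC$_p$ is \emph{a fortiori} one for $\lambda$-HC$_p$ with the same constant.

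For the converse $\lambda$-HC$_p \Rightarrow 2\lambda$-HIC$_p$, my plan combines a Ramsey stabilization over all positions with a triangle inequality chain through a common relay. Fix $k \in \N$, a Lipschitz map $f : ([\N]^k, \dH) \to X$, and enumerate the $N = \binom{2k-1}{k-1}$ positions $\p_1^k(\N), \ldots, \p_N^k(\N)$. For each $i$, the identification of $\p_i^k$ with $[\N]^{2k}$ lets me $2$-color $[\N]^{2k}$ according to whether the pair coded in position $\p_i^k$ satisfies $d(f(\overline{n}), f(\overline{m})) \leq \lambda k^{1/p} \Lip(f)$. Iterating Ramsey's theorem $N$ times produces $\M \in [\N]^\omega$ monochromatic for each of these colorings, so every position is either \emph{small} (every pair from $[\M]^{2k}$ in that position has $f$-distance $\leq \lambda k^{1/p} \Lip(f)$) or \emph{large}. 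Applying $\lambda$-HC$_p$ to $f|_{[\M]^k}$ then yields a disjoint pair in $[\M]^k$ of small $f$-distance, forcing at least one position $\p_{i_0}^k$, coded by some $A \subset \{1,\ldots,2k\}$ with $1 \in A$, to be small.

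I then want to produce a single interlaced pair $(\overline{n}_0, \overline{m}_0) \in I_k(\M)$ together with a relay $\overline{r} \in [\M]^k$ such that both $(\overline{n}_0, \overline{r})$ and $(\overline{m}_0, \overline{r})$ realize position $\p_{i_0}^k$. Being in position $A$ forces each coordinate $r_j$ to lie in a specific slot interval relative to $\overline{n}_0$ and in the analogous slot interval relative to $\overline{m}_0$; for an interlaced pair these two intervals always intersect in the non-empty open interval $(m_{0,s_j}, n_{0,s_j+1})$, or in the unbounded tail $(m_{0,k}, +\infty)$ when the slot is maximal. Taking $\overline{n}_0, \overline{m}_0$ deep in $\M$ and spread out enough, each such slot contains as many elements of $\M$ as the number of $r_j$'s it must host, so $\overline{r}$ can be built coordinate by coordinate. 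The triangle inequality then yields
\[ d(f(\overline{n}_0), f(\overline{m}_0)) \leq d(f(\overline{n}_0), f(\overline{r})) + d(f(\overline{r}), f(\overline{m}_0)) \leq 2\lambda k^{1/p} \Lip(f), \]
giving the desired $2\lambda$-HIC$_p$ estimate.

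The main obstacle is this last chain construction: the good position $A$ given by HC$_p$ and Ramsey need \emph{not} be the interlaced position, so one must build a single relay $\overline{r}$ which simultaneously places the pair with $\overline{n}_0$ and the pair with $\overline{m}_0$ in position $A$. The key combinatorial point is that the gaps $(m_{0,s}, n_{0,s+1})$ produced by the interlacing of $(\overline{n}_0, \overline{m}_0)$ are exactly the intersections of the two slot intervals and can be made arbitrarily rich by choosing $\overline{n}_0, \overline{m}_0$ sufficiently deep in $\M$, so the relay exists regardless of which slot profile $A$ imposes.
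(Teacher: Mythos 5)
Your proof is correct and follows essentially the same route as the paper's: an iterated Ramsey stabilization over the $\binom{2k-1}{k-1}$ positions to extract one ``small'' position, followed by a triangle inequality through a relay $k$-tuple that realizes this position with both members of an interlaced pair. The only (cosmetic) difference is the order of construction --- the paper fixes a widely spaced pair $(\overline{n},\overline{p})$ in the good position and then inserts $\overline{m}$ interlaced with $\overline{n}$, whereas you fix the interlaced pair first and insert the relay $\overline{r}$ into the gaps $(m_{0,s}, n_{0,s+1})$ --- and both reduce to the same counting of elements of $\M$ available in each slot.
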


\begin{proof}
For every $p \in (1, \infty ]$, $\lambda >0$, the implication $\lambda$-HIC$_p \implies$ $\lambda$-HC$_p$ is clear so let us show the other implication. \\
We will do it with $p = \infty$, the other cases can be treated similarly. \\
Let $(X,d)$ be a metric space with property $\lambda$-HC$_{\infty}$ for some $\lambda >0$. Let $k \in \N$, $f : ([\N]^k, d_{\mathbb{H}}) \to X$ a Lipschitz function. \\
For each $\M \in [\N]^{\omega}$, there exist $i \in \left\{1, \cdots, \binom{2k-1}{k-1} \right\}$ and $(\overline{n}, \overline{m}) \in \p_i^k(\M)$ such that $d(f(\overline{n}),f(\overline{m})) \leq \lambda \Lip(f)$. \\
Let us show that there exist $i \in \left\{1, \cdots, \binom{2k-1}{k-1} \right\}$ and $\M \in [\N]^{\omega}$ such that $d(f(\overline{n}),f(\overline{m})) \leq \lambda \Lip(f)$ for all $(\overline{n}, \overline{m}) \in \p_i^k(\M)$. \\
By Ramsey's Theorem, if $\mathcal{A}_1= \{ (\overline{n}, \overline{m}) \in \p_1^k(\N) ; d(f(\overline{n}),f(\overline{m})) \leq \lambda \Lip(f) \} \subset \p_1^k(\N)$, there exists $\M_1 \in [\N]^{\omega}$ such that $\p_1^k(\M_1) \subset \mathcal{A}_1$ or $\p_1^k(\M_1) \cap \mathcal{A}_1 = \varnothing$. \\
If $\p_1^k(\M_1) \cap \mathcal{A}_1 = \varnothing$, we apply the same result with $\mathcal{A}_2= \{ (\overline{n}, \overline{m}) \in \p_2^k(\M_1) ; d(f(\overline{n}),f(\overline{m})) \leq \lambda \Lip(f) \} \subset \p_2^k(\M_1)$ and we get $\M_2 \in [\M_1]^{\omega}$ such that $\p_2^k(\M_2) \subset \mathcal{A}_2$ or $\p_2^k(\M_2) \cap \mathcal{A}_2 = \varnothing$. \\
We continue this way inductively. \\ 
As $X$ has property $\lambda$-HC$_{\infty}$, we cannot repeat this operation for all $\binom{2k-1}{k-1}$ positions so there exist $i \in \left\{1, \cdots, \binom{2k-1}{k-1} \right\}$ and $\M \in [\N]^{\omega}$ such that $d(f(\overline{n}),f(\overline{m})) \leq \lambda \Lip(f)$ for all $(\overline{n}, \overline{m}) \in \p_i^k(\M)$. \\
Let us show that there exists $(\overline{n}, \overline{m}) \in I_k(\N)$ such that $d(f(\overline{n}),f(\overline{m})) \leq 2 \lambda \Lip(f)$. \\
For that, let $\M=\{q_1 < q_2 < \cdots < q_j < \cdots \}$. \\
Now, we just have to observe that we can choose $(\overline{n}, \overline{p}) \in \p_i^k(\M)$ such that $n_1 < p_1$ and $\overline{n}, \overline{p} \subset \{ q_1, q_{2k+1}, \cdots, q_{2k(2k-1)+1} \}$. This leaves us enough space to get an element $\overline{m} \in [\M]^k$ so that $(\overline{n}, \overline{m}) \in I_k(\M)$ and $(\overline{m}, \overline{p}) \in \p_i^k(\M)$. \\
The result follows from the triangle inequality.
\end{proof}

\begin{rmk} \label{rmq KRI=K}
With a similar proof, we can prove that properties HC$_{p,d}$ and HIC$_{p,d}$ are equivalent.
\end{rmk}

\section{Stability under sums} \label{section sums}
\subsection{Statements}

In order to prove the stability of property HC$_{p,d}$, $p \in (1, \infty)$, under $\ell_p$ sums, the idea is to adapt Braga's proof of Proposition 7.2 in \cite{Braga2} with property HIC$_{p,d}$ instead of property $p$-Banach-Saks. \\
To do so, we need the following proposition. We chose to state it with property HIC$_{p,d}$, which we recall is equivalent to property HC$_{p,d}$, but the same result can be shown for property HFC$_{p,d}$ with a similar proof.

\begin{prop} \label{somme 2 esp}
Let $p \in (1, + \infty)$, $\lambda > 0$, $E$ be a Banach space with a normalized $1$-unconditional $p$-convex basis $(e_n)_{n \in \N}$ with convexity constant $1$. \\
For every $n \in \N$ and every finite sequence $(X_j)_{j=1}^n$ of Banach spaces having property $\lambda$-HIC$_{p,d}$, the space $\left( \sum_{j=1}^n X_j \right)_E$ has property $(\lambda +\varepsilon)$-HIC$_{p,d}$ for each $\varepsilon > 0$.
\end{prop}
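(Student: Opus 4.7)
The plan is to produce $\M \in [\N]^\omega$ via two successive refinements. Given a Lipschitz map $f : ([\N]^k, \dH) \to (\sum_j X_j)_E$, denote its coordinates by $f_j : [\N]^k \to X_j$ and set
\[ \alpha_i := \sup_{(\nb, \mb) \in H_i(\N)} \|f(\nb) - f(\mb)\| \qquad \text{and} \qquad A := \Bigl(\textstyle\sum_{i=1}^k \alpha_i^p\Bigr)^{1/p}. \]
One may assume $A > 0$ (otherwise $f$ is constant and there is nothing to prove). Fix $\epsilon > 0$ small enough that $\lambda n \epsilon k^{1/p} \leq \varepsilon A$. The strategy is: first, a Ramsey argument to simultaneously stabilize all $n$ scalars $v_j(\nb, \mb) := \|f_j(\nb) - f_j(\mb)\|_{X_j}$ on each $H_i(\N)$; second, iterated application of $\lambda$-HIC$_{p,d}$ to the components $f_1, \ldots, f_n$; third, reassembly via the $p$-convexity of $(e_n)_{n \in \N}$.

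For the Ramsey step, the map $(\nb, \mb) \mapsto \nb \cup \mb$ identifies $H_i(\N)$ with $[\N]^{k+1}$, while $v(\nb, \mb) = (v_j(\nb, \mb))_{j=1}^n$ takes values in the bounded box $[0, \Lip(f)]^n$. Partitioning this box into finitely many sub-boxes of side $\epsilon$ and applying Ramsey's theorem (for finite colorings) $k$ times --- once for each $i \in \{1, \ldots, k\}$ --- produces $\M^* \in [\N]^\omega$ and vectors $c^{(i)} \in [0, \Lip(f)]^n$ with $c^{(i)}_j < v_j(\nb, \mb) \leq c^{(i)}_j + \epsilon$ for every $j$ and every $(\nb, \mb) \in H_i(\M^*)$. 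Selecting any such pair, $1$-unconditionality of $(e_j)$ forces
\[ \bigl\| \textstyle\sum_j c^{(i)}_j e_j \bigr\|_E \leq \bigl\| \textstyle\sum_j v_j(\nb, \mb) e_j \bigr\|_E = \|f(\nb) - f(\mb)\| \leq \alpha_i. \]
Iteratively applying $\lambda$-HIC$_{p,d}$ to each $f_j$ restricted to $[\M^*]^k$ (viewed as a Lipschitz map on $[\N]^k$ via the order-isomorphism $\N \to \M^*$) then yields $\M \subset \M^*$ such that for every $j$ and every $(\nb, \mb) \in I_k(\M)$,
\[ \|f_j(\nb) - f_j(\mb)\|_{X_j} \leq \lambda \Bigl( \textstyle\sum_i \beta_{i,j}^p \Bigr)^{1/p} \leq \lambda \Bigl( \textstyle\sum_i (c^{(i)}_j + \epsilon)^p \Bigr)^{1/p}, \]
with $\beta_{i,j} := \sup_{H_i(\M^*)} v_j \leq c^{(i)}_j + \epsilon$.

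For the reassembly, $1$-unconditionality, $p$-convexity with constant $1$ applied to $x^i := \sum_j (c^{(i)}_j + \epsilon) e_j$, the estimate $\|x^i\|_E \leq \alpha_i + n\epsilon$ (using $\|\sum_j e_j\|_E \leq n$), and the $\ell_p$-triangle inequality combine into
\[ \|f(\nb) - f(\mb)\| \leq \lambda \bigl(\textstyle\sum_i \|x^i\|_E^p\bigr)^{1/p} \leq \lambda \bigl(\textstyle\sum_i (\alpha_i + n\epsilon)^p\bigr)^{1/p} \leq \lambda(A + n\epsilon k^{1/p}) \leq (\lambda + \varepsilon) A, \]
which is the required $(\lambda+\varepsilon)$-HIC$_{p,d}$ bound. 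The genuine obstacle is the Ramsey step: the crude bound $\beta_{i,j} \leq \alpha_i$ only yields $\|\sum_j \beta_{i,j} e_j\|_E \leq \alpha_i \|\sum_j e_j\|_E$, a right-hand side that depends on the number of summands $n$ and destroys the uniform-in-$n$ conclusion. Forcing the coordinate-wise suprema $\beta_{i,j}$ to be jointly (approximately) realized on a common pair in $H_i(\M^*)$ is exactly what upgrades the componentwise bounds to $\|\sum_j c^{(i)}_j e_j\|_E \leq \alpha_i$, which then threads through $p$-convexity with no $n$-dependence.
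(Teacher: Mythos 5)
Your proof is correct, and its overall architecture --- stabilize the coordinatewise directional constants, apply $\lambda$-HIC$_{p,d}$ to each component on the stabilized set, and reassemble through $p$-convexity and $1$-unconditionality --- is the same as the paper's. The difference lies in how the stabilization is carried out, and in the fact that you treat all $n$ summands at once. The paper first reduces to $X_1\bigoplus_E X_2$ and, for each direction $j$, sets $\alpha_j=\inf_{\M_j}\sup_{H_j(\M_j)}\|f(\nb)-f(\mb)\|$ (and likewise $\beta_j$ for the second coordinate), realizes these infima within $\varepsilon'$, and then proves the key inequality $\|\alpha_j e_1+\beta_j e_2\|\leq\gamma_j$ by contradiction: if it failed, every pair would have to be small in at least one coordinate, and a Ramsey dichotomy would then contradict the infimum defining $\alpha_j$ or $\beta_j$. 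You instead colour $H_i(\N)\simeq[\N]^{k+1}$ by which $\epsilon$-box the vector $(v_1,\dots,v_n)$ of coordinate distances falls into and extract the joint realization directly from any witnessing pair; this yields the same inequality $\|\sum_j c^{(i)}_j e_j\|_E\leq\alpha_i$ without a contradiction argument, and without the $n=2$ reduction (which in the paper requires an induction with shrinking $\varepsilon$'s, and an associativity of $E$-sums that is left implicit, to recover the uniform constant $\lambda+\varepsilon$ for $n$ summands). You also correctly identify why the joint realization is the crux: the crude bound $\beta_{i,j}\leq\alpha_i$ alone would cost a factor $\|\sum_{j=1}^n e_j\|_E$. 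Two cosmetic points: the strict inequality $c^{(i)}_j<v_j$ should be non-strict (use half-open boxes, since some coordinates may be identically $0$), and the lattice-monotonicity you invoke ($0\leq a_j\leq b_j$ implies $\|\sum a_je_j\|\leq\|\sum b_je_j\|$) is the standard consequence of $1$-unconditionality that the paper also uses silently.
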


\begin{proof}
It is enough to prove this result for $X=X_1 \bigoplus\limits_{E} X_2$. \\
Let $k \in \N$, $\M \in [\N]^{\omega}$, $\varepsilon > 0$, $h=(f,g) : ([\M]^k,\dH) \to X$ a Lipschitz function. \\
For each $j \in \{1, \cdots, k\}$, let $\gamma_j = \sup\limits_{(\overline{n}, \overline{m}) \in H_j(\M)} \|h(\overline{n})-h(\overline{m})\|$. \\
There exists $\varepsilon' > 0$ such that 
\[ \lambda^p \sum\limits_{j=1}^k (\gamma_j+2 \varepsilon')^p \leq (\lambda+\varepsilon)^p \sum\limits_{j=1}^k \gamma_j^p . \]
Let $\alpha_1=\inf\limits_{ \M_1 \in [\M]^{\omega}} \sup\limits_{(\overline{n},\overline{m}) \in H_1(\M_1)} \|f(\overline{n})-f(\overline{m})\|$. \\
There exists $\M_1 \in [\M]^{\omega}$ so that $\|f(\overline{n})-f(\overline{m})\| \leq \alpha_1+ \varepsilon'$ for every $(\overline{n},\overline{m}) \in H_1(\M_1)$. \\
Let $\beta_1=\inf\limits_{ \M_1' \in [\M_1]^{\omega}} \sup\limits_{(\overline{n},\overline{m}) \in H_1(\M_1')} \|g(\overline{n})-g(\overline{m})\|$. \\
There exists $\M_1' \in [\M_1]^{\omega}$ so that $\|g(\overline{n})-g(\overline{m})\| \leq \beta_1+ \varepsilon'$ for every $(\overline{n},\overline{m}) \in H_1(\M_1')$. \\
We continue inductively this way until we define $\alpha_k$ and $\beta_k$ as follows. \\
Let $\alpha_k=\inf\limits_{ \M_k \in [\M_{k-1}']^{\omega}} \sup\limits_{(\overline{n},\overline{m}) \in H_k(\M_k)} \|f(\overline{n})-f(\overline{m})\|$. \\
There exists $\M_k \in [\M_{k-1}']^{\omega}$ so that $\|f(\overline{n})-f(\overline{m})\| \leq \alpha_k+ \varepsilon'$ for every $(\overline{n},\overline{m}) \in H_k(\M_k)$. \\
Let $\beta_k=\inf\limits_{ \M_k' \in [\M_k]^{\omega}} \sup\limits_{(\overline{n},\overline{m}) \in H_k(\M_k')} \|g(\overline{n})-g(\overline{m})\|$. \\
There exists $\M_k' \in [\M_k]^{\omega}$ so that $\|g(\overline{n})-g(\overline{m})\| \leq \beta_k+ \varepsilon'$ for every $(\overline{n},\overline{m}) \in H_k(\M_k')$. \\
$\bullet$ Let us begin by showing that $\| \alpha_j e_1 + \beta_j e_2 \| \leq \gamma_j$ for all $j \in \{1, \cdots, k\}$. \\
For that, assume that there exists $j \in \{1, \cdots, k\}$ such that $\| \alpha_j e_1 + \beta_j e_2 \| > \gamma_j$. Then, there exists $\eta > 0$ so that $\| (\alpha_j-\eta) e_1 + (\beta_j-\eta) e_2 \| > \gamma_j$. \\
$\ast$ If there exists $(\overline{n}, \overline{m}) \in H_j(\M_k')$ such that $\|f(\overline{n})-f(\overline{m})\| \geq \alpha_j- \eta$ and $\|g(\overline{n})-g(\overline{m})\| \geq \beta_j- \eta$, then $\| h(\overline{n})-h(\overline{m})\| > \gamma_j$, which is impossible. \\
$\ast$ So $\|f(\overline{n})-f(\overline{m})\| \leq \alpha_j- \eta$ or $\|g(\overline{n})-g(\overline{m})\| \leq \beta_j- \eta$ for all $(\overline{n}, \overline{m}) \in H_j(\M_k')$. \\
Now we note that $H_j(\M_k')$ can be identified with $[\M_k']^{k+1}$ so, by Ramsey's Theorem, we get $\M' \in [\M_k']^{\omega}$ such that $\|f(\overline{n})-f(\overline{m})\| \leq \alpha_j- \eta$ for all $(\overline{n}, \overline{m}) \in H_j(\M')$ or $\|g(\overline{n})-g(\overline{m})\| \leq \beta_j- \eta$ for all $(\overline{n}, \overline{m}) \in H_j(\M')$. This contradicts the definition of $\alpha_j$ or $\beta_j$. \\
Thus $\| \alpha_j e_1 + \beta_j e_2 \| \leq \gamma_j$ for all $j \in \{1, \cdots, k\}$. \\
$\bullet$ By assumption, there exists $\M' \in [M_k']^{\omega}$ so that 
\[ \|f(\overline{n})-f(\overline{m}) \| \leq \lambda \left( \sum_{j=1}^k (\alpha_j+\varepsilon')^p \right)^{\frac{1}{p}} \text{ and } \|g(\overline{n})-g(\overline{m}) \| \leq \lambda \left( \sum_{j=1}^k (\beta_j+\varepsilon')^p \right)^{\frac{1}{p}} \]
for all $(\overline{n},\overline{m}) \in I_k(\M')$.
Let $x^n=(\alpha_n+ \varepsilon')e_1 + (\beta_n+\varepsilon')e_2$ for each $n \in \{1, \cdots, k\}$. Using $p$-convexity, we get :
\begin{align*}
\|h(\overline{n})-h(\overline{m})\|^p & \leq \lambda^p \Big\| \left( \sum_{j=1}^k (\alpha_j+\varepsilon')^p \right)^{\frac{1}{p}} e_1 + \left( \sum_{j=1}^k (\beta_j+\varepsilon')^p \right)^{\frac{1}{p}} e_2 \Big\|^p \\
& \leq \lambda^p \sum_{n=1}^k \|x^n\|^p = \lambda^p \sum_{j=1}^k \| (\alpha_j+ \varepsilon')e_1 + (\beta_j+\varepsilon')e_2 \|^p \\
& \leq \lambda^p \sum_{j=1}^k (\gamma_j + 2 \varepsilon')^p 
\end{align*}
for all $(\overline{n}, \overline{m}) \in I_k(\M')$. \\
Therefore,
\[ \|h(\overline{n})-h(\overline{m})\|^p \leq (\lambda+ \varepsilon) \left( \sum\limits_{j=1}^k \gamma_j^p \right)^{\frac{1}{p}}, \] 
for all $(\overline{n}, \overline{m}) \in I_k(\M')$, \ie, $X$ has $(\lambda+\varepsilon)$-HIC$_{p,d}$.
\end{proof}

From this property about finite sums, we can deduce our main result. In order to do so, let us remark that a Banach space $E$ that has a $p$-convex basis with constant $1$ satisfies the following: if $x \in E$ and $(x_n)_{n \in \N}$ is a weakly null sequence in $E$, then
\[ \limsup \|x+x_n\|^p \leq \|x\|^p + \limsup \|x_n \|^p . \]
Therefore, we deduce from the proof of Theorem 4.2 \cite{KR} that if $E$ is in addition reflexive, then for every $k \in \N$, every $\M \in [\N]^{\omega}$, every $\varepsilon > 0$ and every Lipschitz function $f : ([\M]^k, \dH) \to E$, there exist $\M' \in [\M]^{\omega}$ and $u \in E$ so that 
\[ \|f(\overline{n})-u\| \leq \left( \sum_{j=1}^k \alpha_j^p \right)^{\frac{1}{p}}+ \varepsilon \]
for all $\overline{n} \in [\M']^k$, where $\alpha_j=\sup\limits_{(\overline{n},\overline{m}) \in H_j(\M)} \|f(\overline{n})-f(\overline{m})\|$ for all $j \in \{1, \cdots, k\}$.

We now prove Theorem \ref{thm somme E}.

\begin{proof}[Proof of Theorem \ref{thm somme E}]
Let $\varepsilon > 0$, $\M \in [\N]^{\omega}$, $k \in \N$, $f : ([\M]^k,\dH) \to X$ a Lipschitz function. \\
There exists $\varepsilon' > 0$ such that 
\[ (\lambda+2+\varepsilon') \left( \sum\limits_{j=1}^k \alpha_j^p \right)^{\frac{1}{p}} +4 \varepsilon' \leq (\lambda+2+\varepsilon)\left( \sum\limits_{j=1}^k \alpha_j^p \right)^{\frac{1}{p}}\] 
where 
\[ \alpha_j=\sup\limits_{(\overline{n},\overline{m}) \in H_j(\M)} \|f(\overline{n})-f(\overline{m})\| \] for all $j \in \{1, \cdots, k\}$. \\
The well-defined map \[ \phi : \left\lbrace \begin{array}{lll}
X & \to & E \\
(x_n)_{n \in \N} & \mapsto & \sum\limits_{n=1}^{\infty} \|x_n\| e_n
\end{array} \right. \] 
satisfies $\Lip(\phi) \leq 1$ and $\| \phi(x)\|=\|x\|$ for all $x \in X$, thus
\[ \sup\limits_{(\overline{n},\overline{m}) \in H_j(\M)} \| \phi \circ f(\overline{n})-\phi \circ f (\overline{m}) \| \leq \alpha_j \]
for every $j \in \{1, \cdots, k\}$. \\
From the previous remark, we get $u \in E$ and $\M' \in [\M]^{\omega}$ such that 
\[ \| \phi \circ f(\overline{n})-u \| \leq \left( \sum_{j=1}^k \alpha_j^p \right)^{\frac{1}{p}}+ \varepsilon' \]
for all $\overline{n} \in [\M']^k$. Let $N \in \N$ such that $\Big\| \sum_{k=N+1}^{\infty} \|u_k\| e_k \Big\| \leq \varepsilon'$. \\
For each $n \in \N$, let us denote by $P_n$ the projection from $X$ onto $X_n$ and $\Pi_n$ the projection from $X$ onto $\left( \sum_{k=1}^n X_k \right)_E$. \\
We have
\begin{align*}
\Big\| \sum_{n=N+1}^{\infty} \|P_n \circ f(\overline{n})\| e_n \Big\| & \leq \Big\| \sum_{n=N+1}^{\infty} \|P_n \circ f(\overline{n})\| e_n \Big\| - \Big\| \sum\limits_{n=N+1}^{\infty} \|u_n\| e_n \Big\| + \varepsilon' \\
& \leq \Big\| \sum_{n=N+1}^{\infty} (\|P_n \circ f(\overline{n})\| - \|u_n \|) e_n \Big\| + \varepsilon' \\
& \leq \| \phi \circ f(\overline{n})-u \| + \varepsilon' \\
& \leq \left( \sum_{j=1}^k \alpha_j^p \right)^{\frac{1}{p}}+ 2 \varepsilon'
\end{align*}
for all $\overline{n} \in [\M']^k$. \\
Moreover, according to Proposition \ref{somme 2 esp}, we get an infinite subset $\M'' \in [\M']^{\omega}$ such that 
\[ \| \Pi_N \circ f(\overline{n})-\Pi_N \circ f(\overline{m}) \| \leq (\lambda+\varepsilon') \left( \sum_{j=1}^k \alpha_j^p \right)^{\frac{1}{p}} \]
for all $(\overline{n}, \overline{m}) \in I_k(\M'')$. \\
We deduce
\begin{align*}
\|f(\overline{n})-f(\overline{m})\| & \leq \| \Pi_N(f(\overline{n})-f(\overline{m})) \| + \| (I-\Pi_N) \circ f(\overline{n})\|  + \| (I-\Pi_N) \circ f(\overline{m}) \| \\
& \leq (\lambda+\varepsilon') \left( \sum_{j=1}^k \alpha_j^p \right)^{\frac{1}{p}} + \left( \sum_{j=1}^k \alpha_j^p \right)^{\frac{1}{p}}+ 2 \varepsilon' + \left( \sum_{j=1}^k \alpha_j^p \right)^{\frac{1}{p}}+ 2 \varepsilon' \\
& \leq (\lambda+2+\varepsilon)\left( \sum\limits_{j=1}^k \alpha_j^p \right)^{\frac{1}{p}}
\end{align*}
for all $(\overline{n}, \overline{m}) \in I_k(\M'')$. The result follows.
\end{proof}

\begin{rmk}
With this result and Proposition \ref{prop KRI=K}, we immediately deduce the following: if each $X_n$, $n \in \N$, has property $\lambda$-HC$_{p,d}$, then $(\sum_{n \in \N} X_n)_E$ has property $(2 \lambda + 2 + \varepsilon)$-HC$_{p,d}$ for every $\varepsilon > 0$.
\end{rmk}

Once again, we chose to state this theorem with property HIC$_{p,d}$, but the result stays true for property HFC$_{p,d}$, with a similar proof.

\begin{rmk}
$\bullet$ Of course, the condition that all spaces have property HIC$_{p,d}$ with the same constant is essential because 
\[ ([\N]^{< \omega}, \dH) \underset{L}{\hookrightarrow} X_{\omega}=\left( \sum\limits_{n=1}^{\infty} \ell_1^n(\ell_2) \right)_{\ell_2} \] 
even though $\ell_1^n(\ell_2)$ has property HFC$_{2,d}$ (it is reflexive and $2$-AUS) for every $n \in \N$. \\
To see that, let us note that, for every $k \in \N$, $([\N]^{ \leq k}, \dH)$ isometrically embeds into $\ell_1^k(\ell_2)$. Then, the barycentric gluing technique by Baudier (see \cite{baudier2020barycentric}) gives us a bi-Lipschitz embedding from $([\N]^{< \omega}, \dH)$ into $X_{\omega}$. \\
$\bullet$ In \cite{Braga1}, Braga asked the following (Problem 3.7): if a Banach space $X$ has the Banach-Saks property, \ie, every bounded sequence in $X$ admits a subsequence whose Ces\`aro means converge in norm, does it follow that $([\N]^{< \omega}, d_{\Delta})$ does not Lipschitz embed into $X$ ? The answer to this question is negative. Indeed, let $(p_n)_{n \in \N} \subset (1, \infty)$ be a decreasing sequence such that $\liminfty p_n=1$ and let $X=\left( \sum_{n=1}^{\infty} \ell_{p_n} \right)_{\ell_2}$. With a similar argument, or simply appealing to Ribe's Theorem \cite{ribe} (that implies that $\ell_1$ coarse Lipschitz embeds into $X$), we see that 
\[ ([\N]^{< \omega}, \dH) \underset{L}{\hookrightarrow} X \text{ and } ([\N]^{< \omega}, d_{\Delta}) \underset{L}{\hookrightarrow} X \]
even though the space $X$ has the Banach-Saks property (see \cite{partington}).
\end{rmk}

Before we write a direct consequence of this theorem, let us briefly recall the definition of the James sequence spaces. \\
Let $p \in (1, \infty )$. The James space $\J_p$ is the real Banach space of all sequences $x=(x(n))_{n \in \N}$ of real numbers with finite $p$-variation and verifying $\lim_{n \to \infty} x(n)=0$. The space $\J_p$ is endowed with the following norm
\[ \|x\|_{\J_p}= \sup \left\{ \left( \sum_{i=1}^{k-1} |x(p_{i+1})-x(p_i)|^p \right)^{\frac{1}{p}} ; 1 \leq p_1 < p_2 < \cdots < p_k \right\} . \]
The space $\J=\J_2$, constructed by James in \cite{james}, is the historical example of a quasi-reflexive Banach space which is isomorphic to its bidual. In fact, $\J_p^{**}$ can be seen as the space of all sequences of real numbers with finite $p$-variation, which is $\J_p \oplus \R e$, where $e$ denotes the constant sequence equal to $1$. \\
Besides of being quasi-reflexive, the space $\J_p$ has the property of being $p$-AUSable (see \cite{Netillard}, Proposition 2.3) and its dual $\J_p^*$ is $q$-AUSable, where $q$ denotes the conjugate exponent of $p$ (see \cite{LPP} and references therein).

We can now state the following corollary.

\begin{corollary} \label{cor thm somme E}
Let $p,q \in (1, \infty)$, $p'$ the conjugate exponent of $p$, $s= \min(p,q)$ and $t=\min(p',q)$. \\
If $X$ a quasi-reflexive Banach space satisfying upper $\ell_p$ tree estimates, then the space $\ell_q(X)$ has property HC$_{s,d}$. \\
In particular, $\ell_q(\J_p)$ has property HC$_{s,d}$ and $\ell_q(\J_p^*)$ has property HC$_{t,d}$.
\end{corollary}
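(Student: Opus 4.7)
The plan is to combine Causey's theorem (reformulated in the text as: a quasi-reflexive space satisfying upper $\ell_p$ tree estimates has property HC$_{p,d}$) with Theorem \ref{thm somme E}, bridging the exponents $p$ and $q$ via the universal exponent $s = \min(p,q)$ and the monotonicity of concentration in the exponent. Throughout I treat HC$_{\cdot,w}$ as synonymous with HC$_{\cdot,d}$, which is how the remark immediately preceding the corollary uses the notation.

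First I would reduce to an $s$-concentration statement on the summand. Causey's theorem gives HC$_{p,d}$ for $X$, which by Remark \ref{rmq KRI=K} is equivalent to $\lambda$-HIC$_{p,d}$ for some $\lambda > 0$. Since $s \leq p$, the elementary inequality $\bigl(\sum_j \alpha_j^p\bigr)^{1/p} \leq \bigl(\sum_j \alpha_j^s\bigr)^{1/s}$ upgrades $\lambda$-HIC$_{p,d}$ to $\lambda$-HIC$_{s,d}$, costing nothing in the constant.

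Next I would verify that $E = \ell_q$ satisfies the hypotheses of Theorem \ref{thm somme E} for the exponent $s$. The unit vector basis is normalized and $1$-unconditional, $\ell_q$ is reflexive, and the crucial point is $s$-convexity with constant $1$: writing $r = q/s \geq 1$ (which holds since $s \leq q$), the desired inequality reduces to the triangle inequality for the $\ell_r$-norm in the index $j$ applied to the finite family $(|x^n_j|^s)_{n=1}^k$, so it is immediate. Applying Theorem \ref{thm somme E} to the constant sequence $X_n = X$ then yields $(\lambda+2+\varepsilon)$-HIC$_{s,d}$ for $\ell_q(X)$, and invoking Remark \ref{rmq KRI=K} once more converts this to the announced HC$_{s,w}$.

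Finally, the two James space examples are instances. The space $\J_p$ is quasi-reflexive and $p$-AUSable, hence satisfies upper $\ell_p$ tree estimates, so the general statement gives HC$_{s,w}$ for $\ell_q(\J_p)$. Symmetrically, $\J_p^*$ is quasi-reflexive and $p'$-AUS (where $p'$ is the conjugate exponent of $p$), hence satisfies upper $\ell_{p'}$ tree estimates, so applying the general statement with $p'$ in place of $p$ produces HC$_{t,w}$ for $\ell_q(\J_p^*)$ with $t = \min(p',q)$. The only real technical point in the whole argument is the $s$-convexity check for $\ell_q$ in the regime $s \leq q$; everything else is bookkeeping once the monotonicity of HIC$_{\cdot,d}$ in the exponent and the HC/HIC equivalence (Remark \ref{rmq KRI=K}) are both in hand.
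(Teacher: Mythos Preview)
Your argument is correct and is exactly the deduction the paper intends: the corollary is stated without proof, as an immediate consequence of Theorem \ref{thm somme E} together with Causey's result and the properties of $\J_p$, $\J_p^*$ recalled just before. Your steps---reducing $X$ from HIC$_{p,d}$ to HIC$_{s,d}$ by monotonicity in the exponent, checking that the unit vector basis of $\ell_q$ is $s$-convex with constant $1$ via the triangle inequality in $\ell_{q/s}$, applying Theorem \ref{thm somme E}, and specializing to the James spaces---constitute the natural filling-in of that omitted proof, and your reading of the notation HC$_{\cdot,w}$ as HC$_{\cdot,d}$ is consistent with the remark preceding the corollary.
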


Let us mention that we stated this corollary for $\ell_q$-sums but we could have done it with any reflexive $q$-convexification of a Banach space with a $1$-unconditional basis (such as $T_q$, the $q$-convexification of Tsirelson space, or $S_q$, the $q$-convexification of Schlumprecht space, see \cite{Braga1} and references therein).

With $p=2$, we get that the spaces $\ell_2(\J)$ and $\ell_2(\J^*)$ have property HC$_2$ and thus  cannot contain equi-Lipschitz copies of Hamming graphs. In fact, property HC$_p$ provides more information than an obstruction to the equi-Lipschitz embedding of Hamming graphs, it also gives us an estimation of some compression exponents, given by the result below. Before stating it, we need the following definition.

\begin{definition}
Let $q \in (1, \infty)$ and $X$ be a Banach space. We say that $X$ has the \textit{$q$-co-Banach-Saks property} if for every semi-normalized weakly null sequence $(x_n)_{n \in \N}$ in $X$, there exists a subsequence $(x'_n)_{n \in \N}$ of $(x_n)_{n \in \N}$ and $c > 0$ such that, for all $k \in \N$ and all $k \leq n_1 < \cdots < n_k$, we have
\[ \|x'_{n_1} + \cdots + x'_{n_k} \| \geq c k^{1/q} . \]
\end{definition}

\begin{theorem}
Let $1 < q < p$ in $(1, \infty)$. Assume $X$ is an infinite-dimensional Banach space with the $q$-co-Banach-Saks property and $Y$ is a Banach space with property HC$_p$. Then $X$ does not coarse Lipschitz embed into $Y$. More precisely, the compression exponent $\alpha_Y(X)$ of $X$ into $Y$ satisfies the following: \\
$(i)$ if $X$ contains an isomorphic copy of $\ell_1$, then $\alpha_Y(X) \leq \frac{1}{p}$; \\
$(ii)$ otherwise, $\alpha_Y(X) \leq \frac{q}{p}$. \\
In particular, if $X$ is $q$-AUC, then $\alpha_Y(X) \leq \frac{q}{p}$.
\end{theorem}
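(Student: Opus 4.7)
The argument is by contradiction, following Kalton--Randrianarivony. Suppose $\alpha_Y(X) > s/p$, where $s=1$ in case (i) and $s=q$ in case (ii); fix $\alpha' \in (s/p, \alpha_Y(X))$ and a coarse embedding $f : X \to Y$ with $\rho_f(t) \geq At^{\alpha'} - C$ and $\omega_f(t) \leq Bt + D$. For a bounded sequence $(x_n) \subset X$ with $M := \sup_n \|x_n\|$ (to be chosen case by case), define
\[ \Phi_k : ([\N]^k, \dH) \to Y, \qquad \Phi_k(\overline{n}) = f\!\left( \sum_{j=1}^k x_{n_j} \right). \]
Flipping one coordinate moves the input by at most $2M$, so $\Lip(\Phi_k) \leq 2MB + D$. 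Since HC$_p \Leftrightarrow$ HIC$_p$ (Proposition~\ref{prop KRI=K}), HC$_p$ for $Y$ yields some $\M_k \in [\N]^\omega$ with
\[ \|\Phi_k(\overline{n}) - \Phi_k(\overline{m})\|_Y \leq 2\lambda(2MB+D)\, k^{1/p} \qquad \text{for all } (\overline{n},\overline{m}) \in I_k(\M_k). \]
The plan is to exhibit, for each $k$, an interlaced pair in $I_k(\M_k)$ whose input displacement $\sum_j (x_{n_j} - x_{m_j})$ has norm $\geq c\, k^{1/s}$, with $c>0$ independent of $k$ and $\M_k$. Then $\rho_f$ forces $A(c k^{1/s})^{\alpha'} - C \leq 2\lambda(2MB+D)\, k^{1/p}$, and letting $k \to \infty$ contradicts $\alpha' > s/p$; sending $\alpha' \to \alpha_Y(X)$ then gives the theorem.

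For Case (i), pick $(x_n)$ equivalent to the $\ell_1$-basis with constant $c_0$: every disjoint pair (in particular every interlaced pair in any $\M_k$) has input displacement whose $\ell_1$-coefficients are $\pm 1$ on $2k$ disjoint indices, so its norm is at least $2c_0 k$, giving $s=1$. For Case (ii), Rosenthal's $\ell_1$-theorem together with the infinite dimensionality of $X$ produces a seminormalized weakly Cauchy sequence; taking consecutive differences yields a seminormalized weakly null sequence, which (after Bessaga--Pelczynski and the $q$-co-Banach-Saks hypothesis) I may assume is a basic weakly null sequence $(x_n)$ with $\|\sum_{i=1}^k x_{n_i}\|_X \geq c_0 k^{1/q}$ whenever $k \leq n_1 < \cdots < n_k$. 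Given any $\M_k = \{p_1 < p_2 < \cdots\}$, form the block basis $y_i := x_{p_{2i-1}} - x_{p_{2i}}$, which is basic, weakly null, and uniformly seminormalized with $\|y_i\|_X \in [\delta, 2M]$, where $\delta := \inf_n \|x_n\|_X / K$ depends only on $(x_n)$ (and on its basis constant $K$). The $q$-co-Banach-Saks hypothesis applied to $(y_i)$ produces $k \leq i_1 < \cdots < i_k$ with $\|\sum_j y_{i_j}\|_X \geq c'\, k^{1/q}$; the interlaced pair $\overline{n} = (p_{2i_j - 1})_j$, $\overline{m} = (p_{2i_j})_j$ then lies in $I_k(\M_k)$ and has input displacement exactly $\sum_j y_{i_j}$.

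The main obstacle is the uniformity of $c'$ in $\M_k$, since \emph{a priori} the $q$-co-Banach-Saks constant depends on the particular sequence $(y_i)$ to which it is applied. I would resolve this by first passing to a subsequence of $(x_n)$ that generates a spreading model $(e_n)$ in some Banach space $E$: this $(e_n)$ is $1$-suppression unconditional (Brunel--Sucheston, since $(x_n)$ is weakly null) and satisfies $\|e_1 + \cdots + e_k\|_E \geq c_0 k^{1/q}$. Via the spreading-model comparison, every block basis of the form $(y_i)$ above inherits a lower $\ell_q$-estimate on its same-sign sums with a constant depending only on $c_0$, $\delta$, and $K$ --- in particular independent of $\M_k$ --- which supplies the required uniform $c'$. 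Finally, for the ``in particular'' clause, a $q$-AUC Banach space automatically satisfies the $q$-co-Banach-Saks property (its asymptotic lower $\ell_q$-estimate on weakly null sums is strictly stronger than $q$-co-BS), so whichever of (i), (ii) is relevant applies and yields $\alpha_Y(X) \leq q/p$, using $1/p \leq q/p$ for $q \geq 1$ in the $\ell_1$-containing subcase.
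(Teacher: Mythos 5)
Your argument is correct and is essentially the proof the paper defers to (Theorem 3.5 and Corollary 3.6 of \cite{LR}, i.e.\ the Kalton--Randrianarivony scheme): the graph maps $\overline{n}\mapsto f\big(\sum_j x_{n_j}\big)$, the interlaced concentration bound from HC$_p\Leftrightarrow$HIC$_p$, and a uniform lower estimate on interlaced displacements obtained from the suppression-$1$-unconditional spreading model of the weakly null sequence, which is exactly how the cited proof secures a constant independent of $\M_k$. The only points to polish are cosmetic: take the differences along a norm-separated subsequence of the weakly Cauchy sequence so that they are genuinely seminormalized, and note that the spreading-model lower estimate only applies to interlaced pairs whose entries are large relative to $k$ --- which suffices, since the concentration bound holds for \emph{all} interlaced pairs in the infinite set $\M_k$ and you may therefore choose one far out.
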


We refer the reader to Theorem 3.5 and Corollary 3.6 of \cite{LR} for a proof of this result. 

Let us note that Proposition 3.2 of \cite{LR} also stays true by replacing ``quasi-reflexive AUS" by ``having property HC$_p$ for some $p \in (1, \infty)$". \\ 

We also would like to mention the following: we could define symmetric concentration properties \textit{SFC$_p$}, \textit{SIP$_p$} and \textit{SC$_p$}, corresponding respectively to properties HFC$_p$, HIC$_p$ and HC$_p$ by asking the function $f$ to be Lipschitz for the symmetric distance in the definitions of these properties (instead of being Lipschitz for the Hamming distance).
Then, it is known that a reflexive (resp. quasi-reflexive) $p$-AUS Banach space, for $p \in (1, \infty)$, would have property SFC$_p$ (resp. SC$_p$). Moreover, even though we wrote our properties HFC$_{p,d}$, HIC$_{p,d}$ and HC$_{p,d}$ with the letter ``H" because the quantities $\alpha_j$, $j \in \{1, \cdots, k \}$ can be seen as directional Lipschitz constants when $[\N]^k$ is endowed with the Hamming distance, we could replace ``$f : ([\N]^k, \dH) \to X$ Lipschitz" by ``$f : [\N]^k \to X$ bounded" in the definitions so that no reference to any specific metric is made. With that remark in mind and the fact that HC$_{p,d}$ implies SC$_p$, we get that property HC$_{p,d}$ prevents the equi-Lipschitz embeddings of the symmetric graphs.

Before concluding this subsection with a last result, let us recall some facts that we will use concerning the spaces $\operatorname{Ti}_q^*$, the dual of the $q$-convexification of the Tirilman space $\operatorname{Ti}$ (see \cite{tzafriri}, \cite{CS}, \cite{3.5} and references therein for more information about this space), and $S_q^*$, the dual of the $q$-convexification of Schlumprecht space $S$ (see \cite{schlumprecht_space}, \cite{3.5} and references therein for more information about this space). \\
If we denote  $(e_n^*)_{n \in \N}$ the coordinate functionals associated with the canonical basis $(e_n)_{n \in \N}$ of $\operatorname{Ti}$, it is known that $(e_n^*)_{n \in \N}$ is $1$-symmetric and that the formal identity $I : \ell_q \to \operatorname{Ti}_q$ is bounded and stricty singular (see \cite{3.5}). As for $S_q^*$, if we denote $(f_n^*)_{n \in \N}$ the sequence of coordinate functionals associated with the canonical basis $(f_n)_{n \in \N}$ of $S$ and if $p$ and $q$ are conjugate exponents, it is known (see \cite[Proposition 6.5 (iv)]{3.5}) that for any finite, non-empty subset $E$ of $\N$, 
\[ \Big\| \sum_{i \in E} f_i^* \Big\|_{S_q^*} \geq |E|^{1/p} \log_2(|E|+1)^{1/q} , \]
and that $(f_n^*)_{n \in \N}$ is $1$-subsymmetric.

\begin{prop}
Let $p \in (1, \infty)$, $q$ its conjugate exponent. \\
The space $\operatorname{Ti}_q^*$ has property HFC$_p$ but does not have property HC$_{p,d}$, and the space $S_q^*$ has property HFC$_{s,d}$ for every $s \in (1,p)$, but does not have property HC$_p$.
\end{prop}

\begin{proof}
This space $\operatorname{Ti}_q^*$ is reflexive and $\operatorname{Ti}_q^* \in \Np$ (see \cite{3.5} Proposition 6.5 (v)) hence it has property HFC$_p$. Now, for $a=(a_j)_{j=1}^k \in B_{\ell_p^k}$, we let $f : \left\lbrace \begin{array}{lll}
[\N]^k & \to & \operatorname{Ti}_q^* \\
\overline{n} & \mapsto & \sum_{j=1}^k a_j e_{n_j}^*
\end{array} \right.$. This map satisfies $\Lip(f) \leq 2$. \\
Let us assume $\operatorname{Ti}_q^*$ has property $\lambda$-HC$_{p,d}$ for some $\lambda>0$. Then, there exist $\overline{n}, \overline{m} \in [\N]^k$ such that $\nb \cap \mb = \varnothing$ and
\[ \Big\| \sum_{j=1}^k a_j e_j^* \Big\| = \|f(\overline{n})\| \leq \|f(\overline{n})-f(\overline{m})\| \leq 2 \lambda \]
because of the $1$-symmetry of $(e_j^*)_{j \in \N}$. We deduce that the sequence of coordinate functionals $(e_j^*)_{j \in \N}$ is dominated by the $\ell_p$ basis. This is impossible, by the same argument used by Causey \cite{3.5} to prove $\operatorname{Ti}_q^* \notin \Ap$ (by duality, the $\operatorname{Ti}_q$ basis would dominate, and would therefore be equivalent to the $\ell_q$ basis, contradicting the strict singularity of the formal inclusion $I : \ell_q \to \operatorname{Ti}_q$, cf. \cite[Proposition 6.5 (ii)]{3.5}). \\ 
Then, the space $S_q^*$ is reflexive and $S_q^* \in \Pp$ (see the proof of Theorems 6.2, 6.3, case $\xi=0$, and Remark 6.7 in \cite{3.5}) hence it has property HFC$_{s,d}$ for every $s \in (1,p)$. We define similarly, for $a=(a_j)_{j=1}^k \in B_{\ell_p^k}$, a $2$-Lipschitz map $f : \left\lbrace \begin{array}{lll}
[\N]^k & \to & S_q^* \\
\overline{n} & \mapsto & \sum_{j=1}^k a_j f_{n_j}^*
\end{array} \right.$. Now, we may argue as we did for $\operatorname{Ti}_q^*$ to deduce that, for all $(\nb,\mb) \in I_k(\N)$:
\[ \|f(\nb)-f(\mb)\| \geq k^{1/p} \log_2(k+1)^{1/q} \]
because of the $1$-subsymmetry of the canonical basis and \cite[Proposition 6.5 (iv)]{3.5}. Since $\lim_{k \to \infty} \log_2(k+1)^{1/q} = \infty$, the space $S_q^*$ cannot have property HC$_p$.
\end{proof}

\subsection{Related questions}

The following questions about Theorem \ref{thm somme E} come up naturally.

\begin{pb}
Can we replace property HFC$_{p,d}$ (resp. HIC$_{p,d}$) by property HFC$_p$ (resp. HIC$_p$) in Theorem \ref{thm somme E}?
\end{pb}

\begin{pb}
Can the conclusion of Theorem \ref{thm somme E} be improved so that $X= \left( \sum_{n \in \N} X_n \right)_E$ has property $(\lambda+\varepsilon)$-HIC$_{p,d}$ for every $\varepsilon > 0$?
\end{pb}

\begin{pb}
Let $p \in (1, \infty )$, $X$ a $p$-AUS Banach space so that $X$ is complemented in $X^{**}$ and that $X^{**} / X$ is reflexive and $p$-AUSable. Does $X$ have property HC$_p$?
\end{pb}

A positive answer to the second question would provide us, for each $p \in (1, \infty )$, with an example of a reflexive Banach space, not AUSable, with property HFC$_{p,d}$. Indeed, following Braga's proof of Theorem 7.1 \cite{Braga2}, the space $X_{p, \ell_1, T}$ would be such an example (see \cite{Braga2} and references therein for more information about this space). \\

Moreover, let us recall that Kalton proved the existence of a Banach space $X$ that is not $p$-AUSable but that is uniformly homeomorphic to a $p$-AUS Banach space (see \cite{kalton2013examples}). Thus, the space $X$ has property HFC$_{p,d}$, $p \in (1, \infty)$, even though it is not $p$-AUSable. However, the following problem remains open.

\begin{pb}
Is there a Banach space that has property HFC$_p$ (or HFC$_{p,d}$/HC$_p$/HC$_{p,d}$) without being AUSable? If a Banach space $X$ coarse Lipschitz embeds into a Banach space $Y$ that is reflexive and AUS, does it follow that $X$ is AUSable?
\end{pb}

We will finish this section by saying a few words about a natural class of spaces to study here: the Lindenstrauss spaces (see \cite{espLin}). For any Banach space $X$, we will note by $Z_X$ the Lindenstrauss space associated to $X$. In \cite{Braga2}, Braga showed that neither $Z_{c_0}^*$, $Z_{\ell_1}$ or $Z_{X_{\omega}^*}^*$ can have any of the concentration properties we introduced, even though they are $2$-AUSable (see \cite{CL}) and do not contain $c_0$ nor $\ell_1$. The key point of the proof for the spaces $Z_{c_0}^*$ and $Z_{X_{\omega}^*}^*$ is that they satisfy the assumptions of the following proposition, that can be deduced from \cite{Braga2}.

\begin{prop} \label{prop bidual Braga}
Let $X$ be a Banach space such that $X^*$ is separable. \\
Assume that there exist $A,C \geq 1$, $(z_{k,j,n}^{**})_{k \in \N, j \in \{1,\cdots,k\}, n \in \N} \subset C B_{X^{**}}$ such that for every $k \in \N$, the map \[ F_k : \left\lbrace \begin{array}{lll}
[\N]^k & \to & X^{**} \\
\overline{n} & \mapsto & \sum\limits_{j=1}^k z^{**}_{k,j,n_j}
\end{array}
\right.\] 
satisfies
\[ \dfrac{1}{A} \dH(\overline{n},\overline{m}) \leq \|F_k(\overline{n})-F_k(\overline{m})\| \leq A \dH(\overline{n},\overline{m}) \]
for all $\overline{n},\overline{m} \in [\N]^k$. \\
Then, the space X does not have any of the concentration properties introduced before.
\end{prop}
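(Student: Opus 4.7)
My plan is to produce, for every $k \in \N$, a genuine Lipschitz map $G_k : ([\N]^k, \dH) \to X$ that remains bi-Lipschitz with constants close to those of $F_k$; concretely, $\Lip(G_k) \leq A + 1$ and $\|G_k(\overline{n}) - G_k(\overline{m})\| \geq \tfrac{1}{A+1}\dH(\overline{n}, \overline{m})$ for all $\overline{n}, \overline{m} \in [\N]^k$. Once this is in hand, every concentration property introduced in Section $2$ collapses at once: for any two disjoint $\overline{n}, \overline{m} \in [\N]^k$ one has $\dH(\overline{n}, \overline{m}) = k$ and therefore $\|G_k(\overline{n}) - G_k(\overline{m})\| \geq k/(A+1)$, while for the directional variants one also has $\alpha_j \leq \Lip(G_k) \leq A+1$ for every $j \in \{1, \ldots, k\}$. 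Applied to $G_k$, the weakest of the properties, namely $\lambda$-HC$_p$, would force $k/(A+1) \leq \lambda (A+1)\, k^{1/p}$, which fails for $k$ large enough, for every $\lambda > 0$ and every $p \in (1, \infty]$.

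The construction of $G_k$ uses the separability of $X^*$ twice. First, $CB_{X^{**}}$ is weak$^*$-compact and metrizable, so a standard diagonal extraction on $\N$ produces an infinite $\M \subset \N$ along which $z^{**}_{k,j,n} \to y^{**}_{k,j}$ in the weak$^*$ topology, for every pair $(k,j)$. Since the differences $F_k(\overline{n}) - F_k(\overline{m})$ are unaffected when one replaces $z^{**}_{k,j,n}$ by $u^{**}_{k,j,n} = z^{**}_{k,j,n} - y^{**}_{k,j}$, after relabelling $\M = \N$ I may assume that each sequence $(u^{**}_{k,j,n})_n$ is weak$^*$-null and bounded by $2C$, while the bi-Lipschitz estimate is preserved for the associated map $F'_k(\overline{n}) = \sum_j u^{**}_{k,j,n_j}$.

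Second, I fix a dense sequence $(x^*_\ell)$ in $B_{X^*}$ and, by Goldstine's theorem combined with an inductive diagonal choice, select vectors $u_{k,j,n} \in X$ of norm at most $2C + \varepsilon$ that match $u^{**}_{k,j,n}$ arbitrarily well on longer and longer initial segments of $(x^*_\ell)$. Setting $G_k(\overline{n}) = \sum_j u_{k,j,n_j}$, the identity $\|v^{**}\| = \sup_\ell |\langle v^{**}, x^*_\ell\rangle|$ valid for every $v^{**} \in X^{**}$, together with the telescoping errors of the inductive approximation, allows the $F'_k$-bounds to be transferred to $G_k$ with an arbitrarily small loss in the bi-Lipschitz constant.

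The main obstacle is this last step: one must organise the Goldstine approximation so that a single sequence $(u_{k,j,n})$ controls every pair $(\overline{n}, \overline{m}) \in [\N]^k \times [\N]^k$ simultaneously, and not merely on a prescribed finite cube $[\{1,\ldots,N\}]^k$. Choosing $u_{k,j,n}$ at stage $n$ within $2^{-n}$ of $u^{**}_{k,j,n}$ on the initial functionals $x^*_1, \ldots, x^*_n$ makes the cumulative error on any given difference $G_k(\overline{n}) - G_k(\overline{m})$ a convergent geometric tail; a final weak$^*$-compactness extraction on $\N$ aligns the indices at which the suprema defining the relevant norms are effectively attained, and the bi-Lipschitz estimate for $G_k$ then follows with constant $A + 1$ as claimed, completing the contradiction.
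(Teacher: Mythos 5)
Your overall reduction is fine: if a genuinely bi-Lipschitz $G_k : ([\N]^k,\dH) \to X$ with constants independent of $k$ existed, every concentration property would indeed fail, since disjoint $\overline{n},\overline{m}$ satisfy $\dH(\overline{n},\overline{m})=k$. The gap is in the construction of $G_k$, and it is not a technicality. Your Goldstine approximants $u_{k,j,n}$ are chosen at stage $n$ to match $u^{**}_{k,j,n}$ only on the functionals $x^*_1,\dots,x^*_n$. To transfer the lower bound $\|F'_k(\overline{n})-F'_k(\overline{m})\|\geq \frac{1}{A}\dH(\overline{n},\overline{m})$ to $G_k$, you must evaluate $G_k(\overline{n})-G_k(\overline{m})$ against a functional $x^*_{\ell_0}$ that nearly norms $F'_k(\overline{n})-F'_k(\overline{m})$; but $\ell_0$ depends on the pair and can be arbitrarily large compared with $\min(n_1,m_1)$, in which case none of the approximants $u_{k,j,n_j},u_{k,j,m_j}$ is controlled on $x^*_{\ell_0}$, and each of the $2k$ error terms can be of order $C$ rather than $2^{-n_j}$. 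No ``final weak$^*$-compactness extraction on $\N$'' repairs this: restricting to an infinite $\M$ reproduces the same quantifier problem inside $\M$, because the nearly norming functional is determined only after the whole pair is known, while your approximants are frozen at the stage of their own index. Note also that a uniform bi-Lipschitz $G_k:[\N]^k\to X$ would, in the case $X=Z_{c_0}$, give an equi-Lipschitz embedding of the Hamming graphs into $Z_{c_0}$, which the paper explicitly records as an open question at the end of Section 3; this is a strong sign that your strengthened conclusion is not attainable by soft approximation.

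The paper (following Braga) circumvents exactly this obstruction by doubling the index set: it defines $f_k$ on $[\N]^{2k}$ by $f_k(\overline{n})=\sum_{j=1}^k z_{(n_j,n_{k+j})}$, where $(z_{(n,m)})_m$ is a Goldstine sequence weak$^*$-converging to $z^{**}_n$. The target difference $F_k(\overline{n})-F_k(\overline{m})$ is then determined by the first halves alone, so its nearly norming functional is fixed first, and the approximation indices (the second halves) are chosen afterwards, large enough for that particular functional; weak$^*$ lower semicontinuity of the norm then gives $\|f_k(\overline{n},\overline{n}')-f_k(\overline{m},\overline{m}')\|\geq \frac{1}{A}\dH(\overline{n},\overline{m})-\varepsilon$ only for $\overline{n}',\overline{m}'$ beyond some threshold depending on $(\overline{n},\overline{m})$. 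This eventual lower bound is weaker than bi-Lipschitzness but suffices: by the Ramsey reformulation of HIC$_p$ (and the equivalence of HC$_p$ with HIC$_p$ from Proposition \ref{prop KRI=K}), one only needs that every infinite $\M$ contains one interlaced pair at distance $\geq 2k-1$, and such a pair can always be taken with its second halves far out. If you want to keep your single-index formulation, you must either restrict to pairs whose norming functionals you have already secured (which is what the doubling does implicitly) or give a genuinely new argument; as written, the lower estimate for $G_k$ is unproved and, in all likelihood, false.
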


We can therefore ask ourselves the following question.

\begin{pb}
Can we find an infinite-dimensional Banach space $X$ and a $p \in (1, \infty )$ such that $Z_X$ or $Z_X^*$ have property HC$_p$?
\end{pb}

Finally, by Aharoni's Theorem \cite{Aharoni}, we know that the Hamming graphs equi-Lipschitz embed into $Z_{c_0}^{**}/Z_{c_0}$. Does it mean that these graphs can be Lipschitz embedded into $Z_{c_0}^{**}$? Into $Z_{c_0}$? 

\section{Asymptotic-$c_0$ spaces} \label{section asympt}

Before stating the last result of this paper, we recall the definition of an asymptotic-$c_0$ space. The following definition is due to Maurey, Milman and Tomczak-Jaegermann \cite{maurey1995asymptotic}.

\begin{definition}
Let $X$ be a Banach space. We denote by $\cof(X)$ the set of all its closed finite-codimensional subspaces. \\
For $C \geq 1$, we say that $X$ is \textit{$C$-asymptotically $c_0$} if, for any $k \in \N$, we have 
\begin{align*}
& \exists X_1 \in \cof(X) \hspace{1mm} \forall x_1 \in S_{X_1} \hspace{1mm} \exists X_2 \in \cof(X) \hspace{1mm} \forall x_2 \in S_{X_2} \hspace{1mm} \cdots \hspace{1mm} \exists X_k \in \cof(X) \hspace{1mm} \forall x_k \in S_{X_k}, \\ 
& \hspace{1.8cm} \forall (a_1, \dots, a_k) \in \R^k,  \Big\| \sum_{i=1}^k a_i x_i \Big\| \leq C \max_{1 \leq i \leq k} |a_i |
\end{align*}
We say that $X$ is \textit{asymptotically $c_0$} (or asymptotic-$c_0$) if it is $C$-asymptotically $c_0$ for some $C \geq 1$.
\end{definition}

Let $X$ be a Banach space. A family $(x_j^{(i)}; i,j \in \N) \subset X$ is called an \textit{infinite array}. For an infinite array $(x_j^{(i)}; i,j \in \N)$, we call the sequence $(x_j^{(i)})_{j \in \N}$ the \textit{$i$-th row of the array}. We call an array \textit{weakly null} if all rows are weakly null. A \textit{subarray} of $(x_j^{(i)} ; i,j \in \N)$ is an infinite array of the form $(x_{j_s}^{(i)}; i,s \in \N)$, where $(j_s) \subset \N$ is a subsequence. Thus, for a subarray, we are taking the same subsequence in each row. 

The following notion, introduced by Halbeisen and Odell (\cite{MR2041794}), is a generalization of spreading models.

\begin{definition}
A basic sequence $(e_i)_{i \in \N}$ is called an \textit{asymptotic model} of a Banach space $X$ if there exist an infinite array $(x_j^{(i)} ; i,j \in \N) \subset S_X$ and a null-sequence $(\varepsilon_n)_{n \in \N} \subset (0,1)$, so that, for all $n \in \N$, all $(a_i)_{i=1}^n \subset [-1,1]$ and all $n \leq k_1 < k_2 < \cdots < k_n$,
\[ \left| \| \sum_{i=1}^n a_i x^{(i)}_{k_i} \| - \| \sum_{i=1}^n a_i e_i \| \right| < \varepsilon_n . \]
\end{definition}

The following proposition concerning this notion was proved in \cite{MR2041794}.

\begin{prop}[\cite{MR2041794}, Proposition 4.1 and Remark 4.7.5]
Assume that $(x_j^{(i)} ; i,j \in \N )\subset S_X$ is an infinite array, all of whose rows are normalized and weakly null. Then there is a subarray of $(x_j^{(i)} ; i,j \in \N)$ which has a $1$-suppression-unconditional asymptotic model $(e_i)_{i \in \N}$.
\end{prop}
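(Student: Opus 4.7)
The plan has two parts: first extract a subarray admitting an asymptotic model $(e_i)$ via a diagonal Ramsey argument, then use the weak nullness of each row together with a Hahn--Banach argument to upgrade $(e_i)$ to be $1$-suppression unconditional (which in particular makes $(e_i)$ a basic sequence).

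For the extraction I would fix a countable dense set $D$ of rational tuples in $\bigsqcup_n [-1,1]^n$. For each $v=(a_i)_{i=1}^n\in D$ and each $\varepsilon>0$, Ramsey's theorem applied to the partition of $[\N]^n$ into finitely many $\varepsilon$-buckets of the map $\overline{k}\mapsto \|\sum a_i x^{(i)}_{k_i}\|$ yields an infinite column subset on which this norm is $\varepsilon$-constant. A diagonalization over $(v,\varepsilon)$ produces an infinite $\M\subseteq\N$ such that $\lim \|\sum a_i x^{(i)}_{k_i}\|$ exists for every $v\in D$ as $\overline{k}\in[\M]^n$ with $\min\overline{k}\to\infty$; extending continuously to real coefficients defines the candidate asymptotic model $(e_i)$.

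For $1$-suppression unconditionality, iterating single-index removals reduces matters to showing that for every $n$, every $(a_i)_{i=1}^n$, and every $l\in\{1,\ldots,n\}$, $\|\sum_{i\neq l} a_i e_i\|\leq \|\sum_{i=1}^n a_i e_i\|$. Fix $\varepsilon>0$. The key construction is to first choose $(k_i)_{i\neq l}$ in $\M$, large and with $k_{l+1}$ much bigger than $k_{l-1}$; next pick a norming functional $x^*\in S_{X^*}$ of $\sum_{i\neq l} a_i x^{(i)}_{k_i}$; and finally, invoking weak nullness of row $l$ against the (now fixed) $x^*$, choose $k_l\in (k_{l-1},k_{l+1})\cap\M$ with $|x^*(x^{(l)}_{k_l})|<\varepsilon/(|a_l|+1)$. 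The identity
\[ \Bigl\|\sum_{i\neq l} a_i x^{(i)}_{k_i}\Bigr\| = x^*\Bigl(\sum_i a_i x^{(i)}_{k_i}\Bigr) - a_l\, x^*(x^{(l)}_{k_l}) \leq \Bigl\|\sum_i a_i x^{(i)}_{k_i}\Bigr\| + \varepsilon \]
then transfers, via the asymptotic-model approximation at $\overline{k}$ and at $\overline{k}$ with the $l$-th coordinate suppressed, to $\|\sum_{i\neq l} a_i e_i\|\leq \|\sum a_i e_i\|+3\varepsilon$; letting $\varepsilon\to 0$ concludes.

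The main obstacle is ensuring the window $(k_{l-1},k_{l+1})\cap\M$ is wide enough to accommodate the weak-nullness threshold of $x^*$: since $x^*$ itself depends on $k_{l+1}$, enlarging $k_{l+1}$ shifts $x^*$ and may push the threshold further out, producing a circularity. I would address this either by enforcing a rapid growth condition on the enumeration of $\M$ during the Ramsey extraction---so that, whatever $x^*$ arises, the tail of the window inside $\M$ is uniformly large enough---or by passing to a weak$^*$-accumulation point $x^{**}$ of the functionals $x^*$ as $k_{l+1}$ varies along a subsequence, applying weak nullness of row $l$ against the fixed $x^{**}$, and then transferring back via standard weak$^*$ perturbation on the finite-dimensional span generated by the frozen coordinates. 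Either route completes the proof.
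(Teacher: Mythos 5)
This proposition is not proved in the paper: it is quoted verbatim from Halbeisen and Odell \cite{MR2041794}, so there is no in-paper proof to compare against. Your outline follows the standard argument from that reference (Ramsey extraction of the model along a diagonal subsequence of columns, then weak nullness plus Hahn--Banach for suppression unconditionality), and you have correctly isolated the one genuine difficulty: the norming functional $x^*$ of $\sum_{i\neq l}a_i x^{(i)}_{k_i}$ depends on $k_{l+1},\dots,k_n$, while the index $k_l$ at which weak nullness of row $l$ must be exploited is forced to lie below $k_{l+1}$. Two routine points you should still record: when transferring the suppressed sum through the asymptotic model, read $\sum_{i\neq l}a_i x^{(i)}_{k_i}$ as the full admissible tuple $(k_1,\dots,k_n)$ with coefficient $0$ in position $l$ (the model pairs row $i$ with $e_i$, so one cannot re-index to a tuple of length $n-1$); and it is the suppression inequality $|a_l|\leq\|\sum_i a_i e_i\|$, obtained by deleting all indices but $l$, that shows a posteriori that the limiting functional is a norm and $(e_i)$ is a (monotone) basic sequence, which the definition of asymptotic model requires.

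Of your two proposed resolutions of the circularity, only the second works, and you should commit to it. No growth condition on $\M$ can rescue the first: weak nullness of row $l$ gives no rate that is uniform over $x^*\in B_{X^*}$, and in general a finite window $W\subset\M$, however many elements it contains, may carry a norm-one functional with $|x^*(x^{(l)}_j)|\geq 1/2$ for every $j\in W$ (take row $l$ to be the unit vector basis of the Schreier space and $x^*=\sum_{j\in W}e_j^*$ for an admissible $W$); since the adversarial norming functional is only known to lie in $B_{X^*}$, a window fixed before $x^*$ is known can never be guaranteed to contain a good $k_l$. The second route is the standard fix and does close the gap, provided you phrase it correctly: freeze $k_1<\dots<k_{l-1}$, let $x^*_m$ norm $u_m=\sum_{i<l}a_ix^{(i)}_{k_i}+\sum_{i>l}a_ix^{(i)}_{t^{(m)}_i}$ for choices $t^{(m)}_{l+1}<\dots<t^{(m)}_n$ in $\M$ with $t^{(m)}_{l+1}\to\infty$, and let $y^*$ be a weak$^*$ cluster point of $(x^*_m)$ in $B_{X^*}$ --- note it lives in $X^*$, not in $X^{**}$ as your notation $x^{**}$ suggests, and it must, since weak nullness of row $l$ is only usable against elements of $X^*$. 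This produces a single $j_0\in\M$ with $j_0>k_{l-1}$ and $|y^*(x^{(l)}_{j_0})|$ small; because $x^{(l)}_{j_0}$ is now one fixed vector, infinitely many $m$ satisfy $|x^*_m(x^{(l)}_{j_0})|$ small, and among those one picks $m$ with $t^{(m)}_{l+1}>j_0$, so that $k_l=j_0$ completes an admissible tuple and $\|\sum_{i\neq l}a_ix^{(i)}_{k_i}\|=x^*_m(u_m)\leq\|\sum_{i=1}^n a_ix^{(i)}_{k_i}\|+|a_l|\,|x^*_m(x^{(l)}_{j_0})|$. With that route made explicit (and the first one discarded), the proof is complete.
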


We call a basic sequence $(e_i)_{i \in \N}$ \textit{$c$-suppression-unconditional}, for some $c \geq 1$, if, for any $(a_i)_{i \in \N} \subset c_{00}$ and any $A \subset \N$, we have :
\[ \Big\| \sum_{i \in A} a_i e_i \Big\| \leq c \Big\| \sum_{i=1}^{\infty} a_i e_i \Big\| . \]
Note that a $c$-unconditional basic sequence is $c$-suppression-unconditional and a $c$-suppression-unconditional basic sequence is $2c$-unconditional.

As for the proof of the fact that every Banach space with property HFC$_{\infty}$ is asymptotic-$c_0$ (see \cite{Baudier2020ANC}), the key ingredient will be the following theorem of Freeman, Odell, Sari and Zheng.

\begin{theorem} [\cite{MR3841837}, Theorem 4.6] \label{key theorem}
If a separable Banach space $X$ does not contain any isomorphic copy of $\ell_1$ and all the asymptotic models generated by normalized weakly null arrays are equivalent to the $c_0$ unit vector basis, then $X$ is asymptotically $c_0$.
\end{theorem}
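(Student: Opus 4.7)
The natural approach is by contradiction: assuming $X$ is not asymptotic-$c_0$, I would use the failure of the Maurey--Milman--Tomczak-Jaegermann game to manufacture a normalized weakly null array whose asymptotic model is not equivalent to the $c_0$ basis, contradicting the hypothesis. By negating the asymptotic-$c_0$ definition, for every $C \geq 1$ there is a length $k$ for which the game fails: for every $X_1 \in \cof(X)$ one can select $x_1 \in S_{X_1}$, for every $X_2 \in \cof(X)$ one can select $x_2 \in S_{X_2}$, and so on, with the resulting tuple admitting coefficients $(a_i)$ such that $\|\sum a_i x_i\| > C \max_i |a_i|$. Fix a decreasing sequence $(Y_j)_j \subset \cof(X)$ with $\bigcap_j Y_j = \{0\}$ (available by separability after a standard reduction). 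Running the adversarial strategy along this sequence, for each row index $i \in \N$ build $(y_j^{(i)})_j \subset S_X$ with $y_j^{(i)} \in Y_j$, together with tuples witnessing game failure at constants $C_n \to \infty$ and growing lengths.

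Since $X$ does not contain $\ell_1$, Rosenthal's $\ell_1$-theorem permits a diagonal extraction making every row of the array weakly Cauchy. Replacing each row by a normalized difference sequence produces an array $(x_j^{(i)})_{i,j} \subset S_X$ with normalized weakly null rows, preserving, up to a uniform constant, the property that certain combinations along columns have large norm relative to $\max_i |a_i|$. The Halbeisen--Odell proposition recalled in the excerpt then produces a subarray of $(x_j^{(i)})$ admitting a $1$-suppression unconditional asymptotic model $(e_i)$. By hypothesis $(e_i)$ is equivalent to the $c_0$ basis, so there is $K \geq 1$ with $\|\sum a_i e_i\| \leq K \max_i |a_i|$ for every finitely supported scalar sequence.

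To obtain the contradiction, fix $C := K+2$ and the associated game length $k$; realize a witnessing tuple inside the retained subarray by choosing column indices $j_1 < \cdots < j_k$ so large that the asymptotic-model error $\varepsilon_k$ is negligible. Then
\[ (K+2) \max_i |a_i| < \left\| \sum_{i=1}^k a_i x_{j_i}^{(i)} \right\| \leq \left\| \sum_{i=1}^k a_i e_i \right\| + \varepsilon_k \leq K \max_i |a_i| + \varepsilon_k, \]
which is absurd for $\varepsilon_k$ small. The main obstacle is synchronizing the asymptotic game with the subarray passages performed in the Halbeisen--Odell step: one must arrange the game play so that a bad tuple remains available after the Ramsey-type diagonalizations that produce $(e_i)$. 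I would resolve this by interleaving the two inductive constructions, playing the game \emph{inside} the successive subarrays produced during the extraction of the asymptotic model, along a very sparse index sequence. A secondary issue is ensuring that replacement by difference sequences does not destroy the witness; this uses that the game witnesses are linear combinations, so splitting into differences only multiplies the relevant estimates by a universal constant, absorbed by $C_n \to \infty$.
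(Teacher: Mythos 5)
First, a point of context: the paper does not prove this statement at all --- it is imported verbatim as Theorem 4.6 of \cite{MR3841837} (Freeman--Odell--Sari--Zheng) and used as a black box in the proof of Theorem \ref{HC infty implique asc0}. So there is no internal proof to compare against; your proposal has to stand on its own as a proof of the cited theorem. Your high-level plan (argue the contrapositive: failure of the asymptotic-$c_0$ game yields a normalized weakly null array whose asymptotic model is not equivalent to the $c_0$ basis, then invoke Halbeisen--Odell and the hypothesis) is indeed the correct skeleton, and the final contradiction display is sound. But the two steps you flag as ``obstacles'' are not technicalities to be absorbed; they are precisely where the content of the theorem lives, and neither is resolved by what you wrote.

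First, the structural mismatch between games and arrays. A winning strategy for the vector player produces, at stage $i$, a vector depending on the entire history of the play --- on the actual vectors $x_1,\dots,x_{i-1}$ already chosen, not merely on their positions. An array, by contrast, commits to the whole $i$-th row in advance, and the asymptotic-model estimate must hold for \emph{every} admissible selection $j_1<\dots<j_k$ of column indices across rows. A single run of the game populates one entry per row and its witness is tied to that particular tuple; ``interleaving the two inductive constructions'' and ``playing the game inside successive subarrays'' does not explain how witnesses from different runs remain valid after the Ramsey-type subarray passages, nor how the history-dependence is eliminated. This tree-versus-array tension is exactly what the several-page argument in \cite{MR3841837} is designed to overcome (together with a uniformity statement for the equivalence constants), and your sketch leaves it open. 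Second, the Rosenthal/difference-sequence reduction is actually wrong as stated: if $u_i, v_i$ are consecutive terms of a weakly Cauchy row and the witness gives $\bigl\| \sum_i a_i u_i \bigr\| > C \max_i |a_i|$, the triangle inequality only yields $\bigl\| \sum_i a_i (u_i - v_i) \bigr\| \geq \bigl\| \sum_i a_i u_i \bigr\| - \bigl\| \sum_i a_i v_i \bigr\|$, and the subtracted term is a priori only bounded by $k \max_i |a_i|$, which swamps $C$ (necessarily $C < k$ at game length $k$). So normalized differences need not inherit the lower estimate, and the claim that the loss is ``a universal constant absorbed by $C_n \to \infty$'' is false. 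Note also that without $X^*$ separable (which does not follow from $\ell_1 \not\hookrightarrow X$), vectors deep in a filtration of finite-codimensional subspaces need not be close to weakly null, so some version of this passage is unavoidable and must be done carefully. As it stands, the proposal is an accurate outline of the strategy of \cite{MR3841837} but not a proof.
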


We now have all the tools to prove our result.

\begin{theorem} \label{HC infty implique asc0}
If a Banach space has property HC$_{\infty}$, then it is asymptotic-$c_0$.
\end{theorem}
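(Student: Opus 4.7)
The plan is to apply Theorem~\ref{key theorem}; for this we need to verify that (a) $X$ contains no isomorphic copy of $\ell_1$ and (b) every asymptotic model $(e_i)$ of $X$ generated by a normalized weakly null array is equivalent to the unit vector basis of $c_0$.

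Step (a) will be an easy contradiction. The map $\bar n \mapsto \sum_{j=1}^{k} e_{n_j}$ from $([\N]^k, \dH)$ into $\ell_1$ has Lipschitz constant $2$ and sends disjoint $k$-tuples to vectors at distance $2k$; hence the Hamming graphs equi-Lipschitz embed into $\ell_1$, and thus into any Banach space containing $\ell_1$. Property $\lambda$-HC$_\infty$ applied to such an equi-Lipschitz family $f_k:[\N]^k\to X$ would yield a disjoint pair with image-distance at most $\lambda\Lip(f_k)$, contradicting the lower bound of order $k$ coming from the bi-Lipschitz embedding.

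For step (b), we fix a normalized weakly null array $(x^{(i)}_j)$ generating an asymptotic model $(e_i)$, which we may assume to be $1$-suppression unconditional after a subarray passage. A standard splitting of coefficients into positive and negative parts together with a level-set integration will show that $c_0$-equivalence of $(e_i)$ is equivalent to $\sup_{n,\,\epsilon \in \{\pm 1\}^n} \|\sum_{i=1}^n \epsilon_i e_i\| < \infty$, so it is enough to rule out the unbounded case. Assuming this supremum is infinite, for each large $n$ we pick signs $\epsilon=\epsilon^{(n)}$ with $\|\sum_i \epsilon_i e_i\|$ large and introduce $f : ([\N]^n, \dH) \to X$ by $f(\bar k) = \sum_{i=1}^n \epsilon_i x^{(i)}_{k_i}$. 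A single coordinate change alters only one normalized vector, so $\Lip(f) \leq 2$, and Proposition~\ref{prop KRI=K} (HC$_\infty \Leftrightarrow$ HIC$_\infty$) yields an infinite $M \subset \N$ such that $\|f(\bar k) - f(\bar k')\| \leq 4\lambda$ for every $(\bar k, \bar k') \in I_n(M)$.

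The heart of the argument will be to produce a matching lower bound. To this end, we introduce a doubled weakly null array $(y^{(i')}_\ell)_{i'=1}^{2n}$ with $y^{(2i-1)}_\ell = x^{(i)}_{a_i(\ell)}$ and $y^{(2i)}_\ell = x^{(i)}_{b_i(\ell)}$, where the injections $a_i, b_i : \N \to M$ are chosen so that $a_i(\ell) < b_i(\ell) < a_{i+1}(\ell') < b_{i+1}(\ell')$ for all $i,\ell,\ell'$; this is possible because $M$ is infinite. After a further subarray passage, this array has a $1$-suppression unconditional asymptotic model $(\tilde e_{i'})_{i'=1}^{2n}$; unwinding the two successive asymptotic-model limits identifies $(\tilde e_{2i-1})_{i=1}^n$ isometrically with $(e_i)_{i=1}^n$ (and similarly for the even indices). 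Choosing $\ell_1 < \cdots < \ell_{2n}$ large, the pair $(\bar k, \bar k') \in I_n(M)$ defined by $k_i = a_i(\ell_{2i-1})$ and $k'_i = b_i(\ell_{2i})$ will satisfy
\[
\|f(\bar k) - f(\bar k')\| \;\approx\; \Bigl\|\sum_{i=1}^n \epsilon_i(\tilde e_{2i-1} - \tilde e_{2i})\Bigr\| \;\geq\; \Bigl\|\sum_{i=1}^n \epsilon_i \tilde e_{2i-1}\Bigr\| \;=\; \Bigl\|\sum_{i=1}^n \epsilon_i e_i\Bigr\|,
\]
where the middle inequality is the $1$-suppression unconditionality of $(\tilde e_{i'})$. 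Combined with the HIC$_\infty$ bound this will force $\|\sum_i \epsilon_i e_i\| \leq 4\lambda$ up to a vanishing error, contradicting the unboundedness assumption. The most delicate point of the proof will be arranging the doubled array and the successive subarray passages so that the two asymptotic-model limits can be realized simultaneously along an interlaced pair in $M$.
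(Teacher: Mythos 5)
Your overall architecture is the same as the paper's: rule out $\ell_1$, invoke Theorem~\ref{key theorem}, and derive a contradiction by pitting the HIC$_\infty$ upper bound on interlaced pairs against a lower bound coming from an asymptotic model of a ``doubled'' array. (The paper works with the alternating sums $\lambda_k=\|\sum_{i\le k}(-1)^ie_i\|$ and a map $f(\mb)=\frac12\sum_i x^{(i)}_{m_i}$ without signs, letting the interlacing itself produce the signs; your variant with arbitrary signs $\epsilon$ and suppression unconditionality to delete the even-indexed part is an equivalent reduction.)

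However, the step you yourself flag as the heart of the argument is broken as written. You require injections $a_i,b_i:\N\to\M$ with $a_i(\ell)<b_i(\ell)<a_{i+1}(\ell')<b_{i+1}(\ell')$ for \emph{all} $\ell,\ell'$. The condition $b_i(\ell)<a_{i+1}(\ell')$ for all $\ell,\ell'$ forces $\sup_\ell b_i(\ell)\le\inf_{\ell'}a_{i+1}(\ell')<\infty$, so $b_i$ would be an injection of $\N$ into a bounded set; no such maps exist, whatever the infinite set $\M$ is. So the doubled array, and with it the claimed identification of $(\tilde e_{2i-1})$ with $(e_i)$ and the lower bound along an interlaced pair, is not constructed. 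The repair is both standard and simpler, and is what the paper implicitly does: write $\M=\{p_1<p_2<\cdots\}$ and take the doubled array $y^{(2i-1)}_\ell=y^{(2i)}_\ell=x^{(i)}_{p_\ell}$, i.e.\ literally repeat each row (a single index variable per column, no per-row injections). Pass to a subarray (one common subsequence $(\ell_s)$) with a $1$-suppression unconditional asymptotic model $(\tilde e_{i'})_{i'=1}^{2n}$; for $s_1<\cdots<s_{2n}$ large, setting $k_i=p_{\ell_{s_{2i-1}}}$ and $k'_i=p_{\ell_{s_{2i}}}$ gives an interlaced pair in $\M$ whose column indices meet the rows $1,2,\dots,2n$ in increasing order, so that $f(\overline k)-f(\overline k')\approx\sum_i\epsilon_i(\tilde e_{2i-1}-\tilde e_{2i})$, and the restriction of $(\tilde e_{i'})$ to the odd rows is isometrically $(e_i)$ because those rows, evaluated at large increasing indices, realize the original asymptotic model. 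With that substitution the rest of your outline (the $4\lambda$ upper bound from $\lambda$-HC$_\infty\Rightarrow 2\lambda$-HIC$_\infty$ and $\Lip(f)\le 2$, versus the unbounded lower bound) goes through and matches the paper's argument.
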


\begin{proof}
Let $X$ be a Banach space with property HC$_{\infty}$. Then $X$ has property $\lambda$-HIC$_{\infty}$, for some $\lambda > 0$, by Proposition \ref{prop KRI=K}. Let us note that we can assume that $X$ is separable by Proposition 11 of \cite{DK2}, and that $X$ cannot contain an isomorphic copy of $\ell_1$ since $\ell_1$ does not have this property. \\
Assume by contradiction that $X$ is not asymptotic-$c_0$. By Theorem \ref{key theorem}, there exists a $1$-suppression-unconditional sequence $(e_i)_{i \in \N}$ that is not equivalent to the unit vector basis of $c_0$, and hence $\lambda_k=  \Big\| \sum_{i=1}^{k} (-1)^i e_i \Big\| \nearrow \infty$, if $k \nearrow \infty$, and that is generated as an asymptotic model of a normalized weakly null array $(x_j^{(i)} ; i,j \in \N)$ in $X$. Let $k \in \N$ such that $\dfrac{\lambda_{2k}}{4} > \lambda$, and $\delta=\dfrac{\lambda_{2k}}{2}$. After passing to appropriate subsequences of the array, we may assume that, for any $2k \leq j_1 < \cdots < j_{2k}$ and any $a_1, \dots, a_{2k} \in [-1,1]$, we have
\begin{align}
\left| \Big\| \sum_{i=1}^{2k} a_i x_{j_i}^{(i)} \Big\|- \Big\| \sum_{i=1}^{2k} a_i e_i \Big\| \right| < \delta .
\end{align}
Define now $f(\overline{m})=\frac{1}{2} \sum_{i=1}^k x_{m_i}^{(i)}$ for $\overline{m}=(m_1, \dots, m_{k}) \in [\N]^{k}$. Note that $f$ is $1$-Lipschitz for the metric $d_{\mathbb{H}}$. \\
Let $\M \in [\N]^{\omega}$, $\overline{n}, \overline{m} \in [\M]^k$ such that $((n_1, \dots, n_k), (m_1, \dots, m_k)) \in I_k(\M)$ and $m_1, n_1 > 2k$. \\
Using the Hahn-Banach Theorem, we can find $x^* \in S_{X^*}$ such that :
\[ x^* \left( \sum\limits_{i=1}^k x_{m_i}^{(i)} - \sum\limits_{i=1}^k x_{n_i}^{(i)} \right) = \Big\| \sum\limits_{i=1}^{k} x_{m_i}^{(i)} - \sum\limits_{i=1}^{k} x_{n_i}^{(i)} \Big\| . \]
Using equation $(1)$, we deduce that :
\[ \|f(m_1, \dots, m_k)-f(n_1, \dots, n_k) \| \geq \frac{1}{2} \lambda_{2k}-\frac{\delta}{2} = \frac{\lambda_{2k}}{4} > \lambda . \]
This contradicts the assumption on $X$. The result follows.
\end{proof}

Theorem \ref{thm somme E} and Corollary \ref{cor thm somme E} provided us with examples of non-quasi-reflexive Banach spaces having property HC$_p$, for $p \in (1, \infty)$. In order to obtain a similar result with $p= \infty$, it seems natural to consider a $T^*$-sum of spaces with $\lambda$-HFC$_{\infty}$, $\lambda > 0$. However, as a direct consequence of Theorem \ref{HC infty implique asc0}, we get the following corollary.

\begin{corollary} \label{cor sum T*}
The space $T^*(T^*)$, where $T^*$ is the original Banach space constructed by Tsirelson in \cite{Tsirelson}, does not have property HC$_{\infty}$ although it has property HFC$_{p,d}$ for every $p \in (1, \infty)$.
\end{corollary}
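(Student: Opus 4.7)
The plan is to handle the two assertions separately.

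\textbf{Failure of HC$_\infty$.} Since $T^*(T^*)$ is separable, Theorem \ref{HC infty implique asc0} reduces the task to showing that $T^*(T^*)$ is not asymptotic-$c_0$. Let $(f_n)_{n \in \N}$ be the canonical basis of the outer $T^*$ in the decomposition $T^*(T^*) = (\sum_n T^*)_{T^*}$, and, for each $n \in \N$, let $Y_n \subset T^*(T^*)$ be the $n$-th inner copy of $T^*$, namely the (infinite-dimensional) subspace of vectors supported on the $n$-th coordinate. Since $T^*$ contains no isomorphic copy of $c_0$ and $(f_n)$ is $1$-unconditional, $\|\sum_{i=1}^k f_i\|_{T^*} \to \infty$. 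Fix $C \geq 1$ and choose $k$ with $\|\sum_{i=1}^k f_i\|_{T^*} > C$. Given any cofinite subspaces $X_1, \dots, X_k \in \cof(T^*(T^*))$ chosen adaptively (as in the asymptotic-$c_0$ game), the intersection $X_i \cap Y_i$ has finite codimension in $Y_i$ and therefore contains a unit vector $x_i$, supported only on the $i$-th coordinate. Because the $x_i$'s live in pairwise disjoint coordinates,
\[
\Big\|\sum_{i=1}^k x_i\Big\|_{T^*(T^*)} = \Big\|\sum_{i=1}^k f_i\Big\|_{T^*} > C,
\]
so these vectors in $S_{X_i}$ violate the $C$-asymptotic-$c_0$ inequality with all coefficients equal to $1$. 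As $C$ was arbitrary, $T^*(T^*)$ is not asymptotic-$c_0$, and Theorem \ref{HC infty implique asc0} rules out HC$_\infty$.

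\textbf{Property HFC$_{p,d}$ for every $p \in (1,\infty)$.} I would apply Theorem \ref{thm somme E} in its HFC$_{p,d}$-variant (indicated right after the proof of the theorem) with $X_n = T^*$ for all $n \in \N$ and $E = T^*$. Two ingredients are needed: (i) each inner $T^*$ has property HFC$_{p,d}$ for every $p \in (1,\infty)$, which follows from the classical fact $T^* \in \bigcap_{r \in (1,\infty)} \operatorname{T}_r$ combined with the Kalton--Randrianarivony reformulation recalled in the paper (reflexive $p$-AUS implies HFC$_{p,d}$); and (ii) the outer $T^*$-basis is normalized, $1$-unconditional, and $p$-convex with convexity constant $1$ for every such $p$. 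After possibly passing to an equivalent norm on the outer $T^*$ (which changes $T^*(T^*)$ only up to isomorphism and preserves HFC$_{p,d}$), one can arrange (ii) by unwinding the recursive definition of the Tsirelson norm and exploiting its admissible-averages upper bound.

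\textbf{Main obstacle.} The non-routine point is (ii): securing $p$-convexity with convexity constant $1$ for the outer $T^*$-basis, or passing to an equivalent norm that does so. If the sharp constant $1$ cannot be obtained even after renorming, I would fall back to inspecting the proof of Theorem \ref{thm somme E} and tracking a worse convexity constant $C \geq 1$ through the estimate, absorbing it into the final HFC$_{p,d}$ constant, which still delivers the qualitative conclusion. The game-theoretic obstruction in the first part, by contrast, is clean once one observes that every cofinite subspace of $T^*(T^*)$ intersects each infinite-dimensional inner copy $Y_n$ cofinitely, and that the norm of a sum of vectors living in distinct inner copies reduces to the outer $T^*$-norm of a sum of basis vectors $f_i$.
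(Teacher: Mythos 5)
Your first half is correct but proved differently from the paper: where the paper simply invokes Lemma 2.7 of \cite{baudier2020geometry} to get that $T^*(T^*)$ is not asymptotic-$c_0$ and then applies Theorem \ref{HC infty implique asc0}, you give a direct game-theoretic argument. That argument is sound: every $X_i\in\cof(T^*(T^*))$ meets the $i$-th inner copy $Y_i$ in a finite-codimensional (hence nonzero) subspace, a unit vector $x_i$ chosen there is supported on the $i$-th outer coordinate, and disjointness of supports gives $\|\sum_{i=1}^k x_i\|=\|\sum_{i=1}^k f_i\|_{T^*}$, which is unbounded in $k$ because the basis of the reflexive space $T^*$ is boundedly complete (equivalently, $T^*$ contains no copy of $c_0$). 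This defeats every strategy of the subspace player, so $T^*(T^*)$ is not $C$-asymptotic-$c_0$ for any $C$. This is a clean, self-contained replacement for the citation.

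The second half, however, has a genuine gap, and it is exactly the point you flag as the ``main obstacle''. To apply Theorem \ref{thm somme E} with $E=T^*$ you must produce, for \emph{each} $p\in(1,\infty)$, an equivalent norm on $T^*$ whose unit vector basis is $1$-unconditional and $p$-convex with convexity constant $1$. This is not a routine unwinding of the Tsirelson norm: $p$-convexity with constant $1$ already forces $\|\sum_{j=1}^k f_j\|\le k^{1/p}$ and, more generally, an upper $\ell_p$-estimate with constant $1$ on disjointly supported vectors, so you would first need the (true but nontrivial) logarithmic upper bounds for sums of successive normalized blocks in $T^*$, then the Lindenstrauss--Tzafriri implication from upper $p$-estimates to $r$-convexity for $r<p$, and finally the standard renorming bringing the convexity constant down to $1$ -- none of which is supplied or cited. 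Your fallback (tracking a convexity constant $C>1$ through the proof of Theorem \ref{thm somme E}) would repair the constant-$1$ issue but still presupposes that $T^*$ is $p$-convex with \emph{some} constant for every $p$, which is the actual missing ingredient. The paper avoids all of this: it computes the Szlenk power type, $\operatorname{p}(T^*)=1$ via asymptotic-$\ell_1$-ness of $T$ and Lemma 5.3 of \cite{draga2016direct}, deduces $\operatorname{p}(T^*(T^*))=1$ from Theorem 5.9 of \cite{draga2016direct}, concludes that $T^*(T^*)$ is $p$-AUSable for all $p\in(1,\infty)$ by \cite{GKL}, and then uses reflexivity together with the Kalton--Randrianarivony theorem to get HFC$_{p,d}$. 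If you want to keep your route through Theorem \ref{thm somme E}, you must actually establish the $p$-convexity (or upper $\ell_p$-estimate) input for $T^*$; otherwise the Szlenk-index route is the shorter and complete argument.
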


Before proving this corollary, let us recall some properties of the space $T^*$. First, it is reflexive, so we will denote its dual by $T$. This space $T^*$ has a normalized, shrinking, $1$-unconditional basis $(e_n)_{n=1}^{\infty}$. Let us denote by $(e_n^*)_{n \in \N} \subset T$ the coordinate functionals of $(e_n)_{n \in \N}$. For an element $x= \sum_{n=1}^{\infty} e_n^*(x) e_n \in X$, let us denote by $\supp(x)$ the support of $x$, \ie, the subset of integers $n$ such that $e_n^*(x) \neq 0$. The space $T$ satisfies the following (see \cite{Tsirelson} and \cite{FJ}): for every $(x_i)_{i=1}^n \subset T$ with $(\supp (x_i))_{i=1}^n$ increasing, $\|x_i\|=1$, and $\supp(x_1) \subset [k+1, \infty )$, we have
\[ \forall (a_i)_{i=1}^n \subset \R, \Big\| \sum_{i=1}^n a_i x_i \Big\| \geq \frac{1}{2} \sum_{i=1}^n |a_i| . \]  

\begin{proof}[Proof of Corollary \ref{cor sum T*}]
The lack of property HC$_{\infty}$ is a direct consequence of Lemma 2.7 of \cite{baudier2020geometry} (asserting that $T^*(T^*)$ is not asymptotic-$c_0$) and Theorem \ref{HC infty implique asc0} above. The fact that $T^*(T^*)$ has property HFC$_{p,d}$, for every $p \in (1, \infty)$, can be deduced from Theorem 5.9 of \cite{draga2016direct}, applied with $p=1$, $X_n=T^*$ for all $n \in \N$ and $\mathfrak{E}=T^*$. Indeed, if we let
\[ \operatorname{p}(X)=\inf \left\{ q \geq 1; \text{ $X$ is $p$-AUSable}, \frac{1}{p}+\frac{1}{q}=1 \right\} \]
for a given Banach space $X$, Theorem 5.9 \cite{draga2016direct} asserts that $\operatorname{p}(T^*(T^*))=1$. As $T^*(T^*)$ is reflexive, the result follows from Theorem 6.1 \cite{KR} of Kalton and Randrianarivony (see the remark after Definition 2.5).
\end{proof}

As property HC$_{\infty}$ prevents the equi-coarse embeddability of the Hamming graphs, we can therefore ask the following :

\begin{pb}
If the Hamming graphs do not equi-coarsely embed into a Banach space $X$, does it follow that $X$ is asymptotic-$c_0$?
\end{pb}

Let us mention that, in case the answer to this question is positive, the embeddings in a non-asymptotic-$c_0$ space would not be canonical, because of Proposition 4.6 and Remark 4.7 of \cite{baudier2020geometry}. \\

Moreover, let us note that we only defined property HFC$_{p,d}$ (resp. HIC$_{p,d}$, resp. HC$_{p,d}$) for $p \in (1, \infty)$ because, for $p= \infty$, this is exactly the definition of property HFC$_{\infty}$ (resp. HIC$_{\infty}$, resp. HC$_{\infty}$). In the light of Section \ref{section sums}, the following question seems natural.

\begin{pb}
Does property HC$_{\infty}$ imply quasi-reflexivity?
\end{pb}

In addition, as Lindenstrauss spaces provided us with non trivial examples of AUSable dual Banach spaces without any concentration property, the following result, due to Schlumprecht, provides us with a non trivial example of an asymptotic-$c_0$ separable dual Banach space without any concentration property.

\begin{theorem}[Schlumprecht] \label{thm schlumprecht}
Let $X$ be a Banach space whose dual is separable. Then, there exists an asymptotic-$c_0$ separable dual Banach space $Z_X$ such that
\[ Z_X^{**}=Z_X \oplus X^* . \]
\end{theorem}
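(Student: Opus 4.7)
The plan is to carry out a Lindenstrauss--Bellenot-type construction, modified in two ways: the ambient norm should be of asymptotic-$c_0$ type, and the ``tail'' data should encode the dual space $X^*$ so that it appears as an actual Banach-space direct summand of the bidual. First, I fix a dense sequence $(y_n^*)_{n \in \N}$ in $B_{X^*}$, a fast-growing sequence of integers $(N_k)$, and on $c_{00}(\N)$ I define a norm implicitly by an expression of the form
\[ \|z\| = \max\Bigl( \|z\|_{\infty},\; \sup_{k}\; \max_{(E_1, \ldots, E_{N_k}) \in \A_k} \max_{1 \le i \le N_k} \|E_i z\|,\; \sup_{n} |\langle z_n^*, z \rangle| \Bigr), \]
where $\A_k$ is a Schreier-type family of admissible successive supports and $(z_n^*)$ is a family of auxiliary functionals on $c_{00}$ carrying the information of $X^*$ via the dense sequence $(y_n^*)$. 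The crucial design choice is that the iterated norming aggregates admissible pieces via a \emph{max} (rather than an $\ell_p$-sum), so that on sufficiently spread-out block sequences the intrinsic norm collapses to the $\ell_\infty$-norm; this is what will yield the asymptotic-$c_0$ property for the completion $Z_X$.

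Next, I would verify the basic structural properties of $Z_X$. By bimonotonicity of the construction, the canonical basis $(e_n)$ extends to a Schauder basis; a sliding-hump argument against the functionals involved in the definition of the norm shows it is shrinking, so that $Z_X^*$ is separable and $Z_X^{**}$ identifies with the space of sequences $\xi$ satisfying $\sup_N \|\sum_{n \le N} \xi_n e_n\| < \infty$. The asymptotic-$c_0$ property then follows directly from the definition: for any $k \in \N$, sufficiently far-out normalized successive blocks $(v_1, \ldots, v_k)$ satisfy $\|\sum a_i v_i\| \le C \max_i |a_i|$, because the Schreier restrictions in $\A_k$ and the supports of the relevant $z_n^*$ can all be arranged to miss the late blocks, reducing the norm to $\ell_\infty$.

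The heart of the argument, and the main obstacle, is identifying the bidual decomposition. I would tailor the auxiliary functionals $(z_n^*)$ so that, for each $n$, a carefully chosen bounded sequence in $Z_X$ of the form $s_N^{(n)} = \sum_{j \in I_n(N)} e_j$ converges weak-$*$ in $Z_X^{**}$ to an element $y_n^{**}$, and so that the pairing of $y_n^{**}$ with the bidual extensions of the $z_m^*$ reproduces the natural duality pairing between $X$ and $X^*$. Density of $(y_n^*)$ together with a careful norm interpolation then give that the closed linear span of $\{y_n^{**}\}$ in $Z_X^{**}$ is isometric to $X^*$. The \emph{direct sum} decomposition $Z_X^{**} = Z_X \oplus X^*$ (as opposed to a mere quotient statement) requires a bounded projection; this would be obtained by using the encoded $(z_n^*)$ to read off the $X^*$-coordinate of any $z^{**} \in Z_X^{**}$ and subtracting it, leaving a residual which belongs to the closure of $c_{00}$ in $Z_X^{**}$, hence to $Z_X$ itself. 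The subtle point throughout is compatibility: the max-based norm that delivers asymptotic-$c_0$ must not distort the $X^*$-norm recovered at infinity, and this rigidity constraint is precisely what will determine the admissible choice of $\A_k$ and of $(z_n^*)$, and is where I expect the bulk of the technical work in the appendix to reside.
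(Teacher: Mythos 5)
Your architecture differs from the paper's in a way that leaves the central difficulty unaddressed. The paper does not build the target space directly with a shrinking basis and encoded functionals; it builds an auxiliary space $Z$ (the completion of $c_{00}$ under a norm that measures $X$-norms of successive interval pieces and aggregates them through Tsirelson's space $T$), shows the canonical basis of $Z$ is \emph{boundedly complete}, and takes the desired space to be the predual $Y=\overline{\vspan}(e_j^*)\subset Z^*$. The copy of $X^*$ in $Y^{**}=Z^*$ is then obtained for free: the map $Q:Z\to X$, $\sum a_je_j\mapsto \sum a_jx_j$ (with $\{\pm x_n\}$ dense in $S_X$) is a quotient map, so $Q^*:X^*\to Z^*$ is automatically a linear isometric embedding with range $(\ker Q)^{\perp}$. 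Your substitute --- a dense sequence $(y_n^*)$ in $B_{X^*}$, auxiliary functionals $z_n^*$, and weak$^*$ limits $y_n^{**}$ of sums of basis vectors --- has a genuine gap here: you never explain why the assignment $y_n^*\mapsto y_n^{**}$ is well defined and extends to a linear isometry on all finite linear combinations (two combinations of the $y_n^*$ that coincide in $X^*$ must produce the same element of $Z_X^{**}$, and the $Z_X^{**}$-norm must reproduce the $X^*$-norm). Density of $(y_n^*)$ in the ball gives you the right closed span as a set only after this linear-isometric correspondence is established, and arranging it is exactly the constraint that the quotient-map device is designed to dissolve.

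The second gap is the decomposition itself. Saying one can ``read off the $X^*$-coordinate of any $z^{**}$ and subtract it, leaving a residual in $Z_X$'' is a restatement of the conclusion, not an argument; in the paper this is the bulk of the appendix: one assumes some $z^*\in B_{Z^*}$ has positive distance from $Y\oplus Q^*(X^*)$, writes it as a weak$^*$ limit of convex combinations of the norming functionals, and runs a two-case analysis (extracting a $Q^*(X^*)$-component when interval systems concentrate across a fixed window, and otherwise producing a block sequence in $T^*$ whose domination of a boundedly complete sequence forces $\bigl\| \sum_{j\le n} w_j^*\bigr\|\nearrow\infty$, a contradiction). Nothing in your outline plays the role of this step, and your ``max over Schreier-admissible families'' norm does not obviously supply the boundedly-complete lower estimates in the dual that drive the contradiction --- the paper gets asymptotic-$c_0$ for $Y$ from the asymptotic-$\ell_1$ property of the outer space $T$, not from a $c_0$-type aggregation. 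Finally, you never address why your $Z_X$ is a separable \emph{dual} space (a shrinking basis only gives a separable dual $Z_X^*$); the paper obtains this at the end by identifying $Y$ with $(\ker Q)^*$ once the complementation is proved, and this dual-space conclusion is needed later (for the Radon--Nikod\'ym property argument).
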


With this theorem, proved in Section 5, and arguments of Braga \cite{Braga2}, we can prove the following result, which proof is in the spirit of Proposition \ref{prop bidual Braga}.

\begin{corollary}
There exists a separable asymptotic-$c_0$ dual space $Z$ such that there exists a sequence of equi-Lipschitz functions $(f_k : [\N]^{2k} \to Z)_{k \in \N}$ satisfying the following property: for all $\varepsilon > 0$, all $k \in \N$ and all $\overline{n}, \overline{m} \in [\N]^k$, there exists $i \geq \max(n_k, m_k)$ such that
\[ \|f_k(\overline{n}, \overline{n}')-f_k(\overline{m}, \overline{m}')\| \geq 2 d_{\Delta}(\overline{n}, \overline{m})-\varepsilon \]
for all $\overline{n}', \overline{m}' \in [\N]^k$ with $n_1', m_1' > i$. \\
In particular, $Z$ cannot have any of the concentration properties we introduced. 
\end{corollary}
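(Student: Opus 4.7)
The plan is to apply Theorem~\ref{thm schlumprecht} with $Y = \ell_1 = c_0^*$, producing an asymptotic-$c_0$ separable dual space $Z := Z_{c_0}$ satisfying $Z^{**} = Z \oplus \ell_1$. Let $(e_n^{**})_{n \in \N}$ denote the canonical basis of the $\ell_1$-summand of $Z^{**}$, normalized so the induced norm coincides with the usual $\ell_1$-norm. Schlumprecht's construction furnishes, for each $n$, a bounded sequence $(z_{n,s})_{s \in \N} \subset B_Z$ weak-$*$ convergent to $e_n^{**}$ (equivalently, Goldstine's theorem applies once one knows $Z^*$ is separable in this construction). I then set
\[ f_k(\overline{n}, \overline{n}') := \sum_{j=1}^k z_{n_j,\, n'_j} \in Z, \qquad (\overline{n}, \overline{n}') \in [\N]^{2k}. \]

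Changing a single Hamming coordinate modifies exactly one summand, so $(f_k)_k$ is equi-Lipschitz with $\Lip(f_k) \leq 2\sup_{n,s}\|z_{n,s}\|$. For the lower bound, fix $\overline{n}, \overline{m} \in [\N]^k$ and $\varepsilon > 0$; Hahn--Banach yields $z^* \in B_{Z^*}$ with
\[ \langle F_k(\overline{n}) - F_k(\overline{m}), z^*\rangle = \|F_k(\overline{n}) - F_k(\overline{m})\|_{Z^{**}} = 2\, d_\Delta(\overline{n}, \overline{m}), \]
where $F_k(\overline{n}) := \sum_j e_{n_j}^{**}$. The weak-$*$ convergences $z_{n_j, s} \to e_{n_j}^{**}$ tested against $z^*$ allow me to choose $i \geq \max(n_k, m_k)$ such that, for every $\overline{n}', \overline{m}'$ with $n'_1, m'_1 > i$,
\[ \|f_k(\overline{n}, \overline{n}') - f_k(\overline{m}, \overline{m}')\| \geq \langle f_k(\overline{n}, \overline{n}') - f_k(\overline{m}, \overline{m}'), z^*\rangle \geq 2\, d_\Delta(\overline{n}, \overline{m}) - \varepsilon, \]
which is exactly the claimed estimate.

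For the ``in particular'' clause, suppose $Z$ has one of the listed concentration properties with constant $\lambda$. By Proposition~\ref{prop KRI=K} and Remark~\ref{rmq KRI=K}, I may pass to the interlaced formulation, and Ramsey theory then yields, for each $k$, an $\M \in [\N]^\omega$ such that $\|f_k(\overline{p}) - f_k(\overline{q})\| \leq \lambda' (2k)^{1/p}$ for every $(\overline{p}, \overline{q}) \in I_{2k}(\M)$ (with $1/\infty = 0$), where $\lambda'$ absorbs $\Lip(f_k)$ and the universal constant from Proposition~\ref{prop KRI=K}. Writing an interlaced pair as $\overline{p} = (\overline{n}, \overline{n}')$, $\overline{q} = (\overline{m}, \overline{m}')$ and choosing $\overline{n}', \overline{m}' \subset \M$ past the threshold $i(\overline{n}, \overline{m}, 1)$ from the previous paragraph, interlacing forces $d_\Delta(\overline{n}, \overline{m}) = k$, so the lower bound delivers $\|f_k(\overline{p}) - f_k(\overline{q})\| \geq 2k - 1$, contradicting the concentration estimate for $k$ large.

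The principal obstacle is structural rather than combinatorial: one must know that the $\ell_1$-summand of $Z^{**}$ carries the genuine $\ell_1$-norm (so the coefficient $2$ in the lower bound is sharp) and that each $e_n^{**}$ admits a bounded weak-$*$ sequential approximation from $B_Z$. Both features are built into Schlumprecht's generalization of the Lindenstrauss construction, which is proved in the appendix; once they are in hand, the rest of the argument is the routine interplay of Hahn--Banach, Ramsey, and the equivalence results of subsection~2.5.
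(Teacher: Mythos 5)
Your proof is correct and follows essentially the same route as the paper: exploit the isomorphic (in fact isometric) copy of $\ell_1$ inside $Z^{**}=Z_{c_0}^{**}$, pull its basis back into $C B_Z$ by Goldstine, define $f_k$ as the corresponding sums, and obtain the lower bound from weak$^*$ lower semicontinuity of the bidual norm before contradicting concentration on interlaced pairs. The only cosmetic point is that your norming functional for $F_k(\overline{n})-F_k(\overline{m})\in Z^{**}$ should be taken as an almost-norming element of $B_{Z^*}$ (the predual ball is norming but the supremum need not be attained), which the $\varepsilon$ in the statement absorbs exactly as in the paper's appeal to weak$^*$ lower semicontinuity.
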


\begin{proof}
Let $Z=Z_{c_0}$ given by the previous theorem. It is a separable asymptotic-$c_0$ dual space. Now, we start by noting that $\ell_1$ linearly embeds into $Z^{**}$ hence the existence of a bounded sequence $(z_n^{**})_{n \in \N} \subset Z^{**}$ with the following property
\[ (*) \hspace{0.5cm} \forall k \in \N, \forall (\varepsilon_1, \dots, \varepsilon_k) \in \{\pm 1 \}^k, \forall (n_1, \dots, n_k) \in [\N]^k, \Big\| \sum_{j=1}^k \varepsilon_j z_{n_j}^{**} \Big\| \geq k . \]
Let $C= \sup_{n \in \N} \|z_n^{**}\|_{Z^{**}}$. Since $Z^*$ is separable, by Goldstine's Theorem, for each $n \in \N$, we can find a sequence $(z_{(n,m)})_{m \in \N} \subset C B_Z$ such that
\[ z_n^{**} = \omega^* - \lim\limits_{m \to \infty} z_{(n,m)} . \]
Then, for each $k \in \N$, the map
\[ f_k : \left\lbrace \begin{array}{lll}
[\N]^{2k} & \to & Z \\
\overline{n} & \mapsto & \sum\limits_{j=1}^k z_{(n_j,n_{k+j})}
\end{array} \right. \]
satisfies $\Lip(f_k) \leq 2C$. Using weak$^*$-lower semicontinuity of the norm and $(*)$, we get the result. \\
In particular, this sequence of equi-Lipschitz functions is such that 
\[ \forall k \in \N, \forall \M \in [\N]^{\omega}, \exists (\overline{n}, \overline{m}) \in I_{2k}(\M) ; \|f_k(\overline{n})-f_k(\overline{m})\| \geq 2k-1 \]
thus $Z$ cannot have any of the concentration properties we introduced.
\end{proof}

\begin{rmk}
Let us mention that $Z$ is a non-quasi-reflexive asymptotic-$c_0$ space that does not satisfy any of the concentration properties for non-trivial reasons. Indeed, since $Z$ has the Radon-Nikod\`ym property (see \cite{DP}) and a separable bidual, it cannot contain a linear copy of $c_0$, not even a Lipschitz copy of $\ell_1$. 
\end{rmk}

\section{Schlumprecht's construction of generalized Lindenstrauss spaces} \label{app}

This last section is dedicated to Schlumprecht's proof of Theorem \ref{thm schlumprecht}.

Let $X$ be a separable Banach space, and $(x_n)_{n \in \N}$ be a sequence in $S_X$ which has the property that $\{ \pm x_n, n \in \N \}$ is dense in $S_X$. Secondly, we are given a space $U$ which has a normalized and $1$-unconditional basis $(u_n)_{n \in \N}$. 

On $c_{00}$, we define the following norm: for $a= \sum_{i \in \N} a_i e_i \in c_{00}$, put
\[ \|a\| = \max \left\{ \Big\| \sum_{j=1}^k \| \sum_{i \in I_j} a_i x_i \|_X u_{\min I_j} \Big\|_U \big| \begin{matrix} k \in \N, I_j \subset \N \text{ interval}, j \leq k \\ I_1 < I_2 < \cdots < I_k \end{matrix} \right\} . \]
Here $I < J$ for two subsets $I$ and $J$ of $\N$, with $I$ being finite, means $\max I < \min J$, and we write $I \geq n$, or $I \leq n$ if $\min I \geq n$, or $\max I \leq n$.

We let $Z$ be the completion of $c_{00}$ under above defined norm and denote that norm by $\|\cdot\|_Z$.

\begin{rmk}
In \cite{espLin}, the norm $\|\cdot\|_Z$ was defined by taking only intervals $I_1 < I_2 < \cdots < I_k$, with the property that $\min(I_{j+1})=\max(I_j)+1$, if $1 \leq j < k$. But since $(u_j)_{j \in \N}$ is $1$-suppression-unconditional, this is irrelevant. Indeed, if $I_j=[p_j, q_j)$, for $1 \leq j \leq k$, one may add, if necessary, the intervals $[q_j, p_{j+1})$, for $1 \leq j < k-1$, to satisfy the condition in \cite{espLin}.
\end{rmk}

Let us first note some easy observations.

\begin{lemma} \label{lemma 3}
Assume that $a= \sum_{i \in \N} a_i e_i \in c_{00}$ with $m= \min \supp(a)$ and $n= \max \supp(a)$. \\
We have
\[ \|a\|_Z \leq 2 \max \left\{ \Big\| \sum_{j=1}^k \| \sum_{i \in I_j} a_i x_i \|_X u_{\min I_j} \Big\|_U \big| \begin{matrix} k \in \N, I_j \subset \N \text{ interval}, j \leq k \\ m \leq I_1 < \cdots < I_k \leq n \end{matrix}  \right\} . \]
\end{lemma}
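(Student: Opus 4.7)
The plan is to fix an arbitrary chain $I_1 < \cdots < I_k$ in $\N$ with value
\[ V = \left\| \sum_{j=1}^k \| \sum_{i \in I_j} a_i x_i \|_X \, u_{\min I_j} \right\|_U \]
and produce a chain inside $[m,n]$ whose value is at least $V/2$.

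I first discard any $I_j$ disjoint from $\supp(a)$, since such an interval contributes a zero coefficient to the $U$-expression and leaves $V$ unchanged. After this pruning, every remaining $I_{j-1}$ meets $\supp(a) \subset [m,n]$, so $\max I_{j-1} \geq m$; combined with $\min I_j > \max I_{j-1}$, this forces $\min I_j \geq m+1$ whenever $j \geq 2$, and symmetrically $\max I_j \leq n-1$ whenever $j \leq k-1$. Hence only $I_1$ can extend below $m$ and only $I_k$ can extend above $n$.

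I then replace each $I_j$ by $\tilde I_j := I_j \cap [m,n]$, which is still a strictly increasing chain, now contained in $[m,n]$. Since $a_i = 0$ outside $[m,n]$, the coefficient $\alpha_j := \| \sum_{i \in I_j} a_i x_i \|_X$ is preserved, the index $\min \tilde I_j$ coincides with $\min I_j$ for every $j \geq 2$, and the only possible modification is $\min \tilde I_1 = m$ in the case $\min I_1 < m$. Let $V'$ denote the $U$-expression associated with $(\tilde I_j)$; by construction $V'$ is bounded by the right-hand side of the lemma. If $\min I_1 \geq m$ then $V = V'$ and there is nothing to prove.

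Otherwise the triangle inequality in $U$ and the normalization $\|u_{\min I_1}\|_U = 1$ give
\[ V \leq \alpha_1 + \left\| \sum_{j=2}^k \alpha_j u_{\min I_j} \right\|_U . \]
Since $(u_n)$ is $1$-unconditional, it is \emph{a fortiori} $1$-suppression-unconditional, so keeping only the first summand of $V'$ yields $\alpha_1 = \|\alpha_1 u_m\|_U \leq V'$, while suppressing that same summand yields $\| \sum_{j=2}^k \alpha_j u_{\min I_j} \|_U \leq V'$. Adding these two inequalities produces $V \leq 2V'$, and passing to the supremum over chains $I_1 < \cdots < I_k$ gives the stated bound. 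The only delicate step is the preliminary bookkeeping isolating $I_1$ (and $I_k$) as the unique intervals that can stick out of $[m,n]$; the rest is the triangle inequality together with two elementary applications of suppression-unconditionality.
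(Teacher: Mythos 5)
Your proof is correct and follows essentially the same route as the paper: in both arguments the only obstruction is the index $\min I_1$ (and the irrelevant overhang of $I_k$ past $n$), and the factor $2$ arises from peeling off the first term with the triangle inequality and controlling the two pieces via $1$-(suppression-)unconditionality. The only cosmetic difference is that the paper runs a dichotomy on whether $\|\sum_{i\in I_1}a_ix_i\|_X$ exceeds $\tfrac12\|a\|_Z$ and compares against two different admissible chains, whereas you bound both pieces by the single trimmed chain $(\tilde I_j)$, which is a slightly tidier packaging of the same estimate.
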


\begin{proof}
There exist $k \in \N$ and intervals $I_1 < \cdots < I_k$ so that 
\[ \|a\|_Z = \Big\| \sum_{j=1}^k \| \sum_{i \in I_j} a_i x_i \|_X u_{\min I_j} \Big\|_U . \]
We can assume that $\max I_k \leq n$ (otherwise, $I_k$ would not contribute to $\|a\|_Z$) and we could replace $I_k$ by $I_k \cap [1,n]$ without changing the left side of above equation. \\
Secondly, we can assume that $k \geq 2$ (otherwise the claim is trivial) and that $m \leq I_2$ (otherwise, the contribution of the $I_1$ part is $0$). \\
$\ast$ If $\| \sum_{i \in I_1} a_i x_i \|_X \leq \frac{1}{2} \|a\|_Z$, then
\begin{align*}
\|a\|_Z & \leq 2 \Big\| \sum_{j=2}^k \| \sum_{i \in I_j} a_i x_i \|_X u_{\min I_j} \Big\|_U \\
& \leq 2 \max \left\{ \Big\| \sum_{j=1}^k \| \sum_{i \in J_j} a_i x_i \|_X u_{\min J_j} \Big\|_U ; k \in \N, J_j \subset \N \text{ interval}, \begin{matrix} j \leq k \\ m \leq J_1 <  \cdots < J_k \leq n \end{matrix}  \right\} .
\end{align*}
$\ast$ Otherwise, $\| \sum_{i \in I_1} a_i x_i \|_X > \frac{1}{2} \|a\|_Z$ and
\begin{align*}
\|a\|_Z & \leq 2 \| \sum_{i \in I_1} a_i x_i \|_X  = 2 \Big\| \| \sum_{i \in I_1 \cap [m,n]} a_i x_i \|_X u_m \Big\|_U \\
& \leq 2 \max \left\{ \Big\| \sum_{j=1}^k \| \sum_{i \in J_j} a_i x_i \|_X u_{\min J_j} \Big\|_U \big| \begin{matrix} k \in \N, J_j \subset \N \text{ interval}, j \leq k \\ m \leq J_1 <  \cdots < J_k \leq n \end{matrix}  \right\}
\end{align*}
which proves the claim.
\end{proof}

\begin{prop} \label{prop 2}
We have \\
$(i)$ $(e_j)_{j \in \N}$ is a normalized monotone basis of $Z$; \\
$(ii)$ If $(u_j)_{j \in \N}$ is boundedly complete so is $(e_j)_{j \in \N}$; \\
$(iii)$ For $\sum_{i \in \N} a_i e_i \in Z$ it follows that $\sum_{i \in \N} a_i x_i$ converges in $X$, and the map
\[ Q : \left\lbrace \begin{array}{lll}
Z & \to & X \\
\sum_{j \in \N} a_j e_j & \mapsto & \sum_{j \in \N} a_j x_j
\end{array} \right. \]
is a quotient map with the property that, for each $\varepsilon > 0$, $B_X \subset Q((1+ \varepsilon) B_Z)$.
\end{prop}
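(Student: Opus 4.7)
My plan is to tackle the three parts in order, with the bulk of the work concentrated in the quotient estimate of (iii). For (i), writing $P_N a := \sum_{i=1}^N a_i e_i$, the equality $\|e_j\|_Z = 1$ is immediate: the singleton interval $\{j\}$ yields the value $1$, while any other collection of intervals contributes at most $\|x_j\|_X \|u_{\min I}\|_U = 1$ from whichever interval contains $j$. For monotonicity, take $a \in c_{00}$ and $N \in \N$; any intervals realizing $\|P_N a\|_Z$ can be assumed to lie in $[1,N]$, and on such intervals $a$ and $P_N a$ agree, so the same intervals give the same contribution to $\|a\|_Z$. Hence $\|P_N a\|_Z \leq \|a\|_Z$; combined with density of $c_{00}$ in $Z$, this makes $(e_j)$ a normalized monotone Schauder basis.

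For (ii), set $L := \sup_N \|P_N a\|_Z$ and suppose for contradiction that $(P_N a)_N$ is not Cauchy. Then there exist $\varepsilon > 0$ and integers $N_1 < M_1 < N_2 < M_2 < \cdots$ with $\|P_{M_k} a - P_{N_k} a\|_Z > \varepsilon$ for every $k$. For each $k$ pick intervals $I_1^{(k)} < \cdots < I_{j_k}^{(k)}$ inside $[N_k + 1, M_k]$ whose contribution exceeds $\varepsilon$, and set $v_k := \sum_{j=1}^{j_k} \|\sum_{i \in I_j^{(k)}} a_i x_i\|_X u_{\min I_j^{(k)}} \in U$. The $v_k$'s are disjointly supported in the basis $(u_n)$ and each has $\|v_k\|_U > \varepsilon$, so bounded completeness of $(u_n)$ forces $\|v_1 + \cdots + v_K\|_U \to \infty$. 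On the other hand the concatenated family of intervals is admissible for $\|P_{M_K} a\|_Z$, giving $\|v_1 + \cdots + v_K\|_U \leq L$, a contradiction.

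For (iii), convergence of $\sum a_i x_i$ and the bound $\|Q\| \leq 1$ come for free from the single-interval tests $\|\sum_{i=N+1}^M a_i x_i\|_X \leq \|P_M a - P_N a\|_Z$ and $\|\sum_{i=1}^N a_i x_i\|_X \leq \|P_N a\|_Z \leq \|a\|_Z$. The core content is $B_X \subset Q((1+\varepsilon)B_Z)$. Fix $x \in B_X$, $\varepsilon > 0$, and a summable positive sequence $(\delta_n)_{n \geq 1}$ with $\sum \delta_n < \varepsilon/3$. Using density of $\{\pm x_n\}$ in $S_X$ (which, in an infinite-dimensional space, propagates to tail-density of any subsequence), construct inductively $k_1 < k_2 < \cdots$ and scalars $\alpha_n$ with $|\alpha_1| \leq \|x\|_X \leq 1$, $|\alpha_n| \leq \delta_{n-1}$ for $n \geq 2$, and $\|x - \sum_{i=1}^n \alpha_i x_{k_i}\|_X \leq \delta_n$ at every step. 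Setting $a := \sum_n \alpha_n e_{k_n}$ immediately gives $Q(a) = x$.

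To bound $\|a\|_Z$, fix intervals $I_1 < \cdots < I_K$ and write $S_j := \{n : k_n \in I_j\} = [p_j, q_j]$ (possibly empty); since $(k_n)$ is increasing, these $S_j$'s are disjoint intervals of $\N$. Telescoping $\sum_{n \in S_j} \alpha_n x_{k_n} = (\sum_{i \leq q_j} \alpha_i x_{k_i}) - (\sum_{i \leq p_j - 1} \alpha_i x_{k_i})$ against $x$ yields $\|\sum_{n \in S_j} \alpha_n x_{k_n}\|_X \leq 1 + \delta_{q_j}$ for the unique block (if any) containing $n = 1$, and $\leq \delta_{p_j - 1} + \delta_{q_j}$ for every other block. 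Since the $p_j - 1$'s are distinct and the $q_j$'s are distinct, the triangle inequality and the fact that $(u_n)$ is normalized bound the resulting $U$-norm by $1 + 2\sum_{n \geq 1} \delta_n < 1 + \varepsilon$; taking supremum over interval choices gives $\|a\|_Z \leq 1 + \varepsilon$. The main obstacle is precisely this last bookkeeping: one has to recognize that the interval structure of the Lindenstrauss norm pairs cleanly with the telescoping of the approximation errors, so that the cumulative cost is controlled by the summable tail $\sum \delta_n$.
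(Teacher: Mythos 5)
Your proof follows essentially the same route as the paper's: monotonicity of $(e_j)$ by truncating the norming intervals, bounded completeness by transporting disjoint blocks of $a$ into disjointly supported vectors of $U$, and the greedy $\pm x_n$ approximation with summably decaying corrections for the quotient estimate, with your telescoping bookkeeping for $\|a\|_Z \leq 1+\varepsilon$ being an equivalent variant of the paper's $\sum_{j\geq 2} a_j \leq \varepsilon/2$ bound. The one step that is imprecise as written is in (ii): from $\|P_{M_k}a - P_{N_k}a\|_Z > \varepsilon$ you cannot in general select norming intervals lying \emph{inside} $[N_k+1, M_k]$ without loss, since an optimal family may contain an interval starting before $N_k+1$, and truncating it changes the anchor vector $u_{\min I}$ — the basis $(u_n)$ is only $1$-unconditional, not subsymmetric, so this substitution is not free. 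This is exactly what Lemma \ref{lemma 3} is for: invoking it (as the paper does at this point) yields intervals inside the support with contribution at least $\varepsilon/2$, and the factor $2$ is harmless for the rest of your argument.
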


\begin{proof}
$(i)$ Let $\sum_{j \in \N} a_j e_j \in c_{00}$, and $n \in \N$. We define $b= \sum_{j=1}^n a_j e_j$. We choose intervals $I_1 < \cdots <I_k$ in $\N$ so that
\[ \| b \|_Z = \Big\| \sum_{j=1}^k \| \sum_{i \in I_j} b_i x_i \|_X u_{\min I_j} \Big\|_U . \]
Let us note that we can assume $\max I_k \leq n$. Now, for all $j \leq k$, we have $\sum_{i \in I_j} b_i e_i = \sum_{i \in I_j} a_i e_i$ so $\|a\|_Z \geq \|b\|_Z$. \\
$(ii)$ Let $(z_l)$ be a block in $Z$, say $z_l= \sum_{i=n_{l-1}+1}^{n_l} a_i e_i$, with $n_0=0<n_1 < n_2 < \cdots$, and $\|z_l\|_Z \geq 1$, for $l \in \N$. We need to show that 
\[ \liminfty \Big\| \sum_{i=1}^n z_i \Big\|_Z = \infty . \]
Using Lemma \ref{lemma 3}, we can choose natural numbers $0=k_0<k_1 < k_2 < \cdots$ and, for every $l \in \N$, intervals
\[ n_{l-1}+1 \leq I_{k_{l-1}+1} < I_{k_{l-1}+2} < \cdots < I_{k_l} \leq n_l, \]
so that
\[ \|z_l\|_Z \leq 2 \Big\| \sum_{j=k_{l-1}+1}^{k_l} \|\sum_{i \in I_j} a_i x_i \|_X u_{\min I_j} \Big\|_U . \]
Define $y_l= \sum_{j=k_{l-1}+1}^{k_l} \|\sum_{i \in I_j} a_i x_i \|_X u_{\min I_j} \in U$ for every $l \in \N$. Since $(u_i)_{i \in \N}$ is assumed to be boundedly complete, it follows from our assumption on $(z_l)$ that
\[ \liminfty \Big\| \sum_{l=1}^n y_l \Big\|_U = \infty . \]
But this implies that
\begin{align*}
\Big\| \sum_{i=1}^n z_i \Big\|_Z & \geq \Big\| \sum_{j=1}^{k_n} \|\sum_{i \in I_j} a_i x_i \|_X u_{\min I_j} \Big\|_U \\
&= \Big\| \sum_{l=1}^n y_l \Big\|_U \nearrow \infty, \text{ if } n \nearrow \infty .
\end{align*}
$(iii)$ For $a= \sum_{j \in \N} a_j e_j \in c_{00}$, it follows that 
\[ \|a\|_Z \geq \| \sum_{i \in \N} a_i x_i \|_X . \]
Thus, the operator 
\[ Q : \left\lbrace \begin{array}{lll}
Z & \to & X \\
\sum_{i \in \N} a_i e_i & \mapsto & \sum_{i \in \N} a_i x_i
\end{array} \right. \]
is well defined and $\|Q\| \leq 1$. Let $x \in B_X$ and $\varepsilon > 0$. Inductively, we can choose, for each $j \in \N$, $\sigma_j = \pm 1$, $n_j \in \N$, $a_j \in (0, 2^{-j} \varepsilon )$, if $j \geq 2$, $a_1=\|x\|_X$, so that $n_1 < n_2 < \cdots$, and 
\[ \Big\| x- \sum_{j=1}^l \sigma_j a_j x_{n_j} \Big\|_X < \varepsilon 2^{-l} . \]
Then
\[ z= \sum_{j=1}^{\infty} \sigma_j a_j e_{n_j} \in (1+ \varepsilon) B_Z \]
and 
\[ Q(z)= \lim\limits_{l \to \infty} \sum_{j=1}^l \sigma_j a_j x_{n_j} = x . \]
\end{proof}

From now on, we assume that $(u_j)_{j \in \N}$ is boundedly complete. By Proposition \ref{prop 2} $(ii)$, $Z$ is then the dual of the space $Y = \overline{\vspan(e_j^* ; j \in \N)} \subset Z^*$, where $(e_j^*)_{j \in \N}$ are the coordinate functionals of $(e_j)_{j \in \N}$ (cf Proposition 1.b.4 of \cite{LT1}). Therefore, $(e_j^*)_{j \in \N}$ is a shrinking basis of $Y$. For $z= \sum_{i \in \N} a_i e_i$, we call the set
\[ \supp(z)=\supp_Z(z)=\{ i \in \N ; a_i \neq 0 \} \]
the support of $z$. For $z^* \in Z^*$, we call
\[ \supp(z^*)=\supp_{Z^*}(z^*) = \{ i \in \N ; z^*(e_i \neq 0 \} \]
the support of $z^*$. \\
From Proposition \ref{prop 2} $(iii)$, it follows that $Q^* : X^* \to Z^*$ is an isometric embedding.

\begin{lemma}
Considering $Q^*(X^*)$ as a subspace of $Z^*$, it follows that $Q^*(X^*) \cap Y = \{0\}$ and $Q^*(X^*) + Y$ is norm closed.
\end{lemma}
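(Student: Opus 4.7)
The approach is to compare the behavior of elements of $Y$ and of $Q^*(X^*)$ on the ``tails'' of the basis of $Z$. Let $P_N : Z \to Z$ denote the basis projection $P_N(\sum_j a_j e_j) = \sum_{j \leq N} a_j e_j$; since $(e_j)$ is monotone, $\|P_N\| = 1$ and $\|I - P_N\| \leq 2$. By definition of $Y$, the sequence $(e_j^*)$ is a (monotone) basic sequence in $Z^*$ whose closed linear span equals $Y$, so every $y \in Y$ admits the expansion $y = \sum_j y(e_j) e_j^*$ with convergence in the $Z^*$-norm, equivalently
\begin{equation*}
\|y \circ (I - P_N)\|_{Z^*} = \left\|y - \sum_{j \leq N} y(e_j) e_j^*\right\|_{Z^*} \xrightarrow[N \to \infty]{} 0.
\end{equation*}

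The key claim, in contrast, is an ``anti-shrinking'' estimate: for every $x^* \in X^*$ and every $N \in \N$,
\begin{equation*}
\|Q^*(x^*) \circ (I - P_N)\|_{Z^*} \geq \|x^*\|_{X^*}.
\end{equation*}
Indeed, given $\varepsilon > 0$, pick $x \in S_X$ with $x^*(x) \geq \|x^*\| - \varepsilon$. By density of $\{\pm x_n : n \in \N\}$ in $S_X$, there exist $k > N$ and $\sigma \in \{\pm 1\}$ with $\|x - \sigma x_k\|_X < \varepsilon$, whence $|x^*(x_k)| \geq \sigma x^*(x_k) > \|x^*\| - \varepsilon(1 + \|x^*\|)$. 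Since $e_k = (I-P_N)e_k$ with $\|e_k\|_Z = 1$, evaluating at $e_k$ yields $\|Q^*(x^*) \circ (I-P_N)\|_{Z^*} \geq |Q^*(x^*)(e_k)| = |x^*(x_k)|$, and sending $\varepsilon \to 0$ proves the claim.

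Both conclusions of the lemma then follow immediately. If $Q^*(x^*) \in Y$, combining the two displayed facts gives $\|x^*\| \leq \lim_N \|Q^*(x^*) \circ (I - P_N)\|_{Z^*} = 0$, so $Q^*(X^*) \cap Y = \{0\}$. For closedness, write an arbitrary element of the sum as $z^* = Q^*(x^*) + y$ and apply the triangle inequality:
\begin{equation*}
\|z^* \circ (I - P_N)\|_{Z^*} \geq \|Q^*(x^*) \circ (I-P_N)\|_{Z^*} - \|y \circ (I - P_N)\|_{Z^*} \geq \|x^*\| - \|y \circ (I-P_N)\|_{Z^*}.
\end{equation*}
The left-hand side is at most $\|I - P_N\| \cdot \|z^*\| \leq 2 \|z^*\|$; letting $N \to \infty$ yields $\|Q^*(x^*)\| = \|x^*\| \leq 2 \|z^*\|$, and then $\|y\| \leq \|z^*\| + \|Q^*(x^*)\| \leq 3\|z^*\|$. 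Thus the natural linear map $Q^*(X^*) \oplus Y \to Q^*(X^*) + Y$ is an isomorphism onto its image, so $Q^*(X^*) + Y$ is closed.

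The main obstacle is formulating the anti-shrinking bound correctly and extracting it from the density hypothesis on $(x_n) \subset S_X$; once that estimate is in hand, the two conclusions of the lemma are really a single computation, distinguishing the ``tail-vanishing'' functionals of $Y$ from the ``tail-preserving'' functionals of $Q^*(X^*)$.
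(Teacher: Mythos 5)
Your proof is correct and is essentially the paper's argument: both rest on the same dichotomy that functionals in $Y$ vanish along the tail of the basis $(e_j)$ while $Q^*(x^*)$ retains norm $\|x^*\|$ there by density of $\{\pm x_n\}$ in $S_X$. The paper simply evaluates $Q^*(x^*)+z^*$ at the single vectors $e_n$ and takes $\limsup_n$, which yields the slightly sharper bound $\|Q^*(x^*)\|\leq\|Q^*(x^*)+z^*\|$ in place of your factor $2$, but the conclusion and mechanism are the same.
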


\begin{proof}
Let $x^* \in X^*$ and $z^* \in Y \subset Z^*$. Since the $(e_n)_{n \in \N}$, acting on elements of $Y$, are the coordinate functionals of $(e_n^*)_{n \in \N}$, it follows that $\lim_{n \to \infty} z^*(e_n)=0$. Thus, by the density condition on the $x_n$, $n \in \N$, it follows that
\[ \|Q^*(x^*)+z^*\| \geq \limsup\limits_{n \to \infty} |x^*(x_n)+z^*(e_n)|=\limsup_{n \to \infty} |x^*(x_n)|=\|x^*\|=\|Q^*(x^*)\|. \]
Since $x^* \in X^*$ and $z^* \in Y \subset Z^*$ are arbitrary, it follows that $Q^*(X^*) \cap Y = \{0\}$. Secondly, it follows that $Q^*(X^*) + Y$ is the direct sum of $Q^*(X^*)$ and $Y$, and thus $Q^*(X^*) + Y$ is norm closed.
\end{proof}

For $A \subset \N$ finite or cofinite, we denote by $P_A : Z \to Z$ the canonical projection onto $\overline{\vspan(e_j ; j \in A)}$, \ie
\[ P_A : \left\lbrace \begin{array}{lll}
Z & \to & Z \\
\sum_{i \in \N} a_i e_i & \mapsto & \sum\limits_{i \in A} a_i e_i
\end{array} \right. . \]
$P_A^*$ denotes the adjoint of $P_A$.

\begin{lemma} \label{lemma 5}
Define
\begin{align*}
S &= \left\{ \sum_{j=1}^k P^*_{I_j} \circ Q^*(x_j^*) \big| \begin{matrix}
I_1 < \cdots < I_k \text{ intervals in $\N$, } \min(I_{j+1})=\max(I_j)+1, \text{ if $j <k$} \\
x_j^* \in X^*, j \leq k, \text{ with } \| \sum_{j=1}^k \|x^*_j \| u_{\min(I_j)}^* \|_{U^*} \leq 1
\end{matrix}  \right\} \\
&= \left\{ \sum_{j=1}^k P^*_{[r_j, r_{j+1})} \circ Q^*(x_j^*) \big| \begin{matrix}
1 \leq r_1 < \cdots < r_k < r_{k+1} \text{ and } \\
x_j^* \in X^*, j \leq k, \text{ with } \| \sum_{j=1}^k \|x^*_j \| u_{r_j}^* \|_{U^*} \leq 1
\end{matrix} \right\} .
\end{align*}
Then $S \subset B_Y \subset B_{Z^*}$ and $S$ is norming the elements of $Z$, and thus its weak$^*$-closed convex hull is $B_{Z^*}$ by a Hahn-Banach argument. \\
Secondly, let $m \leq n$ and $z^* \in \vspan(e_j^* ; m \leq j \leq n)$ then it can be written as 
\begin{equation} \label{rep}
z^*= \sum_{i=1}^l \sum_{j=1}^{k_i} P^*_{[r_{(i,j)}, r_{(i,j+1)})} \circ Q^*(x^*_{(i,j)}),
\end{equation}
where $l \in \N$, $m \leq r_{(i,1)} < \cdots < r_{(i, k_i+1)} \leq n+1$ and $x_{(i,j)}^* \in X^*$, $j \leq k_i$, $i \leq l$. \\
And $\|z^*\|$ is at most the minimum of $\sum_{i=1}^l \| \sum_{j=1}^{k_i} \|x^*_{(i,j)} \| u^*_{r_{(i,j)}} \|_{U^*}$ over all representations of $z^*$ as in \eqref{rep}.
\end{lemma}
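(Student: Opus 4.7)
The statement bundles three claims that I would establish in turn: (A) $S \subset B_Y \subset B_{Z^*}$; (B) $S$ norms $Z$, hence by the bipolar theorem $\overline{\mathrm{conv}}^{w^*}(S) = B_{Z^*}$; (C) any finitely supported $z^*$ admits a representation of the form \eqref{rep}, and its norm is at most the minimum of $\sum_{i=1}^l \|\sum_{j=1}^{k_i} \|x^*_{(i,j)}\| u^*_{r_{(i,j)}}\|_{U^*}$ over such representations.

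For (A), fix $z^* = \sum_{j=1}^k P^*_{I_j}\circ Q^*(x_j^*) \in S$. Evaluating on basis vectors gives $z^*(e_i) = x_j^*(x_i)$ when $i \in I_j$ and $0$ otherwise, so $z^*$ is supported in the finite set $\bigcup_j I_j$ and therefore lies in $\mathrm{span}(e_i^*; i \in \N) \subset Y$. For $a = \sum a_i e_i \in c_{00}$, the identity $z^*(a) = \sum_j x_j^*(\sum_{i \in I_j} a_i x_i)$ together with $U$--$U^*$ duality yields
\[ |z^*(a)| \leq \sum_j \|x_j^*\| \Big\|\sum_{i \in I_j} a_i x_i\Big\|_X \leq \Big\|\sum_j \|x_j^*\| u^*_{\min I_j}\Big\|_{U^*} \Big\|\sum_j \Big\|\sum_{i \in I_j} a_i x_i\Big\|_X u_{\min I_j}\Big\|_U \leq \|a\|_Z, \]
the last inequality using the constraint defining $S$ and the very definition of $\|a\|_Z$.

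For (B), given $a \in c_{00}$, pick intervals $I_1 < \cdots < I_k$ attaining the max in $\|a\|_Z$, and by the Remark after the norm definition make them adjacent by inserting empty blocks (harmless by $1$-unconditionality of $(u_j)$). Set $\beta_j = \|\sum_{i \in I_j} a_i x_i\|_X$; duality on the $U$-side, together with $1$-unconditionality of $(u_j^*)$, supplies $\alpha_j \geq 0$ with $\|\sum_j \alpha_j u^*_{\min I_j}\|_{U^*} = 1$ and $\sum_j \alpha_j \beta_j = \|a\|_Z$. Pick $x_j^* \in S_{X^*}$ with $x_j^*(\sum_{i \in I_j} a_i x_i) = \beta_j$ and absorb by setting $y_j^* := \alpha_j x_j^*$. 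Then $z^* := \sum_j P^*_{I_j} Q^*(y_j^*)$ lies in $S$ (the constraint is met with equality) and satisfies $z^*(a) = \sum_j \alpha_j \beta_j = \|a\|_Z$, so $S$ norms $Z$. Since $S = -S$ by negating the $x_j^*$'s, the bipolar theorem then yields $B_{Z^*} = \overline{\mathrm{conv}}^{w^*}(S)$.

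For (C), existence of a representation is easy: since each $x_i \in S_X$, pick $\tilde{x}_i^* \in S_{X^*}$ with $\tilde{x}_i^*(x_i) = 1$, so that $e_i^* = P^*_{\{i\}} Q^*(\tilde{x}_i^*)$, and any $z^* = \sum_{i=m}^n c_i e_i^*$ is representable using singleton intervals $[i, i+1)$. For the norm estimate, take an arbitrary representation $z^* = \sum_{i=1}^l s_i^*$ with $s_i^* = \sum_j P^*_{[r_{(i,j)}, r_{(i,j+1)})} Q^*(x^*_{(i,j)})$ and set $c_i = \|\sum_j \|x^*_{(i,j)}\| u^*_{r_{(i,j)}}\|_{U^*}$. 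When $c_i > 0$ the scaled block $c_i^{-1} s_i^*$ lies in $S$, so $\|s_i^*\| \leq c_i$; when $c_i = 0$, $1$-unconditionality of $(u_j^*)$ forces each $x^*_{(i,j)} = 0$ and hence $s_i^* = 0$. The triangle inequality then gives $\|z^*\| \leq \sum_i c_i$, and minimising over representations delivers the claim. No step is genuinely deep; the only real bookkeeping is to keep the $U$ and $U^*$ roles straight and to justify the "adjacent intervals" reduction via unconditionality.
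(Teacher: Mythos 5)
Your proposal is correct and follows essentially the same route as the paper: the same $U$--$U^*$ duality computation gives $S\subset B_{Z^*}$, the same Hahn--Banach plus unconditionality argument produces a norming element of $S$ from an interval decomposition attaining $\|a\|_Z$, and the representation via singleton intervals $e_i^*=P^*_{\{i\}}\circ Q^*(\tilde x_i^*)$ is exactly the paper's. The only cosmetic difference is in the final norm estimate, where you rescale each block into $S$ and apply the triangle inequality rather than redoing the duality computation directly as the paper does; the two are equivalent.
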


\begin{proof}
Let $z= \sum_{i \in \N} a_i e_i \in Z$ and $z^*= \sum_{j=1}^k P^*_{[r_j,r_{j+1})} \circ Q^*(x_j^*) \in S$. Then
\begin{align*}
z^*(z) &= \left( \sum_{j=1}^k P^*_{[r_j,r_{j+1})} \circ Q^*(x^*_j) \right) \left( \sum_{i=1}^n a_i e_i \right) \\
&= \sum_{j=1}^k Q^*(x_j^*) \left( \sum_{i=r_j}^{r_{j+1}-1} a_i e_i \right) \\
&= \sum_{j=1}^k \sum_{i=r_j}^{r_{j+1}-1} a_i x_j^*(x_i) \\
& \leq \sum_{j=1}^k \|x_j^*\| \cdot \Big\| \sum_{i=r_j}^{r_{j+1}-1} a_i x_i \Big\| \\
&= \left( \sum_{j=1}^k \|x_j^*\| u_{r_j}^* \right) \left( \sum_{j=1}^k \Big\| \sum_{i=r_j}^{r_{j+1}-1} a_i x_i \Big\| u_{r_j} \right) \\
& \leq \Big\| \sum_{j=1}^k \| \sum_{i=r_j}^{r_{j+1}-1} a_i x_i \| u_{r_j} \Big\|_U \leq \|z\|_Z,
\end{align*}
which proves that $S \subset B_{Z^*}$. Since, for $i \in \N$, $e_i^*$ can be written as $e_i^*= P_i^* \circ Q^*(x_i^*)$, where $x_i^* \in S_{X^*}$ with $x_i^*(x_i)=1$, it follows that, for $m \leq n$, every $z^* \in \vspan(e_i^* ; m \leq j \leq n)$ can be represented as in \eqref{rep} and in particular, it follows that $S \subset B_Y$. \\
For $z = \sum_{i \in \N} a_i e_i \in Z \cap c_{00}$, it follows, for an appropriate choice of $k$ and $r_1 < \cdots < r_{k+1}$ in $\N$ that
\begin{align*}
\|z\| &= \Big\| \sum_{j=1}^k \Big\| \sum_{i=r_j}^{r_{j+1}-1} a_i x_i \Big\|_X u_{r_j} \Big\|_U \\
&= \sup \left\{ \sum_{j=1}^k b_j \Big\| \sum_{i=r_j}^{r_{j+1}-1} a_i x_i \Big\|_X ; \Big\| \sum_{j=1}^k b_j u_{r_j}^* \Big\|_{U^*} \leq 1 \right\} \\
&= \sup \left\{ \sum_{j=1}^k b_j x_j^* \Big(\sum_{i=r_j}^{r_{j+1}-1} a_i x_i \Big) ; x_j^* \in S_{X^*}, j \in \{1, \cdots, k \}, \text{ and } \Big\| \sum_{j=1}^k b_j u_{r_j}^* \Big\|_{U^*} \leq 1 \right\} \\
&= \sup \left\{ \sum_{j=1}^k x_j^* \Big(\sum_{i=r_j}^{r_{j+1}-1} a_i x_i \Big) ;  \Big\| \sum_{j=1}^k \|x_j^*\|_{X^*} u_{r_j}^* \Big\|_{U^*} \leq 1 \right\} \\
&= \sup \left\{ \Big( \sum_{j=1}^k P^*_{[r_j,r_{j+1}-1)} \circ Q^*(x_j^*) \Big)(z) ; \Big\| \sum_{j=1}^k \|x_j^*\|_{X^*} u_{r_j}^* \Big\|_{U^*} \leq 1 \right\} \\
& \leq \sup\limits_{z^* \in S} z^*(z),
\end{align*}
which implies that $S$ is norming the elements of $Z$, and thus its weak$^*$-closed convex hull is $B_{Z^*}$. \\
For $z^*= \sum_{i=1}^l \sum_{j=1}^{k_i} P^*_{[r_{(i,j)},r_{(i,j+1)})} \circ Q^*(x^*_{(i,j)}) \in Y$ and $z= \sum_{i \in \N} a_i e_i \in Z$, it follows that
\begin{align*}
z^*(z) &= \left( \sum_{i=1}^k \sum_{j=1}^{k_i} P^*_{[r_{(i,j)},r_{(i,j+1)})} \circ Q^*(x^*_{(i,j)}) \right) \left( \sum_{i \in \N} a_i e_i \right) \\
&= \sum_{i=1}^k \sum_{j=1}^{k_i} x^*_{(i,j)} \left( \sum_{s=r_{(i,j)}}^{r_{(i,j+1)}-1} a_s x_s \right) \\
& \leq \sum_{i=1}^k \sum_{j=1}^{k_i} \|x^*_{(i,j)}\|_{X^*} \Big\| \sum_{s=r_{(i,j)}}^{r_{(i,j+1)}-1} a_s x_s \Big\|_X \\
&= \sum_{i=1}^k \left( \sum_{j=1}^{k_i}  \|x^*_{(i,j)}\|_{X^*} u^*_{r_{(i,j)}} \right) \left( \sum_{j=1}^{k_i} \Big\| \sum_{s=r_{(i,j)}}^{r_{(i,j+1)}-1} a_s x_s \Big\|_X u_{r_{(i,j)}} \right) \\
& \leq \sum_{i=1}^k \Big\| \sum_{j=1}^{k_i} \|x^*_{(i,j)}\|_{X^*} u^*_{r_{(i,j)}} \Big\|_{U^*} \|z\|_Z,
\end{align*}
which implies that
\[ \|z^*\|_{Z^*} \leq \sum_{i=1}^k \Big\| \sum_{j=1}^{k_i} \|x^*_{(i,j)}\|_{X^*} u^*_{r_{(i,j)}} \Big\|_{U^*} , \]
and thus our claim on the upper estimate of $\|z^*\|_{Z^*}$.
\end{proof}

Our next goal is to formulate a condition on the space $U$ under which $Z^*$ is equal to $Q^*(X^*) \oplus Y$. We first make some general observations. \\
Assume that $z^* \in Z^*$ with $\operatorname{dist}(z^*,Y \oplus Q^*(X^*)) > 0$, for some $z^* \in B_{Z^*}=B_{Y^{**}}$. By Lemma \ref{lemma 5}, we can write $z^*$ as
\[ z^*= w^* - \liminfty z_n^*, \text{ with } z_n^*= \sum_{l=1}^{s_n} \lambda_{(n,l)} y^*_{(n,l)} \text{ where} \]
\[ \lambda_{(n,l)} \geq 0 \text{ for } l \leq s_n \text{ and } n \in \N, \text{ and } \sum_{l=1}^{s_n} \lambda_{(n,l)}=1 \text{ for } l \in \N \]
\[ y^*_{(n,l)}= \sum_{j=1}^{k_{(n,l)}} P^*_{[r_{(n,l,j)},r_{(n,l,j+1)})} \circ Q^*(x^*_{(n,l,j)}) \in S. \]
By possibly adding $0$, we assume that $1=r_{(n,l,1)} < \cdots < r_{(n,l,k_{(n,l)})} < r_{(n,l,k_{(n,l)}+1)} = \infty$. \\
For $n \in \N$, and $l \leq s_n$, we define $R_{(n,l)}=\{ r_{(n,l,j)}, 1 \leq j \leq k_{(n,l)} \}$ and, for $p<q$, we put
\[ A_{(p,q,n)}= \{ l \leq s_n ; (p,q) \cap R_{(n,l)} = \varnothing \} = \{ l \leq s_n ; \exists 1 \leq j \leq k_{(n,l)} \quad r_{(n,l,j)} \leq p < q \leq r_{(n,l,j+1)} \}  . \]
We define
\[ a_n^*(p,q)= \sum_{l \in A_{(p,q,n)}} \lambda_{(n,l)} y^*_{(n,l)}  . \]
By passing to subsequences and using diagonalization, we can assume that, for all $p<q$
\[ \varepsilon(p,q)= \liminfty \sum_{l \in A_{(p,q,n)}} \lambda_{(n,l)} \]
exists. \\
Note that, for fixed $p \in \N$, $\varepsilon(p,q)$ is decreasing (not necessarily strictly) to some $\varepsilon(p) \geq 0$, if $q$ increases to $\infty$, and $\varepsilon(p)$ is increasing to some $\varepsilon_0$, if $p$ increases to $\infty$. \\
We first consider the following case: \\
$\bullet$ Case 1. $\varepsilon_0 > 0$, which means that there is a $p \in \N$ and an $\varepsilon > 0$ so that, for all $p <q$,
\[ \liminfty \sum_{l \in A_{(p,q,n)}} \lambda_{(n,l)} > \varepsilon . \]
First, fix $p \in \N$ so that $\varepsilon(p) > 0$. After passing to a subsequence, we can assume that there is an increasing sequence $(q_n)_{n \in \N}$ in $\N$ so that
\[ \sum_{l \in A_{(p,q_n,n)}} \lambda_{(n,l)} > \varepsilon(p) \left( 1- \frac{1}{n} \right), \text{ for all } n \in \N . \]
After passing to a subsequence, we can assume that $v_p^*=w^* - \lim_{n \to \infty} a^*_{(p,q_n)}$ exists. \\
Let $n \in \N$. For $l \in A_{(p,q_n,n)}$, choose $j_l \in \{1, \cdots, k_{(n,l)} \}$, so that $[p,q_n] \subset [r_{(n,l,j_l)},r_{(n,l,j_l+1)}]$ and put
\[ x_n^* = \sum_{l \in A_{(p,q_n,n)}} \lambda_{(n,l)} x^*_{(n,l,j_l)} . \]
Then, $\|x_n^*\| \leq \sum_{l \in A_{(p,q_n,n)}} \lambda_{(n,l)}$ and $a_n^*(p,q_n)(e_i)=x_n^*(x_i)$, for all $i \in [p,q_n]$. \\
Again, by passing to a subsequence, we can assume that $x^*=w^* - \lim_{n \to \infty} x_n^*$ exists and
\begin{equation}
\|x^*\| \leq \liminf_{n \to \infty} \|x_n^*\| \leq \liminf_{n \to \infty} \sum_{l \in A_{(p,q_n,n)}} \lambda_{(n,l)} = \varepsilon(p) . 
\end{equation}
It follows, for all $i \geq p$, that
\[ v_p^*(e_i)= \liminfty a_n^*(p,q_n)(e_i)= \liminfty x_n^*(x_i)=x^*(x_i) . \]
Thus, we can write
\[ v_p^*=Q^*(x^*)-P^*_{[1,p)} \circ Q^*(x^*) + P^*_{[1,p)}(v_p^*)=P^*_{[p, \infty ]} \circ Q^*(x^*) + P_{[1,p)}^*(v_p^*), \]
which implies that $v_p^* \in Q^*(X^*) \oplus Y$. \\
It follows that $\operatorname{dist}(z^*-v_p^*, Q^*(X^*) \oplus Y)= \operatorname{dist}(z^*, Q^*(X^*) \oplus Y) > 0$.\\
Now, we can iterate the above argument applying it to large enough $p \in \N$ and find $p_0 \in \N$ and, for every $p \in \N$, $p \geq p_0$, a vector $v_p^* \in Q^*(X^*) \oplus Y$, an increasing sequence $(q_n)_{n \in \N}$ in $\N$ and infinite sets $\N \supset N_{p_0} \supset N_{p_0+1} \supset \cdots$ so that
\[ \operatorname{dist}(z^*-v_p^*, Q^*(X^*) \oplus Y)= \operatorname{dist}(z^*, Q^*(X^*) \oplus Y) \]
and
\[ v_p^* = w^* - \lim\limits_{n \to \infty, n \in N_p} \quad \sum_{l \in A_{(p,q_n,n)}} \lambda_{(n,l)} y^*_{(n,l)} .  \]
Since, for $p < p'$
\[ v_{p'}^*-v_p^* = w^* - \lim\limits_{n \to \infty, n \in N_{p'}} \quad \sum_{l \in A_{(p',q_n,n)} \setminus A_{(p,q_n,n)}} \lambda_{(n,l)} y^*_{(n,l)} \]
it follows that $(v_p^*)_{p \in \N}$ is a (norm) Cauchy sequence in $Q^*(X^*) \oplus Y$ which converges to some $v^*$. \\
Since the $w^*$-topology is metrizable, it follows that we can assume (after possibly passing to a subsequence of $(q_n)_{n \in \N}$), that, for some diagonal sequence $N$ of the $N_p$, $p \in \N$, and some increasing sequence $(p_n)_{n \in \N}$, we have
\[ \varepsilon_0 = \lim\limits_{n \to \infty, n \in N} \quad \sum_{l \in A_{(p_n,q_n,n)}} \lambda_{(n,l)} , \]
\[ v^* = w^* - \lim\limits_{n \to \infty, n \in N} \quad \sum_{l \in A_{(p_n,q_n,n)}} \lambda_{(n,l)} y^*_{(n,l)} \in Y \oplus Q^*(X^*) , \]
\[ \operatorname{dist}(z^*-v^*, Q^*(X^*) \oplus Y)= \operatorname{dist}(z^*, Q^*(X^*) \oplus Y) > 0 . \]
Now consider
\[ \tilde{z}^* = z^*-v^*= w^* - \lim\limits_{n \to \infty, n \in N} \tilde{z}_n^*  , \]
where, for $n \in N$, we put
\[ \tilde{z}_n^* = \sum_{l \in \{1, \cdots, s_n \} \setminus A_{(p_n,q_n,n)}} \lambda_{(n,l)} y^*_{(n,l)} \]
and define, for $n \in N$, and $p < q$
\[ B_{(p,q,n)} = \left\{ l \leq s_n, l \notin A_{(p_n,q_n,n)} \text{ and } \exists 1 \leq j \leq k_{(n,l)} \quad r_{(n,l,j)} \leq p < q \leq r_{(n,l,j+1)} \right\} . \]
It follows that, for any $p \in \N$, we have
\[ \lim\limits_{n \to \infty, n \in N} \quad \sum_{l \in B_{(p,q_n,n)}} \lambda_{(n,l)} \leq \lim\limits_{n \to \infty, n \in N} \quad \sum_{l \in B_{(p_n,q_n,n)}} \lambda_{(n,l)} = 0 . \]
Replacing now $z^*$ by $\tilde{z}^*$, and replacing the sequence $(z_n^*)_{n \in \N}$ by $(\tilde{z}_n^*)_{n \in \N}$, we can from now on assume that we are in Case $2$. \\
$\bullet$ Case $2$. For all $\varepsilon > 0$ and all $p \in \N$, there is a $q$ so that
\[ \limsup_{n \to \infty} \sum_{l \in A_{(p,q,n)}} \lambda_{(n,l)} < \varepsilon \]
($A(p,q,n)$ defined as above). \\
Let $\eta < \frac{1}{2} \operatorname{dist}(z^*, Q^*(X^*) \oplus Y)$ and let $(\eta_m)_{m \in \N} \subset (0, \eta)$, with $\sum_{m \in \N} \eta_m < \eta$. Inductively, we can choose $1=q_1 < q_2 < \cdots$ so that
\[ \limsup_{n \to \infty} \sum_{l \in A_{(q_m,q_{m+1},n)}} \lambda_{(n,l)} < \eta_m \text{ for all } m \in \N . \]
Put, for $n \in \N$:
\[ \tilde{z}_n^*= \sum_{l \in \{1, \cdots, s_n \} \setminus \bigcup_{m} A_{(q_m,q_{m+1},n)}} \lambda_{(n,l)} y^*_{(n,l)}  \]
then $\|z_n^*-\tilde{z}_n^* \| < \eta$. After passing to a subsequence, we can assume that $\tilde{z}^* = w^*- \lim_{n \to \infty} \tilde{z}_n^*$ exists. It follows that $\|\tilde{z}^*\| \leq 1$ and $\|z^*-\tilde{z}^*\| \leq \eta$, and thus
\[ \operatorname{dist}(\tilde{z}^*, Y \oplus Q^*(X^*)) = \operatorname{dist}(z^*, Y \oplus Q^*(X^*)) - \eta > 0 . \] 
We replace from now on $z^*$ by $\tilde{z}^*$ and $z_n^*$ by $\tilde{z}_n^*$, and therefore assume that, for each $n \in \N$ and each $l \leq s_n$, there is a $t_{(n,l)} \leq k_{(n,l)}$ and a subsequence $(j_l)_{l=1}^{t_{(n,l)}}$ of $\{1, \cdots, k_{(n,l)} \}$, $j_1=1$, so that
\begin{align*}
1= & r_{(n,l,1)} < r_{(n,l,2)} < \cdots < r_{(n,l,j_2-1)} < q_2 \leq r_{(n,l,j_2)} < \cdots < r_{(n,l,j_3-1)} < q_3 \leq r_{(n,l,j_3)} < \cdots \\
& < q_{t_{(n,l)}-1} \leq r_{(n,l,j_{t_{(n,l)}-1})} < \cdots < r_{(n,l,j_{t_{(n,l)}}-1)} < q_{t_{(n,l)}} \leq r_{(n,l,t_{(n,l)})} < \dots < q_{t_{(n,l)}+1} .
\end{align*}
After possibly adding some zero vectors to the $y^*_{(n,l)}$, we can assume that, for all $n \in \N$ and all $l \leq k_{(n,l)}$, $t_{(n,l)}=t_n= \max_{l \leq s_n} t_{(n,l)}$. We can also assume that $t_n$ is even, say $t_n=2 w_n$, and that $w_n \geq n$. \\
We write now, for $n \in \N$, $z_n^*=f_n^*+g_n^*$ where
\[ f_n^*= \sum_{l=1}^{s_n} \lambda_{(n,l)} \sum_{i=1}^{w_n} \sum_{j=j_{2i-1}}^{j_{2i}-1} P^*_{[r_{(n,l,j)},r_{(n,l,j+1)})} \circ Q^*(x^*_{(n,l,j)}) \text{ and } \]
\[ g_n^* = \sum_{l=1}^{s_n} \lambda_{(n,l)} \sum_{i=1}^{w_n} \sum_{j=j_{2i}}^{j_{2i+1}-1} P^*_{[r_{(n,l,j)},r_{(n,l,j+1)})} \circ Q^*(x^*_{(n,l,j)})  . \]
(here, we put $j_{2i+1}-1=k_{(n,l)}$ if $i=w_n$). \\
After passing to a subsequence, we can assume that $f^*=w^* - \lim_{n \to \infty} f_n^*$ and $g^*= w^* - \lim_{n \to \infty} g_n^*$ exist. It also follows from the above representations of the $f_n^*$ and $g_n^*$ that $\|f^*\|$, $\|g^*\| \leq 1$. Since $z^*=f^*+g^*$, we can, without loss of generality, assume that $\operatorname{dist}(f^*, Y \oplus Q^*(X^*)) > 0$. \\
So, again, after replacing $z^*$ by $f^*$, and defining $r_j=q_{2j-1}$, for $j \in \N$, $\overline{r}=(r_j)_{j \in \N}$ and possibly adding again zero vectors, we can assume that, for each $n \in \N$, the vectors $z_n^*$ are of the form
\[ z_n^*= \sum_{l=1}^{s_n} \lambda_{(n,l)} \sum_{j=1}^{w_n} v^*_{(n,l,j)} \in S, \]
where
\[ v^*_{(n,l,j)} = \sum_{i=\tilde{m}_{(n,l,j)}}^{\tilde{m}_{(n,l,j+1)}-1} P^*_{[r_{(n,l,i)},r_{(n,l,i+1)})} \circ Q^*(x^*_{(n,l,i)}) \]
with $r_j=r_{(n,l,\tilde{m}_{(n,l,j)})} < r_{(n,l,\tilde{m}_{(n,l,j)}+1)} < \cdots < r_{(n,l,\tilde{m}_{(n,l,j+1)}-1)} < r_{(n,l,\tilde{m}_{(n,l,j+1)})}=r_{j+1}$, for $j \leq w_n$, $l \leq k_{(n,l)}$. \\
We define the following norm $\vvvert \cdot \vvvert_{\overline{r}}$ on $\vspan(e_j^* ; j \in \N)$. For $y^* \in \vspan(e_j^* ; j \in \N)$, we let $\vvvert y^* \vvvert_{\overline{r}}$ to be the infinimum of all expressions
\[ \sum_{i=1}^l \Big\| \sum_{j=1}^{k_i} \|x^*_{(i,j)}\|_{X^*} u^*_{r_{(i,j)}} \Big\|_{U^*}, \]
where the infinimum is taken over all representations of $y^*$ as
\[ y^* = \sum_{i=1}^l \sum_{j=1}^{k_i} P^*_{[r_{(i,j)},r_{(i,j+1)})} \circ Q^*(x^*_{(i,j)}) \]
where $l \in \N$, $r_{(i,1)} < \cdots < r_{(i,k_i)} < r_{(i,k_i+1)}$, and additionally, for some $m < n$ in $\N$,
\[ r_m=r_{(i,1)}, r_n=r_{(i,k_i+1)} \text{ and } \{ r_m, r_{m+1}, \cdots, r_n \} \subset \{ r_{(i,1)}, \cdots, r_{(i,k_i)}, r_{(i,k_i+1)} \} \]
and $x^*_{(i,j)} \in X^*$, $j \leq k_i$, $i \leq l$. \\
It follows that the norm $\vvvert \cdot \vvvert_{\overline{r}}$ dominates $\| \cdot \|_{Z^*}$ and that $\vvvert z_n^* \vvvert_{\overline{r}} \leq 1$ for all $n \in \N$. \\
For $n, j \in \N$, we put $v^*_{(n,j)}= \sum_{l=1}^{s_n} \lambda_{(n,l)} v^*_{(n,l,j)}$ if $j \leq w_n$ and $v_{(n,j)}^*=0$ if $j > w_n$. $v^*_{(n,j)}$ is an element of the finite-dimensional space $F_j = \vspan(e_i^* ; r_j \leq i < r_{j+1})$. We can therefore, after passing to a subsequence, assume that
\[ v_j^* = \vvvert \cdot \vvvert_{\overline{r}} - \liminfty v^*_{(n,j)} \in F_j \]
exists. It follows that
\[ \Big\vvvert \sum_{j=1}^m v_j^* \Big\vvvert_{\overline{r}} \leq 1 \text{ for every $m \in \N$, and } z^* = w^* - \lim\limits_{m \to \infty} \sum_{j=1}^m v_j^* . \]
Since $\sum_{j=1}^m v_j^*$ is not norm converging for $m \nearrow \infty$ (otherwise, $z^*$ would be an element of $Y$), it follows that we can find $\delta > 0$ (which can actually be chosen as close to $1$ as we wish) and an increasing sequence $(m_j)_{j=1}^{\infty}$, so that
\[ \Big\vvvert \sum_{i=m_j}^{m_{j+1}-1} v_i^* \Big\vvvert_{\overline{r}} \geq \delta . \]
We put $w_j^*= \sum_{i=m_j}^{m_{j+1}-1} v_i^*$, and we deduce that there is an increasing sequence $(k_n)_{n \in \N}$ in $\N$ and a sequence $(s_n)_{n \in \N}$ in $\N$ so that, for each $n \in \N$, $\Big\vvvert \sum_{j=1}^n w_j^* \Big\vvvert_{\overline{r}}$ can be written as
\begin{equation} \label{eq 3}
\Big\vvvert \sum_{j=1}^n w_j^* \Big\vvvert_{\overline{r}} = \sum_{l=1}^{s_n} \Big\| \sum_{j=1}^n \sum_{i=m_{(n,l,j)}}^{m_{(n,l,j+1)}-1} \|x^*_{(n,l,i)}\|_{X^*} u^*_{\tilde{r}_{(n,l,i)}} \Big\|_{U^*} \leq 1
\end{equation}
and, for every $j \in \N$, the vector $w_j^*$ has the representation
\begin{equation} \label{eq 4}
w_j^*= \sum_{l=1}^{s_n} \sum_{i=m_j}^{m_{j+1}-1} P^*_{[\tilde{r}_{(n,l,i)},\tilde{r}_{(n,l,i+1)})} \circ Q^*(x^*_{(n,l,i)}) 
\end{equation}
with
\begin{equation} \label{eq 5}
 \vvvert w_j^* \vvvert_{\overline{r}} \geq \delta,
\end{equation}
where, for $1 \leq l \leq s_n$ \\
$1=m_{(n,l,1)} < m_{(n,l,2)} < \cdots < m_{(n,l,n+1)}$ 
\begin{align*}
r_1= & \tilde{r}_{(n,l,1)} < \tilde{r}_{(n,l,2)} < \cdots < \tilde{r}_{(n,l,m_{(n,l,2)})}=r_{k_2} < \tilde{r}_{(n,l,m_{(n,l,2)}+1)} < \cdots < \tilde{r}_{(n,l,m_{(n,l,3)})}=r_{k_3} < \\
& \cdots < \tilde{r}_{(n,l,m_{(n,l,n)})}=r_{k_n} < \tilde{r}_{(n,l,m_{(n,l,n)}+1)} < \cdots < \tilde{r}_{(n,l,m_{(n,l,n+1)})}=r_{k_{n+1}}
\end{align*}
$x^*_{(n,l,i)} \in X^*$, for $l \leq s_n$ and $1 \leq i \leq m_{(n,l,n+1)}-1$, \\
(although $w_j^*$ does not depend on $n$, its specific representation to compute the norm of the sums of $w_j^*$ could depend on $n$). Note that \eqref{eq 4}, \eqref{eq 5}, and the definition of the norm $\vvvert . \vvvert_{\overline{r}}$ imply that, for $n \in \N$ and $j \in \N$
\begin{equation} \label{eq 6}
\sum_{l=1}^{s_n} \Big\| \sum_{i=m_{(n,l,j)}}^{m_{(n,l,j+1)}-1} \|x^*_{(n,l,i)}\|_{X^*} u^*_{\tilde{r}_{(n,l,i)}} \Big\|_{U^*} \geq  \vvvert w_j^* \vvvert_{\overline{r}} \geq \delta .
\end{equation}
This leads to the following result which provides a sufficient condition on $U$ and its dual to imply that $Z^*$ is the complemented sum of $Q^*(X^*)$ and $Y$.

\begin{lemma} \label{lemma 5.6}
Assume the basis $(u_j^*)_{j \in \N}$ of $U^*$ satisfies the following condition:
\begin{equation}
\begin{matrix}
\text{For every increasing sequence $\overline{M}=(M_n)_{n \in \N}$ in $\N$, there is a boundedly complete and }  \\ 
\text{$1$-unconditional basic sequence $(f_n)_{n \in \N}=(f_{(\overline{M},n)})_{n \in \N}$ (in some Banach space $F$) which} \\ 
\text{satisfies the following property:} \\
\text{Every normalized block sequence $(a_n^*)_{n \in \N}$ in $U^*$ with $\supp_{U^*}(a_n^*) \subset [M_n, M_{n+1})$, for all}  \\
\text{$n \in \N$ dominates $(f_n)_{n \in \N}$.} 
\end{matrix}
\end{equation}
Then $Z^*=Y \oplus Q^*(X^*)$.
\end{lemma}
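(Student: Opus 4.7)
The plan is a proof by contradiction building directly on the long reduction carried out before the statement. Suppose $z^* \in Z^*$ with $\operatorname{dist}(z^*, Q^*(X^*) \oplus Y) > 0$. After replacing $z^*$ by $z^*-v^*$ (resp.\ $\tilde z^*$) for $v^* \in Q^*(X^*) \oplus Y$ as in the Case~1 / Case~2 dichotomy preceding the lemma, we may assume that $z^*$ is the $w^*$-limit of partial sums $\sum_{j=1}^m w_j^*$ of vectors $(w_j^*) \subset Z^*$ satisfying \eqref{eq 3}--\eqref{eq 6}: there are $\delta > 0$, an increasing sequence $(r_j) \subset \N$ and landmark indices $k_1 < k_2 < \cdots$ such that $\vvvert \sum_{j=1}^n w_j^* \vvvert_{\overline r} \leq 1$, $\vvvert w_j^* \vvvert_{\overline r} \geq \delta$, and each $w_j^*$ admits a representation whose $l$-th slice is supported in $[r_{k_j}, r_{k_{j+1}})$.

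Next, feed this into the hypothesis: set $M_n := r_{k_n}$ and let $(f_n) \subset F$ be the boundedly complete $1$-unconditional basic sequence supplied by the hypothesis, with $K$ the uniform domination constant for normalized block sequences in $U^*$ supported in $[M_n, M_{n+1})$. For every $n$ and $l \leq s_n$, the vectors
\[ b^*_{(n,l,j)} := \sum_{i=m_{(n,l,j)}}^{m_{(n,l,j+1)}-1} \|x^*_{(n,l,i)}\|_{X^*}\, u^*_{\tilde r_{(n,l,i)}}, \qquad \lambda_{(n,l,j)} := \|b^*_{(n,l,j)}\|_{U^*}, \]
are supported in $[M_j, M_{j+1})$, so after normalization they form a block sequence dominating $(f_j)$ with constant $K$, i.e.\ $\bigl\| \sum_{j=1}^n \lambda_{(n,l,j)} f_j \bigr\|_F \leq K \bigl\| \sum_{j=1}^n b^*_{(n,l,j)} \bigr\|_{U^*}$. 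Summing this over $l \leq s_n$, using the triangle inequality on the left and \eqref{eq 3} on the right, and then \eqref{eq 6} together with the $1$-unconditionality of $(f_j)$, one obtains
\[ \delta \Bigl\| \sum_{j=1}^n f_j \Bigr\|_F \leq \Bigl\| \sum_{j=1}^n \mu_{(n,j)} f_j \Bigr\|_F \leq K, \qquad \mu_{(n,j)} := \sum_{l=1}^{s_n} \lambda_{(n,l,j)}. \]

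Thus the partial sums of $\sum_j f_j$ are uniformly bounded in $F$, and by bounded completeness of $(f_j)$ the series $\sum_j f_j$ converges, forcing $\|f_j\|_F \to 0$. This contradicts the fact that $(f_j)$ is a non-degenerate basic sequence: the hypothesis itself ensures $\|f_j\|_F$ is bounded below away from $0$, since otherwise a rescaled diagonal block sequence in $U^*$ supported in $[M_j, M_{j+1})$ would fail to dominate $(f_j)$ with any fixed constant. The main obstacle in making the argument airtight is this uniformity of the domination constant $K$ across the $s_n$ slices indexed by $l$: read as supplying a constant that depends only on $\overline M$ (the relevant interpretation for the Tsirelson-type norms on $U$ one has in mind), the hypothesis delivers this directly; otherwise it must be extracted by a standard diagonal argument inside $F$ before the summation over $l$ is performed.
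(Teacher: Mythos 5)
Your argument is correct and follows essentially the same route as the paper's: set $M_j = r_{k_j}$, apply the domination hypothesis to the normalized blocks of each slice $l$, sum over $l$ using the triangle inequality in $F$ together with \eqref{eq 3}, invoke \eqref{eq 6} and the $1$-unconditionality of $(f_j)$ to obtain $\delta \bigl\| \sum_{j=1}^n f_j \bigr\|_F \leq C$, and conclude by bounded completeness. The only slip is your justification that $\|f_j\|_F$ is bounded away from $0$: if $\|f_j\|_F \to 0$ then domination (an upper bound on $\|\sum c_j f_j\|_F$) becomes easier, not harder, so this cannot be extracted from the hypothesis in the way you suggest --- the paper simply takes $(f_j)$ normalized, which together with bounded completeness yields $\bigl\| \sum_{j=1}^n f_j \bigr\|_F \nearrow \infty$ directly.
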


\begin{proof}
Assume that our claim is wrong and that there is a $z^* \in B_{Z^*}$ which is not in $Q^*(X^*) \oplus Y$. By our previous arguments, we can assume that
\[ z^*= w^* - \liminfty \sum_{j=1}^n w_j^*, \]
where the sequence $(w_j^*)_{j \in \N}$ satisfies the above equations \eqref{eq 3}, \eqref{eq 4} and \eqref{eq 5}, for some $\delta > 0$, some sequences $(k_n)_{n \in \N}$, $(s_n)_{n \in \N}$, and families $(m_{(n,l,j)}; n \in \N, l \leq s_n, j \leq n+1)$, and $(\tilde{r}_{(n,l,j)} ; n \in \N, l \leq s_n, j \leq m_{(n,l,n+1)})$. \\
For $j \in \N$, we put $M_j = r_{k_j}=\tilde{r}_{(n,l,m_{(n,l,j)})}$ and let $(f_j)_{j \in \N}$ be a normalized $1$-unconditional basic sequence which is $C$-dominated by every normalized block sequence $(a_j^*)_{j \in \N}$ in $U^*$ with $\supp (a_j^*) \subset [M_j, M_{j+1})$, for each $j \in \N$. We deduce, for every $n \in \N$, that
\begin{align*}
1 & \geq \Big\vvvert \sum_{j=1}^n w_j^* \Big\vvvert_{\overline{r}} \\
&= \sum_{l=1}^{s_n} \Big\| \sum_{j=1}^n \sum_{i=m_{(n,l,j)}}^{m_{(n,l,j+1)}-1} \|x^*_{(n,l,i)}\|_{X^*} u_{\tilde{r}_{(n,l,i)}}^* \Big\|_{U^*} \text{ (By \eqref{eq 3})} \\
& \geq \frac{1}{C} \sum_{l=1}^{s_n} \Big\| \sum_{j=1}^n \Big\| \sum_{i=m_{(n,l,j)}}^{m_{(n,l,j+1)}-1} \|x^*_{(n,l,i)}\|_{X^*} u_{\tilde{r}_{(n,l,i)}}^* \Big\|_{U^*} f_j \Big\|_F \\
& \begin{pmatrix}
\text{For $l \in \{1, \cdots, s_n \}$, and $j \leq n$, put $a^*_{(l,j)}=\tilde{a}_{(l,j)} / \| \tilde{a}_{(l,j)} \|$ with} \\
\tilde{a}_{(l,j)} = \sum_{i=m_{(n,l,j)}}^{m_{(n,l,j+1)}-1} \| x^*_{(n,l,i)} \|_{X^*} u^*_{\tilde{r}_{(n,l,i)}}
\end{pmatrix} \\
& \geq \frac{1}{C} \Big\| \sum_{j=1}^n \sum_{l=1}^{s_n} \Big\| \sum_{i=m_{(n,l,j)}}^{m_{(n,l,j+1)}-1} \|x^*_{(n,l,i)}\|_{X^*} u_{\tilde{r}_{(n,l,i)}}^* \Big\|_{U^*} f_j \Big\|_F \\
& \geq \frac{1}{C} \Big\| \sum_{j=1}^n \vvvert w_j^* \vvvert_{\overline{r}} f_j \Big\|_F \geq \frac{\delta}{C} \Big\| \sum_{j=1}^n f_j \Big\|_F \text{ (By \eqref{eq 6} and unconditionality of $(f_j)_{j \in \N}$)}
\end{align*}
Since $(f_j)_{j \in \N}$ is boundedly complete, it follows that $\big\| \sum_{j=1}^n f_j \big\|_F \nearrow \infty$, if $n \nearrow \infty$, and thus we obtain a contradiction.
\end{proof}

Let us note that $T^*$ satisfies the hypothesis of Lemma \ref{lemma 5.6}. Indeed, by reflexivity and Lemma II.3 of \cite{CS}, we can chose $(f_n)_{n \in \N} = (u_{M_{n+1}-1}^*)_{n \in \N}$, where $(u_j^*)_{j \in \N}$ denotes the usual basis of $T^*$.

\begin{prop}
Assume that $U$ has the ``Tsirelson property", \ie, for some $C >0$, a normalized block of $n$ elements after $n$ is $C$ equivalent to the $\ell_1^n$-unit vector basis. \\
Then $Y$ is asymptotically $c_0$.
\end{prop}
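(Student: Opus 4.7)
The plan is to verify the block version of the asymptotic-$c_0$ property for $Y$ relative to its shrinking basis $(e_j^*)$, and then lift it to the game formulation by a standard perturbation argument. Throughout, let $C>0$ denote the Tsirelson constant of $U$, so that for every normalized block sequence $v_1<\cdots<v_n$ in $U$ with $\min\supp(v_1)\geq n$, one has $\|\sum \alpha_i v_i\|_U \geq C^{-1}\sum|\alpha_i|$.

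Fix $n\in\N$ and a normalized block sequence $y_1^*<\cdots<y_n^*$ in $Y$ with \emph{interval} supports $I_k:=\supp(y_k^*)$ satisfying $I_1<\cdots<I_n$ and $\min I_1\geq n$. To bound $\|\sum a_k y_k^*\|_{Z^*}$ I would use duality: for $z\in B_Z$ set $z_k=P_{I_k}(z)$; since $y_k^*$ is supported on $I_k$,
\[
\Bigl|\Bigl(\sum_k a_k y_k^*\Bigr)(z)\Bigr|=\Bigl|\sum_k a_k y_k^*(z_k)\Bigr|\leq \max_k|a_k|\sum_k\|z_k\|_Z,
\]
so the task reduces to uniformly bounding $\sum_k\|z_k\|_Z$ for $z\in B_Z$.

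For each $k$ and $\varepsilon>0$, Lemma~\ref{lemma 3} applied to $z_k$ produces intervals $J_1^k<\cdots<J_{s_k}^k\subset I_k$ such that
\[
\|z_k\|_Z \leq 2\|c_k\|_U+\tfrac{\varepsilon}{n},\qquad c_k:=\sum_{l=1}^{s_k}\Bigl\|\sum_{i\in J_l^k}z^{(i)}x_i\Bigr\|_X u_{\min J_l^k}\in U.
\]
Because the $J_l^k$ lie in the pairwise disjoint intervals $I_k$, their concatenation $J_1^1<\cdots<J_{s_1}^1<J_1^2<\cdots<J_{s_n}^n$ is an admissible cover for $z$ in the definition of $\|\cdot\|_Z$, whence $\|z\|_Z\geq \|\sum_k c_k\|_U$. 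The $c_k$ are consecutively supported blocks of $(u_j)$, and since $\min\supp(c_1)\geq\min I_1\geq n$, the Tsirelson property applied to $(c_k/\|c_k\|_U)_{k=1}^n$ yields $\sum_k\|c_k\|_U\leq C\|\sum_k c_k\|_U\leq C\|z\|_Z\leq C$. Thus $\sum_k\|z_k\|_Z\leq 2C+\varepsilon$, and letting $\varepsilon\to 0$ gives $\|\sum a_k y_k^*\|_{Z^*}\leq 2C\max_k|a_k|$.

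To reach the game formulation I would pick cofinite subspaces as tail subspaces $X_\ell=\overline{\vspan(e_j^*;j>N_\ell)}$ with $N_0=n$, and after each adversarial choice $x_\ell\in S_{X_\ell}$ take $N_\ell$ large enough that $x_\ell$ is within $\delta/2^\ell$ of a normalized block $\tilde b_\ell$ whose interval support lies in $(N_{\ell-1},N_\ell]$; this is possible since $(e_j^*)$ is shrinking in $Y$. The block estimate bounds $\|\sum a_k\tilde b_k\|$ by $2C(1+\delta)\max_k|a_k|$, the geometric-sum perturbation contributes at most $\delta\max_k|a_k|$, and one concludes that $Y$ is $D$-asymptotically $c_0$ for every $D>2C$. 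The main subtleties I foresee are the factor-of-$2$ loss inherited from Lemma~\ref{lemma 3} and the bookkeeping needed so that the cover blocks $c_k$ remain supported past position $n$; both are handled by restricting to blocks of interval support, which is legitimate for asymptotic structure thanks to the shrinkingness of $(e_j^*)$.
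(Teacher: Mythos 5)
Your argument is correct and follows essentially the same route as the paper's proof: split $z\in B_Z$ along the supports of the blocks, apply Lemma~\ref{lemma 3} to each piece (accepting the factor $2$), concatenate the resulting interval covers into an admissible cover for $z$, and invoke the Tsirelson property of $U$ on the $n$ resulting blocks supported past $n$ to get $\sum_k\|z_k\|_Z\leq 2C$, hence the $2C$ upper $\ell_\infty^n$-estimate; the reduction from the cofinite-subspace formulation to block sequences via shrinkingness of $(e_j^*)$ is exactly the paper's opening remark. The only cosmetic difference is that the paper does not need your interval-support restriction: taking $r_j=\min\supp(y_j)$ and $z_j=P_{[r_j,r_{j+1})}(z)$ already gives $y_j(z)=y_j(z_j)$ for any block sequence.
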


\begin{proof}
Since $(e_j^*)_{j \in \N}$ is a shrinking basis of $Y$, it is enough to show that, for every $n \in \N$, every normalized block sequence $(y_j)_{j=1}^n$ in $Y$, with $n \leq \min \supp_Y (y_1)$ is $2C$-equivalent to the $\ell_{\infty}^n$-unit vector basis. So let $\varepsilon > 0$, $(a_j)_{j=1}^n \subset [-1,1]$ and choose $z= \sum_{i \in \N} b_i e_i \in S_Z \cap c_{00}$, so that
\[ z \left( \sum_{j=1}^n a_j y_j \right) \geq \Big\| \sum_{j=1}^n a_j y_j \Big\| - \varepsilon . \]
Let $r_j= \min \supp(y_j)$, for $1 \leq j \leq n$, $r_{n+1}= \infty$ and put $z_j= \sum_{i \in [r_j, r_{j+1)}} b_i e_i$. By Lemma \ref{lemma 3}, we can find, for each $j \in \{1, \cdots, n \}$, intervals $r_j \leq I_{(j,1)} < I_{(j,2)} < \cdots < I_{(j,k_j)} < r_{j+1}$ so that
\[ \|z_j\| \leq 2 \Big\| \sum_{l=1}^{k_j} \Big\| \sum_{i \in I_{(j,l)}} b_i x_i \Big\|_X u_{\min I_{(j,l)}} \Big\|_U . \]
By stringing the intervals to one sequence, it follows from our condition on $U$ that
\begin{align*}
1 &= \|z\|_Z \\
& \geq \Big\| \sum_{j=1}^n \sum_{l=1}^{k_j} \Big\| \sum_{i \in I_{(j,l)}} b_i x_i \Big\|_X u_{\min I_{(j,l)}} \Big\|_U \\
& \geq \frac{1}{C} \sum_{j=1}^n \Big\| \sum_{l=1}^{k_j} \Big\| \sum_{i \in I_{(j,l)}} b_i x_i \Big\|_X u_{\min I_{(j,l)}} \Big\|_U \\
& \geq \frac{1}{2C} \sum_{j=1}^n \|z_j\|_Z ,
\end{align*}
and thus
\[ \Big\| \sum_{j=1}^n a_j y_j \Big\|_Y \leq z \left( \sum_{j=1}^n a_j y_j \right) + \varepsilon = \sum_{j=1}^n a_j z_j(y_j)+\varepsilon \leq \sum_{j=1}^n \|z_j\|_Z + \varepsilon \leq 2C + \varepsilon, \]
which proves our claim, since $\varepsilon > 0$ was arbitrary.
\end{proof}

\begin{corollary}
If $U$ is $T$, the dual of the original Tsirelson space, as described in \cite{FJ}, with its usual $1$-unconditional basis, then $Z_X$ is asymptotic-$c_0$ and $Z_X^{**}$ is the complemented sum of $Z_X$ and $Q^*(X^*)$.
\end{corollary}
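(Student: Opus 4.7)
The plan is to apply directly the two immediately preceding results. The first assertion — that $Y$ is asymptotic-$c_0$ — follows from the preceding Proposition once one verifies that $T$ has the \emph{Tsirelson property} stated there. But this is precisely the defining feature of the Figiel--Johnson space: any normalized block sequence $(x_j)_{j=1}^n$ in $T$ with $n \leq \min\supp(x_1)$ is $2$-equivalent to the unit basis of $\ell_1^n$, which is immediate from the implicit formula for the $T$-norm \cite{FJ}.

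For the second assertion — that $Z^* = Y \oplus Q^*(X^*)$ — I would apply the preceding Lemma to $U^* = T^*$. What remains is to exhibit, for each strictly increasing sequence $\overline{M}=(M_n)$ in $\N$, a $1$-unconditional and boundedly complete basic sequence $(f_n)$ which is dominated by every normalized block sequence $(a_n^*)$ in $T^*$ with $\supp(a_n^*) \subset [M_n, M_{n+1})$. My proposal is to take
\[
f_n := t_{M_{n+1}-1}^*,
\]
the subsequence of the biorthogonal basis of $T^*$ obtained by picking the largest admissible index in each block interval. Reflexivity of $T$ makes $(t_n^*)$ a boundedly complete $1$-unconditional basis of $T^*$, and both properties pass to the subsequence $(f_n)$.

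The main step, which I expect to be the crux of the proof, is the domination $(a_n^*) \succ (f_n)$. My plan is a direct duality argument. Given scalars $(b_n)$, choose $x = \sum_j x_j t_j \in B_T$ almost attaining $\|\sum_n b_n t_{M_{n+1}-1}^*\|_{T^*}$ in the duality pairing; for each $n$, pick $z_n \in B_T$ with $\supp(z_n) \subset [M_n, M_{n+1})$ and $a_n^*(z_n) \geq 1-\varepsilon$. Setting $z := \sum_n \sgn(b_n) \,|x_{M_{n+1}-1}|\, z_n$, one computes directly
\[
\Bigl( \sum_n b_n a_n^* \Bigr)(z) \geq (1-\varepsilon) \sum_n |b_n|\,|x_{M_{n+1}-1}| \geq (1-\varepsilon)\,\Bigl| \sum_n b_n x_{M_{n+1}-1} \Bigr|,
\]
so the entire argument reduces to bounding $\|z\|_T$ by a universal constant. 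Here I would invoke two classical properties of $T$: the \emph{subsequence property}, stating that $(z_n)$ is $K$-equivalent in $T$ to $(t_{k_n})$ with $k_n = \min \supp(z_n)$; and the \emph{monotonicity of the $T$-norm under upward index shifts}, which gives $\|\sum_n c_n t_{k_n}\|_T \leq \|\sum_n c_n t_{M_{n+1}-1}\|_T$ since $k_n \leq M_{n+1}-1$. Applying these with $c_n = |x_{M_{n+1}-1}|$, and then using $1$-unconditionality of $(t_j)$ to absorb the absolute values and the restriction to the single-coordinate set $\{M_{n+1}-1 : n \in \N\}$, yields $\|z\|_T \leq K \|x\|_T \leq K$. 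Letting $\varepsilon \to 0$ and taking the supremum over $x$ gives $\|\sum_n b_n a_n^*\|_{T^*} \geq K^{-1} \|\sum_n b_n f_n\|_{T^*}$, and the preceding Lemma then delivers $Z^* = Y \oplus Q^*(X^*)$.

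The main obstacle is really this last domination: the point is that one must pick the \emph{largest} index in each interval $[M_n, M_{n+1})$ so that the subsequence property of blocks in $T$ and the monotonicity of the $T$-norm under upward index shifts push the inequalities in the same direction; the seemingly more natural choice $f_n = t_{M_n}^*$ would force them in opposite directions and would fail.
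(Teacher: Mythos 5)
Your proof is correct and follows the same route as the paper, which states this corollary without further argument as an immediate consequence of the preceding Proposition (for the asymptotic-$c_0$ part) and Lemma (for the decomposition of $Z^*$). Your verification that $U^*=T^*$ satisfies the hypothesis of that Lemma --- taking $f_n = t^*_{M_{n+1}-1}$ and combining the classical blocking principle for normalized block bases of $T$ with the rightward monotonicity of the $T$-norm on subsequences of its basis --- is sound and supplies exactly the detail the paper leaves implicit.
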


Define $W= \ker(Q) = \{z \in Z ; Q(z)=0 \}$. Since $Q^*(X^*)=W^{\perp}$, we deduce this final corollary by noticing that, in this case, $Y$ is isomorphic to $Z^* / W^{\perp}$, which is itself isomorphic to $W^*$.

\begin{corollary}
Assume that $Z^*$ is the complemented sum of $Y$ and $Q^*(X^*)$.  Then $Y$ is isomorphic to $W^*$.
\end{corollary}

\begin{thank}
The author would like to thank Gilles Lancien for useful conversations and comments, Florent Baudier for drawing our attention to Causey's paper \cite{3.5} and is very grateful to Thomas Schlumprecht for sending us his construction used for Theorem \ref{thm schlumprecht}, detailed below. The author also thanks the referee for her/his suggestions that helped improve this article.
\end{thank}

\bibliographystyle{plain}
\bibliography{biblio}

\end{document}